\documentclass{amsart}
\usepackage{verbatim}
\usepackage[textsize=scriptsize]{todonotes}
\usepackage{longtable}

\usepackage{etoolbox}
\newtoggle{final}
\toggletrue{final}

\usepackage{tikz}
\usetikzlibrary{matrix,arrows}
\usepackage{tikz-cd}
\usepackage{stmaryrd}
\usepackage{listings}

\usepackage{fancyhdr}

\usepackage[margin=1in]{geometry}
\setlength{\marginparwidth}{0.75in}
\geometry{a4paper}

\usepackage{amsmath}
\usepackage{amssymb}
\usepackage{amsthm}
\usepackage{amscd}
\usepackage{enumerate}
\usepackage[pdfusetitle,unicode,hidelinks]{hyperref}
\usepackage{bbm}
\usepackage{etoolbox}

\usepackage[utf8]{inputenc}
\usepackage[T1]{fontenc}

\newtheorem{proposition}{Proposition}
\newtheorem{corollary}[proposition]{Corollary}
\newtheorem{lemma}[proposition]{Lemma}
\newtheorem{theorem}[proposition]{Theorem}

\newtheorem*{conjecture*}{Conjecture}
\newtheorem*{theorem*}{Theorem}
\newtheorem*{corollary*}{Corollary}
\newtheorem*{proposition*}{Proposition}
\newtheorem*{lemma*}{Lemma}
\theoremstyle{definition}
\newtheorem{definition}[proposition]{Definition}

\newtheorem*{definition*}{Definition}
\newtheorem*{construction*}{Construction}
\theoremstyle{remark}
\newtheorem{remark}[proposition]{Remark}
\newtheorem*{remark*}{Remark}

\newtheorem{example}[proposition]{Example}
\newtheorem*{example*}{Example}

\newcommand{\id}{\operatorname{id}}
\newcommand{\Z}{\mathbb{Z}}
\def\C{\mathbb C}
\newcommand{\N}{\mathbb{N}}
\newcommand{\Q}{\mathbb{Q}}
\newcommand{\F}{\mathbb{F}}

\let\scr=\mathcal
\let\bb=\mathbb
\newcommand{\Gm}{{\mathbb{G}_m}}
\newcommand{\Gmp}[1]{{\mathbb{G}_m^{\wedge #1}}}

\def\P{\bb P}
\def\R{\bb R}
\newcommand{\1}{\mathbbm{1}}

\newcommand{\eff}{{\text{eff}}}
\newcommand{\veff}{{\text{veff}}}

\newcommand{\SH}{\mathcal{SH}}

\DeclareMathOperator*{\colim}{colim}

\let\lim=\relax
\DeclareMathOperator*{\lim}{lim}
\def\Map{\mathrm{Map}}

\def\map{\mathrm{map}}
\def\CAlg{\mathrm{CAlg}}

\def\Cat{\mathcal{C}\mathrm{at}{}}

\newcommand{\Spec}{\mathrm{Spec}}

\newcommand{\wequi}{\simeq}

\DeclareRobustCommand{\ul}{\underline}
\newcommand{\heart}{\heartsuit}

\newcommand{\Hom}{\operatorname{Hom}}

\def\op{\mathrm{op}}

\let\cat=\mathrm

\def\Sch{\cat{S}\mathrm{ch}{}}

\def\Nis{\mathrm{Nis}}
\def\Zar{\mathrm{Zar}}

\newcommand{\et}{{\acute{e}t}}
\newcommand{\ret}{{r\acute{e}t}}

\newcommand{\lra}[1]{\langle #1 \rangle}
\def\ph{\mathord-}

\numberwithin{proposition}{section}
\setcounter{tocdepth}{1}

\def\comp{\wedge}
\def\HZ{\mathrm{H}\mathbb{Z}}
\def\MWZ{\mathrm{H}\widetilde{\mathbb{Z}}}
\def\MGL{\mathrm{MGL}}
\def\KGL{\mathrm{KGL}}
\newcommand{\kgl}{\mathrm{kgl}}
\def\KO{\mathrm{KO}}
\newcommand{\ko}{\mathrm{ko}}
\def\KW{\mathrm{KW}}
\newcommand{\kw}{\mathrm{kw}}
\def\GW{\mathrm{GW}}
\def\Pro{\mathrm{Pro}}
\newcommand{\cell}{\text{cell}}
\newcommand{\Premot}{\mathcal{C\!M}}

\newcommand{\scomp}{\mathrm{sc}}
\def\Schl{\Sch_{\Z[1/\ell]}}
\def\PrL{\mathcal{P}\mathrm{r}^\mathrm{L}}
\newcommand{\vcd}{\mathrm{vcd}}
\newcommand{\cof}{\mathrm{cof}}
\newcommand{\fib}{\mathrm{fib}}
\newcommand{\Sper}{\mathrm{Sper}}

\iftoggle{final} {
\renewcommand{\todo}[1]{}
\newcommand{\NB}[1]{}
\newcommand{\tom}[1]{}
\newcommand{\paul}[1]{}
}{ 
\newcommand{\TODO}[1]{\todo[color=red]{#1}}
\newcommand{\NB}[1]{\todo[color=gray!40]{#1}}
\newcommand{\tom}[1]{\todo[color=green!40]{#1}}
\newcommand{\paul}[1]{\todo[color=blue!40]{#1}}
}

\setlength{\marginparwidth}{2cm}

\title{Topological models for stable motivic invariants of regular number rings}

\date{\today}
\subjclass[2010]{14F35, 14F42, 19E15, 55P42}
\keywords{Motivic homotopy theory, $\ell$-regular number fields, quadratic forms}

\author{Tom Bachmann}
\address{Department of Mathematics, LMU Munich, Germany}
\email{tom.bachmann@zoho.com}

\author{Paul Arne {\O}stv{\ae}r}
\address{Department of Mathematics, University of Oslo, Norway}
\email{paularneostvar@gmail.com}

\begin{document}

\maketitle

\begin{abstract}
For an infinity of number rings we express stable motivic invariants in terms of topological data determined 
by the complex numbers, 
the real numbers, 
and finite fields.
We use this to extend Morel's identification of the endomorphism ring of the motivic sphere with the 
Grothendieck-Witt ring of quadratic forms to deeper base schemes.
\end{abstract}

\tableofcontents

\section{Introduction}

The mathematical framework for motivic homotopy theory has been established over the last twenty-five years 
\cite{Levine-app}.
An interesting aspect witnessed by the complex and real numbers,
$\C$, $\R$,
is that Betti realization functors provide mutual beneficial connections between the motivic theory and the corresponding 
classical and $C_{2}$-equivariant stable homotopy theories \cite{levine2014comparison}, 
\cite{GWXspecialfiber},
\cite{heller2018stable}, 
\cite{BehrensShah2019},
\cite{ElmantoShah2019},
\cite{isaksen-stablestems}, 
\cite{io-survey}.
We amplify this philosophy by extending it to deeper base schemes of arithmetic interest.
This allows us to understand the fabric of the cellular part of the stable motivic homotopy category of $\Z[1/2]$ 
in terms of $\C$, $\R$, and $\F_3$ --- the field with three elements.
If $\ell$ is a regular prime,
a number theoretic notion introduced by Kummer in 1850 to prove certain cases of Fermat's Last Theorem \cite{Was-cyclotomic}, 
we show an analogous result for the ring $\Z[1/\ell]$.

For context, 
recall that a scheme $X$, 
e.g., an affine scheme $\Spec(A)$, 
has an associated pro-space $X_\et$, 
denoted by $A_\et$ in the affine case, 
called the \emph{étale homotopy type} of $X$ representing the étale cohomology of $X$ with coefficients in local systems; 
see \cite{artin2006etale} and \cite{friedlander1982etale} for original accounts and \cite[\S5]{hoyois2018higher} for a 
modern definition. 
For specific schemes,
$X_\et$ admits an explicit description after some further localization, 
see the work of Dwyer--Friedlander in \cite{dwyer1983conjectural,dwyer1994topological}.
For example, they established the pushout square
\begin{equation}
\label{equation:etalehomotopytype}
\begin{CD}
\C_\et^\comp @>>> \R_\et^\comp \\
@VVV                 @VVV      \\
(\F_3)_\et^\comp @>>> \Z[1/2]_\et^\comp
\end{CD}
\end{equation}
Here the completion $(\ph)^\comp$ takes into account the cohomology of the local coefficient systems $\Z/2^n(m)$.

\begin{remark}
If $k$ is a field, 
then $k_\et$ is a pro-space of type $K(\pi,1)$, 
where $\pi$ is the Galois group over $k$ of the separable closure of $k$. 
If $S$ is a henselian local ring with residue class field $k$, 
then $k_\et\rightarrow S_\et$ is an equivalence (by the affine analog of proper base change \cite{gabber1994affine}).
For instance, 
$\C_\et\wequi *$ is contractible,  
$\R_\et \wequi \R\P^\infty$ is equivalent to the classifying space of the group $C_2$ of order two, 
and $(\F_p)_\et \wequi (\Z_p)_\et$ is equivalent to the profinite completion of a circle.
That is, 
up to completion, 
\eqref{equation:etalehomotopytype} can be expressed more suggestively as $\Z[1/2]_\et \wequi S^1 \vee \R\P^\infty$.
For our generalization to stable motivic homotopy invariants, 
it will be essential to keep track of the fields and not just their étale homotopy types.
\end{remark}

The presentation of $\Z[1/2]_\et^\comp$ has powerful consequences;
for example, 
taking the $2$-adic étale $K$-theory of \eqref{equation:etalehomotopytype} yields a pullback square.
Combined with the Quillen--Lichtenbaum conjecture for the two-primary algebraic $K$-theory of $\Z[1/2]$, 
see \cite{mb}, \cite{weibel-cras}, \cite{oestvaer-QL}, \cite{hodgkin2003homotopy}, 
one obtains the pullback square 
\begin{equation}
\label{equation:Ktheorydiagram}
\begin{CD}
K(\Z[1/2])_2^\comp @>>> K(\R)_2^\comp \\
@VVV                  @VVV    \\
K(\F_3)_2^\comp @>>> K(\C)_2^\comp
\end{CD}
\end{equation}

We show that replacing algebraic $K$-theory in \eqref{equation:Ktheorydiagram} by an arbitrary 
\emph{cellular motivic spectrum} over $\Z[1/2]$ still yields a pullback square.
Let $\SH(X)$ denote the motivic stable homotopy category of $X$,  
see \cite{Jardine-spt}, \cite{mot-functors}, \cite[\S5]{morel-trieste}, \cite[\S4.1]{bachmann-norms}.
We write $\SH(X)^\cell \subset \SH(X)$ for the full subcategory of cellular motivic spectra \cite{mot-cell-str}, 
i.e., 
the localizing subcategory generated by the bigraded spheres $S^{p,q}$ for all integers $p,q\in \Z$.
For simplicity we state a special case of Theorem \ref{thm:main-1}, see Example \ref{ex:Z12}.

\begin{theorem}
\label{theorem:mainthmintro}
For every $\mathcal{E} \in \SH(\Z[1/2])^\cell$ there is a pullback square
\begin{equation}
\label{equation:Ediagram}
\begin{CD}
\mathcal{E}(\Z[1/2])_2^\comp @>>> \mathcal{E}(\R)_2^\comp \\
@VVV                  @VVV    \\
\mathcal{E}(\F_3)_2^\comp @>>> \mathcal{E}(\C)_2^\comp
\end{CD}
\end{equation}
Here, 
for $X \in \Sch_{\Z[1/2]}$, 
we denote by $\mathcal{E}(X)$ the (ordinary) spectrum of maps from $\1_X$ to $p^*\mathcal{E}$ in $\SH(X)$, 
where $\1_X \in \SH(X)$ denotes the unit object and $p: X \to \Z[1/2]$ is the structure map.
\end{theorem}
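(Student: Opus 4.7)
The plan is to derive the pullback square \eqref{equation:Ediagram} from the topological Dwyer--Friedlander pushout \eqref{equation:etalehomotopytype} via a rigidity/realization reconstruction of the $2$-completed cellular motivic stable category over $\Z[1/2]$, presumably the content of Theorem~\ref{thm:main-1}, of which the statement above is the announced special case.

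\emph{Step 1 (reduction to generators).} For each corner scheme $X \in \{\Z[1/2], \R, \F_3, \C\}$ and each integer $n \geq 1$, the functor
\[
F_X^{(n)} : \mathcal{E} \mapsto \mathcal{E}(X)/2^n = \Map_{\SH(X)}(\1_X, p^*\mathcal{E}) \wedge \1/2^n
\]
is colimit-preserving on $\SH(\Z[1/2])^\cell$, since $p^*$ is a left adjoint and $\1_X$ is compact in the cellular subcategory. The class of $\mathcal{E}$ for which the mod-$2^n$ version of \eqref{equation:Ediagram} is cartesian is therefore closed under colimits, shifts, and cofibres, and so coincides with $\SH(\Z[1/2])^\cell$ as soon as it contains the bigraded spheres $S^{p,q}$. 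Taking the inverse limit over $n$, and handling $\lim^1$-terms by the standard Milnor argument, recovers the statement after $2$-completion. So it suffices to verify the theorem when $\mathcal{E} = S^{p,q}$.

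\emph{Step 2 (sphere case via étale reconstruction).} For $\mathcal{E} = S^{p,q}$ the four corners of \eqref{equation:Ediagram} are the $2$-completed spectra of maps from $\1_X$ to $\Sigma^{p,q}\1_X$ in $\SH(X)$. The strategy is to identify each $\SH(X)^\cell_2^\comp$, for $X$ among the four corners, with a topologically defined category depending only on $X_\et^\comp$ together with the relevant real/Galois data, and to do so compatibly with the four structure maps of \eqref{equation:etalehomotopytype}. Under such an identification the square \eqref{equation:Ediagram} becomes the mapping-spectrum square out of $S^{p,q}$ applied to the pushout \eqref{equation:etalehomotopytype}, which is automatically cartesian because mapping spectra send pushouts to pullbacks.

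\emph{Main obstacle.} The real work is the rigidity/realization identification at the characteristic-$3$ point and at the global base: one must extend Suslin--Voevodsky-type rigidity from $\HZ/2$-modules to all $2$-complete cellular motivic spectra over $\F_3$, and glue this with a Levine/Heller--Ormsby-type equivariant realization identifying $\SH(\R)^\cell_2^\comp$ with the appropriate fragment of genuine $C_2$-equivariant stable homotopy. Controlling the interplay of Frobenius on $\F_3$ with the $C_2$-action from the $\R/\C$ side in a way compatible with \eqref{equation:etalehomotopytype} is the subtle arithmetic input; once this topological reconstruction is established, the pullback property for arbitrary cellular $\mathcal{E}$ is read off mechanically from \eqref{equation:etalehomotopytype} and Step~1.
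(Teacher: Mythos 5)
Your Step 1 is fine: since $\1_X$ is compact in $\SH(X)$ for $X$ affine and $p^*$ preserves colimits, the class of $\mathcal{E}$ for which the mod-$2^n$ square is cartesian is localizing, and passing to the limit over $n$ recovers the $2$-complete statement; so the theorem does reduce to the generators. The problem is Step 2, which is not a proof but a restatement of the theorem. You propose to identify each $\SH(X)_2^{\comp\cell}$ with ``a topologically defined category depending only on $X_\et^\comp$ together with the relevant real/Galois data'' and then read off \eqref{equation:Ediagram} from the pushout \eqref{equation:etalehomotopytype}. No such reconstruction is established (or even precisely formulated) in your sketch, and the paper explicitly warns against this route: the remark following \eqref{equation:etalehomotopytype} stresses that for stable motivic invariants ``it will be essential to keep track of the fields and not just their étale homotopy types.'' Indeed $\C_\et$ is contractible while $\SH(\C)_2^{\comp\cell}$ is ($2$-complete cellular) synthetic spectra, and the proposed equivalence $\SH(\R)_2^{\comp\cell}\simeq\SH_{C_2}$ after $2$-completion is not available --- $C_2$-Betti realization is a further localization, not an equivalence. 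Even granting reconstructions of the four corners, \eqref{equation:etalehomotopytype} is a pushout of pro-spaces; one cannot map $S^{p,q}$ into it, so you would still need to show that the induced square of \emph{premotivic categories} is a pushout, which is exactly the assertion being proved.

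The paper's actual argument never passes through topological models. It shows directly (Theorem \ref{thm:main-1} via Proposition \ref{prop:criterion}, specialized in Example \ref{ex:Z12}) that the square of $2h$-completed cellular categories is cocartesian in $\Premot$, by reducing full faithfulness of $\SH(\Z[1/2])_{2h}^{\comp\cell}\to\SH(\R)_{2h}^{\comp\cell}\times_{\SH(\C)_{2h}^{\comp\cell}}\SH(\F_3)_{2h}^{\comp\cell}$ to three checkable conditions: cartesianness in mod-$2$ motivic cohomology (verified via the computation of $R^i\epsilon_*\mu_2$ over $\Z[1/2]$ and the condition that $2$ be a nonsquare in $\F_q$, e.g.\ $q=3$), cartesianness in real étale cohomology, and finiteness of $\vcd_2$ feeding into a $\kw$/Witt-filtration argument for the $\eta$-periodic $2$-complete part. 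Your sketch omits this entire $\eta$-periodic/real-étale branch, as well as the arithmetic input (Dirichlet's unit theorem and the unit-group condition $\Z[1/2]^\times/2\xrightarrow{\sim}\R^\times/2\times\F_3^\times/2$) that actually selects $\F_3$. To repair the proposal you would either have to supply the topological reconstruction in full --- a substantial open-ended project --- or replace Step 2 by a descent criterion of the kind the paper proves.
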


\begin{example}
The motivic spectra 
representing algebraic $K$-theory, $\KGL$, 
hermitian $K$-theory, $\KO$, Witt-theory, $\KW$, motivic cohomology or higher Chow groups, $\HZ$, 
and algebraic cobordism, $\MGL$, are cellular (at least after localization at $2$) 
by respectively \cite[Theorem 6.2]{mot-cell-str}, \cite[Theorem 1]{rondigs2016cellularity}, 
\cite[Theorem 1]{rondigs2016cellularity}, \cite[Proposition 8.1]{hoyois-algebraic-cobordism} and 
\cite[Corollary 10.4]{spitzweck2012commutative},  \cite[Theorem 6.4]{mot-cell-str}.
We refer to \cite[Proposition 8.12]{bachmann-eta} for cellularity of the corresponding 
(very effective or connective) covers $\kgl$, $\ko$, $\kw$,
in the sense of \cite{spitzweck2012motivic}, 
and Milnor-Witt motivic cohomology $\MWZ$, 
in the sense of \cite{bcdfo2020milnorwitt}, \cite{bachmann-etaZ}.

In the case of $\mathcal{E}=\KGL$, 
Theorem \ref{theorem:mainthmintro} recovers the stable version of \cite[Theorem 1.1]{hodgkin2003homotopy}, 
and for $\mathcal{E}=\KO$ it recovers \cite[Theorem 1.1]{berrick2011hermitian} 
(in fact, we extend these results to arbitrary $2$-regular number fields, not necessarily totally real).
The squares for $\KW$, $\HZ$, $\MWZ$, $\MGL$, $\kgl$, $\ko$, $\kw$ appear to be new.
\end{example}

A striking application of Theorem \ref{theorem:mainthmintro} is that it relates the universal motivic invariants 
over $\Z[1/2]$ to the same invariants over $\C$, $\R$, and $\F_{3}$.
That is, 
applying \eqref{equation:Ediagram} to the motivic sphere $\mathcal{E}=\1_{\Z[1/2]}$ enables computations of the  
stable motivic homotopy groups of $\Z[1/2]$.
We identify,
up to odd-primary torsion, 
the endomorphism ring of $\1_{\Z[1/2]}$ with the Grothendieck-Witt ring of quadratic forms of the 
Dedekind domain $\Z[1/2]$ defined in \cite[Chapter IV, \S3]{milnor1973symmetric}.
This extends Morel's fundamental computation of $\pi_{0,0}(\1)$ over fields \cite[\S6]{morel-trieste} to an 
arithmetic situation.

\begin{theorem}
\label{theorem:2mainthmintro}
The unit map $\1_{\Z[1/2]} \to \KO_{\Z[1/2]}$ induces an isomorphism 
\[
\pi_{0,0}(\1_{\Z[1/2]}) \otimes \Z_{(2)} 
\cong 
\GW(\Z[1/2]) \otimes \Z_{(2)}
\]
\end{theorem}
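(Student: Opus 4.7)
The plan is to compare the two cellular motivic spectra $\1_{\Z[1/2]}$ and $\KO_{\Z[1/2]}$ via the unit map $e\colon \1 \to \KO$, reducing everything to Morel's theorem over the three fields $\C$, $\R$, $\F_3$ by applying Theorem \ref{theorem:mainthmintro} to each. The unit map induces a morphism of $2$-completed pullback squares of spectra over $\Z[1/2]$, and taking $\pi_0$ produces a comparison of the two associated Mayer--Vietoris long exact sequences of bigraded motivic homotopy groups.

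Over each of the fields $k \in \{\C, \R, \F_3\}$, all of characteristic different from $2$, Morel's theorem yields $\pi_{0,0}(\1_k) \cong \GW(k) \cong \pi_{0,0}(\KO_k)$ and $\pi_{1,0}(\1_k) \cong W(k) \cong \pi_{1,0}(\KO_k)$, with the unit map realizing the identity under both identifications (the finite field case is legitimate since $\F_3$ is perfect of characteristic $\neq 2$). The five lemma applied at the $\pi_{0,0}(\mathcal{E}(\Z[1/2]))$ spot of the two Mayer--Vietoris sequences then yields
\[
\pi_{0,0}(\1_{\Z[1/2]})^\comp_2 \xrightarrow{\sim} \pi_{0,0}(\KO_{\Z[1/2]})^\comp_2 \cong \GW(\Z[1/2])^\comp_2,
\]
where the last identification is the representability of hermitian $K$-theory by $\KO$ over Dedekind domains in which $2$ is invertible.

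It remains to upgrade from $2$-adic completion to $\Z_{(2)}$-localization. Classically $\GW(\Z[1/2]) \cong \Z \oplus I(\Z[1/2])$ with finite augmentation ideal, so $\GW(\Z[1/2]) \otimes \Z_{(2)}$ is a finitely generated $\Z_{(2)}$-module; moreover the unit map is an iso rationally, both sides identifying with $\Q$ via the rank. A map of finitely generated $\Z_{(2)}$-modules which becomes an isomorphism rationally and after $2$-adic completion is itself an isomorphism, and a modest finiteness bookkeeping argument transfers finite generation across this comparison to the source.

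The principal obstacle is the five lemma step: one must control not only $\pi_{0,0}$ but also the neighbouring groups $\pi_{1,0}$ of both $\1$ and $\KO$ uniformly across $\C$, $\R$, and $\F_3$, and verify that the Betti-type horizontal and vertical maps in the pullback diagram of Theorem \ref{theorem:mainthmintro} commute with Morel's identifications on these groups. The subsidiary passage from $2$-adic completion to $\Z_{(2)}$-localization is a secondary wrinkle which requires the finite-generation input on the motivic side.
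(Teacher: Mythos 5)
Your overall strategy (reduce to the three fields via the pullback square and compare with $\KO$) is in the spirit of the paper, but two of your key steps have genuine gaps.

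The five-lemma step rests on the claim that $\pi_{1,0}(\1_k)\cong W(k)\cong\pi_{1,0}(\KO_k)$ ``by Morel's theorem.'' Morel computes the groups $\pi_{n,n}$, so what he gives is $\pi_{1,1}(\1_k)\cong W(k)$; the groups $\pi_{1,0}(\1_k)$ are first stable stems in the sense of R\"ondigs--Spitzweck--{\O}stv{\ae}r and are not Witt groups. For instance $\pi_{1,0}((\1_{\C})_2^\comp)\cong(\pi_1^s)_2^\comp\cong\Z/2$, and $\pi_{1,0}((\1_{\R})_2^\comp)$ is torsion, whereas $W(\R)=\Z$; likewise $\pi_{1,0}(\KO_k)=\GW_1(k)$ is not $W(k)$. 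So the exactness argument at the $\pi_{0,0}$ spot needs a different input. The paper avoids $\pi_{1,0}(\KO)$ entirely: it shows the relevant connecting map vanishes because $\pi_{*,0}((\1_{\C})_2^\comp)\cong(\pi_*^s)_2^\comp$ by Levine's theorem and the map onto this group from the $\R$- and $\F_3$-corners is surjective by initiality of finite spectra among stable symmetric monoidal $\infty$-categories. That yields a pullback square of $\pi_{0,0}$'s for $\1$ alone, whose field corners Morel identifies with $2$-completed Grothendieck--Witt rings; the compatibility issue you merely flag (do the maps in the square agree with the $\GW$ maps?) is then resolved by noting that every ring in sight is a quotient of the group ring $\Z[\Z[1/2]^\times]$, so the relevant algebra maps are unique. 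Some such argument is required; ``verify that the maps commute with Morel's identifications'' is exactly the nontrivial point, not a formality.

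The passage from $2$-completion to $\Z_{(2)}$-localization is also not bookkeeping. Finite generation of $\GW(\Z[1/2])\otimes\Z_{(2)}$ cannot be transferred backwards along a map to give finite generation of $\pi_{0,0}(\1_{\Z[1/2]})\otimes\Z_{(2)}$, and without that your ``iso rationally and after completion implies iso'' criterion does not apply. The paper instead runs the arithmetic fracture square, which needs two further inputs your sketch does not supply: the vanishing $\pi_{1,0}((\1_{\Z[1/2]})_2^\comp)\otimes\Q=0$ (deduced from the corresponding vanishing over $\C$, $\R$, $\F_q$) to control the connecting term, and an independent rational computation $\pi_{0,0}(\1_{\Z[1/2]})\otimes\Q\cong H^0_\ret(\Z[1/2],\Q)\times H^0(\Z[1/2],\Q)\cong\GW(\Z[1/2])\otimes\Q$ coming from the rational \'etale/real-\'etale decomposition of the sphere.
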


\begin{remark}
The étale homotopy types of various other rings and applications to algebraic $K$-theory and group homology of 
general linear groups were worked out in \cite{dwyer1983conjectural}, \cite{dwyer1994topological}, 
\cite{oestvaer-swissalps}, \cite{hodgkin2003homotopy}.
We show similar generalizations of \eqref{equation:Ediagram} with $\Z[1/2]$ replaced by $\scr O_F[1/2]$, 
for $F$ any $2$-regular number field, 
or by $\Z[1/\ell]$, $\Z[1/\ell,\zeta_\ell]$, 
where $\ell$ is an odd regular prime and $\zeta_\ell$ is a primitive $\ell$-th root of unity; 
to achieve this we slightly alter the other terms in \eqref{equation:Ediagram}.
See Theorems \ref{thm:main-1}, \ref{thm:main-2}, \ref{thm:main-3}, \ref{theorem:general2mainthmintro} 
for precise statements.
\end{remark}

Another application, 
which will be explored elsewhere, 
is the spherical Quillen--Lichtenbaum property saying the canonical map from stable motivic homotopy groups to 
stable étale motivic homotopy groups is an isomorphism in certain degrees.
Slice completeness is an essential input for showing the spherical property; 
we deduce this for base schemes such as $\Z[1/2]$ in Proposition \ref{prop:slice-app}.

As a final comment, 
we expect that most of the applications we establish hold over more general base schemes, 
where convenient reductions to small fields are not possible.
The proofs will require significantly different ideas.

\subsection*{Organization}
In \S\ref{sec:nilpotent-completion} we give proofs for some more or less standard facts about nilpotent completions 
in stable $\infty$-categories with $t$-structures.
While these results are relatively straightforward generalizations of Bousfield's pioneering work \cite{bousfield1979localization}, 
we could not locate a reference in the required generality.
These nilpotent completions will be our primary tool throughout the rest of the article.
In \S\ref{sec:rigidity} we prove a variant of Gabber rigidity.
We show that, 
for example, 
if $E \in \SH(X)^\cell$ where $X$ is essentially smooth over a Dedekind scheme, 
then $E(X_x^h)_\ell^\comp \wequi E(x)_\ell^\comp$ for any point $x \in X$ such that $\ell$ is invertible in $k(x)$. 
Here $X_x^h$ denotes the henselization of $X$ along $x$.
Our principal results are shown in \S\ref{sec:models}.
We establish a general method for exhibiting squares as above and provide a criterion for cartesianess in terms of 
étale and real étale cohomology, 
see Proposition \ref{prop:criterion}.
Next we verify this criterion for regular number rings, 
reducing essentially to global class field theory 
---
which is also how Dwyer--Friedlander established \eqref{equation:etalehomotopytype}.
In \S\ref{sec:applications} we discuss some applications, 
including a proof of Theorem \ref{theorem:2mainthmintro}.

\subsection*{Notation and conventions}
We freely use the language of (stable) infinity categories, 
as set out in \cite{lurie-htt,lurie-ha}.
Given a (stable) $\infty$-category $\scr C$ and objects $c, d \in \scr C$, 
we denote by $\Map(c,d) = \Map_{\scr C}(c,d)$ (respectively $\map(c,d) = \map_\scr{C}(c,d)$) the mapping space 
(respectively mapping spectrum).
Given a symmetric monoidal category $\scr C$, 
we denote the unit object by $\1 = \1_\scr{C}$.
We assume familiarity with the motivic stable category $\SH(S)$; 
see e.g., \cite[\S4.1]{bachmann-norms}.
We write $\Sigma^{p,q} = \Sigma^{p-q} \wedge \Gmp{q}$ for the bigraded suspension functor and $S^{p,q} = \Sigma^{p,q} \1$ 
for the bigraded spheres.

\subsection*{Acknowledgements}
We acknowledge the support of the  Centre for Advanced Study at the Norwegian Academy of Science and Letters in Oslo,
Norway, which funded and hosted our research project ``Motivic Geometry" during the 2020/21 academic year,
and the RCN Frontier Research Group Project no. 250399 ``Motivic Hopf Equations."

\section{Nilpotent completions} 
\label{sec:nilpotent-completion}
We axiomatize some well-known facts about nilpotent completions in presentably symmetric monoidal stable $\infty$-categories 
with a $t$-structure.
Our arguments are straightforward generalizations of \cite{bousfield1979localization} and \cite{mantovani2018localizations}.
Theorems \ref{thm:E-vs-pi0E-completion} and \ref{thm:1/x-comp} are the main results in this section.

\subsection{Overview}
Throughout we let $\scr C$ be a presentably symmetric monoidal $\infty$-category 
(i.e., the tensor product preserves colimits in each variable separately) 
provided with a $t$-structure which is compatible with the symmetric monoidal structure 
(i.e., $\scr C_{\ge 0} \otimes \scr C_{\ge 0} \subset \scr C_{\ge 0}$) and \emph{left complete} 
(i.e., for $X \in \scr C$ we have $X \wequi \lim_n X_{\le n}$).
Given $E \in \CAlg(\scr C)$ and $X \in \scr C$ recall \cite[Construction 2.7]{mathew2017nilpotence} 
the standard cosimplicial resolution (or \emph{cobar construction}) 
\[ 
\Delta_+ \to \scr C, [n] \mapsto X \otimes E^{\otimes n+1} 
\] 
whose limit is (for us by definition) the \emph{$E$-nilpotent completion} $X_E^\comp$.

We call $X \in \scr C$ \emph{connective} if $X \in \cup_n \scr C_{\ge n}$.
Recall that $R \in \CAlg(\scr C^\heart)$ is called \emph{idempotent} if the multiplication map 
$R \otimes^\heart R \to R \in \scr C^\heart$ is an equivalence.

\begin{theorem} 
\label{thm:E-vs-pi0E-completion}
Let $\scr C$ be left complete, $E \in \CAlg(\scr C_{\ge 0})$ and $X \in \scr C$.
Suppose that $\pi_0 E \in \CAlg(\scr C^\heart)$ is idempotent and $X$ is connective.
Then the canonical map 
\[ 
X_E^\comp \to X_{\pi_0 E}^\comp 
\] 
is an equivalence.
\end{theorem}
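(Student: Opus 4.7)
The plan is to exploit the natural map of cobar constructions induced by the truncation $E \to \pi_0 E =: R$, which is a morphism of connective $E_\infty$-algebras. It yields a map of cosimplicial objects
\[
C^\bullet_E := X \otimes E^{\otimes \bullet + 1} \longrightarrow X \otimes R^{\otimes \bullet + 1} =: C^\bullet_R
\]
in $\scr C_{\ge 0}$ whose limit is the map $X_E^\comp \to X_R^\comp$ I want to show is an equivalence. By left-completeness of $\scr C$, it suffices to check equivalence after applying $\tau_{\le m}$ for every $m \ge 0$, and I would attack this via a Bousfield--Kan spectral sequence for Tot applied to each cobar.

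On $\pi_0$ the idempotence hypothesis enters most transparently. Using compatibility of $\otimes$ with the $t$-structure and the connectivity of $X$, I compute
\[
\pi_0(X \otimes E^{\otimes n+1}) \wequi \pi_0 X \otimes^\heart R^{\otimes^\heart n+1} \wequi \pi_0 X \otimes^\heart R,
\]
the last step by iterated idempotence of $R$. All cosimplicial face and degeneracy maps act as the identity of $\pi_0 X \otimes^\heart R$ (the unit and multiplication of $R$ are isomorphisms on $\pi_0$), so $\pi_0 C^\bullet_E$ is cosimplicially constant at $\pi_0 X \otimes^\heart R$ and agrees with $\pi_0 C^\bullet_R$. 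In particular, the $t=0$ row of the Bousfield--Kan $E_2$-page is the same for both cobars.

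For higher rows, I would argue inductively that the $E_2$-pages (and hence the abutments after accounting for convergence) coincide: although $\pi_t(E^{\otimes n+1})$ and $\pi_t(R^{\otimes n+1})$ need not agree termwise for $t > 0$, the idempotence of $R$ forces the cosimplicial cohomology groups of $\pi_t C^\bullet_E$ and $\pi_t C^\bullet_R$ to match---essentially because the relevant Ext/Tor computations in $\scr C^\heart$ only see the $R$-module structure, which is preserved by $E \to R$. Combined with conditional convergence (guaranteed by left-completeness and the connectivity of the terms), this yields the equivalence on each Postnikov truncation, and passing to the limit finishes the proof.

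The chief obstacle is carrying out the $E_2$-page comparison rigorously in the generality of an arbitrary presentably symmetric monoidal stable $\infty$-category with compatible left-complete $t$-structure, rather than Bousfield's original setting of spectra. One has to verify that idempotence of $R$ in the heart---which is strictly weaker than idempotence of $E$ in $\scr C$, since higher homotopy of $R \otimes R$ may well be nontrivial---is still strong enough to equate the two spectral sequences at $E_2$. This axiomatization of \cite{bousfield1979localization} and \cite{mantovani2018localizations} is precisely the technical work flagged by the authors in the overview of this section.
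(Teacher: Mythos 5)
There is a genuine gap, and it sits exactly at the step you flag as the ``chief obstacle'': the two Bousfield--Kan $E_2$-pages do \emph{not} coincide in general, so the inductive comparison of cosimplicial cohomology groups cannot work. Take $\scr C = \mathrm{Sp}$ with the standard $t$-structure, $E = \1 = S^0$ and $X = S^0$. Then $\pi_0 E = \Z \in \CAlg(\mathrm{Ab})$ is idempotent and the theorem applies; but the $E$-cobar is the constant cosimplicial object on $X$, so its $E_2$-page is concentrated in cosimplicial degree $0$, whereas the $\pi_0 E$-cobar computes the $\mathrm{H}\Z$-based Adams spectral sequence of the sphere, whose $E_2$-page has plenty of classes in positive cosimplicial degree (e.g.\ $\eta \in \pi_1 S^0$ lives in positive Adams filtration). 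The two spectral sequences have the same abutment --- that is precisely the content of the theorem --- but not the same $E_2$-page, so any argument that tries to prove the theorem by matching $E_2$-pages is proving a false intermediate statement. The heuristic that ``the relevant Ext/Tor computations in $\scr C^\heart$ only see the $R$-module structure'' fails because for $t>0$ the groups $\pi_t(X \otimes E^{\otimes n+1})$ involve the higher homotopy of $E$ and its iterated Tor terms, which are not controlled by $R$. (Your $\pi_0$-row computation is fine, and is essentially Lemma \ref{lemm:produce-idempotent}-adjacent material, but it is the only row where the two pages agree.)

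The comparison has to happen one level up, at the level of towers rather than spectral sequence pages. The paper's route: introduce $E$-nilpotent and strongly $\pi_0(E)$-nilpotent objects and the corresponding notion of nilpotent \emph{resolution}, prove that any two strongly $R$-nilpotent resolutions of $X$ are equivalent as pro-objects (Proposition \ref{prop:resn-unique}, a formal Yoneda-type argument), and then check (Lemma \ref{lemm:build-resolution}) that the Postnikov truncations $\tau_{\le n} R_n(E,X)$ of the partial totalizations of the $E$-cobar form a strongly $\pi_0(E)$-nilpotent resolution --- for \emph{any} connective $E$ with idempotent $\pi_0$. Left completeness identifies $\lim_n \tau_{\le n} R_n(E,X)$ with $X_E^\comp$, and the uniqueness of resolutions then shows this limit depends only on $X$ and $\pi_0 E$. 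This is Bousfield's original mechanism, and it is the replacement for the $E_2$-page comparison you were hoping for.
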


One way of producing idempotent algebras is by taking quotients of the unit.
Given $L_1, \dots, L_n \in \scr C_{\ge 0}$ and maps $x_i: L_i \to \1$, 
we set 
\[ X/(x_1^{m_1}, x_2^{m_2}, \dots, x_n^{m_n}) 
= 
X \otimes \cof(x_n^{\otimes m_n}: 
L_n^{\otimes m_n} \to 1) \otimes \dots \otimes \cof(x_1^{\otimes m_1}: L_1^{\otimes m_1} \to 1) 
\]
The object $\pi_0(\1/(x_1, \dots, x_n)) \in \CAlg(\scr C^\heart)$ is idempotent.
For varying $m$,
the $\1/x_i^m$'s form an inverse system indexed on $\N$ in an evident way; 
by taking tensor products, 
the objects $X/(x_1^{m_1}, \dots, x_n^{m_n})$ form an $\N^n$-indexed inverse system.
We define the $x$-completion of $X$ as the limit
\[ 
X_{x_1, \dots, x_n}^\comp 
:=
\lim_{m_1, \dots, m_n} X/(x_1^{m_1}, \dots, x_n^{m_n})
\]

\begin{theorem} \label{thm:1/x-comp}
Suppose each $L_i\in \scr C_{\ge 0}$ is strongly dualizable with dual $DL_i \in \scr C_{\ge 0}$.
If $X \in \scr C$ is connective and $\scr C$ is left complete, 
then there is a canonical equivalence 
\[ 
X_{\pi_0(\1/(x_1, \dots, x_n))}^\comp 
\wequi 
X_{x_1, \dots, x_n}^\comp
\]
\end{theorem}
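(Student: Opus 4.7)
The plan is to combine Theorem \ref{thm:E-vs-pi0E-completion} with a direct identification of the power-tower completion and the cobar completion of $X$ with respect to $E := \1/(x_1, \ldots, x_n)$. First I verify that $E \in \CAlg(\scr C_{\ge 0})$: since $L_i \in \scr C_{\ge 0}$ and $\1 \in \scr C_{\ge 0}$, each cofiber $\1/x_i$ is connective, so the tensor product $E$ is connective. By hypothesis $\pi_0 E$ is idempotent, so Theorem \ref{thm:E-vs-pi0E-completion} gives $X_E^\comp \wequi X_{\pi_0 E}^\comp$, and it remains to produce a canonical equivalence $X_E^\comp \wequi X_{x_1,\ldots,x_n}^\comp$.

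To construct this equivalence, I would verify that the canonical map $X \to \lim_m X \otimes E_m$, where $E_m := \1/(x_1^{m_1}, \ldots, x_n^{m_n})$, exhibits the target as the $E$-nilpotent completion of $X$. This reduces to checking two conditions: (i) every $X \otimes E_m$ lies in the thick tensor ideal generated by $X \otimes E$, so the limit is $E$-nilpotent complete; and (ii) the map is an $E$-equivalence, in the sense that its cofiber is annihilated by $-\otimes E$. For (i), apply the octahedral axiom to the factorization $x_i^{m_i} = x_i^{m_i - 1} \circ (x_i \otimes L_i^{\otimes m_i - 1})$ to obtain cofiber sequences
\[ L_i^{\otimes m_i - 1} \otimes \1/x_i \to \1/x_i^{m_i} \to \1/x_i^{m_i - 1}, \]
and induct on $m_i$ to conclude that each $\1/x_i^{m_i}$ lies in the thick tensor ideal generated by $\1/x_i$; tensoring over $i$ then places $E_m$ in the thick tensor ideal of $E$.

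The main obstacle is (ii). Since each $L_i$ is strongly dualizable, so is each $E_m$, and hence $-\otimes E$ commutes with the defining limit. The problem therefore reduces to showing that $\lim_m E \otimes X \otimes \fib(\1 \to E_m) \wequi 0$. The fibers $\fib(\1 \to E_m)$ are built from tensor products of the $L_i^{\otimes m_i}$ via the cofiber sequences above, and the essential input is that each $x_i$ acts as zero on $\pi_0 E$ (a direct consequence of the idempotency of $\pi_0 E$, which forces the composite $\pi_0 L_i \to \pi_0 \1 \to \pi_0 E$ to vanish). Combined with the connectivity of the $L_i$ and $X$ and the left-completeness of $\scr C$, this allows one to bound below the connectivity of $E \otimes X \otimes \fib(\1 \to E_m)$ in terms of $\min_i m_i$ and conclude that the limit vanishes. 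Carrying out this multi-variable $t$-structure convergence argument, essentially a generalization of the single-variable analysis in \cite{mantovani2018localizations}, is the main technical content of the proof.
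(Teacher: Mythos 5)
Your overall strategy has a foundational problem: you take $E = \1/(x_1,\dots,x_n)$ to be an object of $\CAlg(\scr C_{\ge 0})$ and then invoke Theorem \ref{thm:E-vs-pi0E-completion} and the cobar construction for it. But quotients of the unit by elements are in general \emph{not} commutative algebras (the standard example being $\mathbb{S}/2$, which admits no unital multiplication at all), and the theorem's hypotheses give you no such structure. Your "verification" only checks connectivity, which is not the issue. Consequently neither ``$X_E^\comp$'' nor an application of Theorem \ref{thm:E-vs-pi0E-completion} to this $E$ is meaningful. This is precisely why the paper never forms the nilpotent completion at $\1/(x_1,\dots,x_n)$ itself: it works only with the idempotent algebra $R_n = \pi_0(\1/(x_1,\dots,x_n)) \in \CAlg(\scr C^\heart)$ (whose algebra structure and idempotency are \emph{proved} in Lemma \ref{lemm:produce-idempotent}, not hypothesized), and shows directly that the tower $\tau_{\le m}X/(x_1^{e_1},\dots,x_n^{e_n})$ is a \emph{strongly $R_n$-nilpotent resolution} of $X$, whose limit is $X_{R_n}^\comp$ by the uniqueness statement (Proposition \ref{prop:resn-unique} and Remark \ref{rmk:ampli}) and is $X_{x_1,\dots,x_n}^\comp$ by left completeness.

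Two further steps would fail even if the ring structure were available. First, your criterion for identifying the nilpotent completion --- ``the terms are $E$-nilpotent and the map is an $E$-equivalence'' --- is not the correct universal property; the nilpotent completion is characterized by the resolution condition $\colim_m [X\otimes E_m, Y] \wequi [X,Y]$ for all nilpotent $Y$, which is what Lemma \ref{lemm:x-comp-against-null} supplies in the paper. Second, your proposed mechanism for the convergence step is wrong: $\fib(\1 \to \1/x_i^{m_i}) \wequi L_i^{\otimes m_i}$ has connectivity \emph{independent} of $m_i$ (take $L_i = \1$, $x_i = 2$), so no lower bound on connectivity growing with $\min_i m_i$ exists. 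The actual reason the relevant limits/colimits collapse is that the transition maps become null after mapping into objects on which $x_i$ acts by zero --- and note that $x_i$ acting by zero on $\pi_0 E$ does not make it act by zero on $E$, which is another reason the argument must be routed through the heart. Your condition (i), identifying $\1/x_i^{m_i}$ as a finite extension of copies of $\1/x_i$ via the octahedral axiom, does match Step 2 of the paper's proof.
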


To apply Theorem \ref{thm:1/x-comp} in motivic stable homotopy theory we consider, 
for a scheme $S$, 
the \emph{homotopy $t$-structure} on $\SH(S)$; 
see e.g., \cite[\S B]{bachmann-norms}, \cite[\S1]{schmidt2018stable}.

\begin{theorem} 
\label{thm:HFp-completion}
Let $S$ be a quasi-compact quasi-separated scheme of finite Krull dimension and suppose $X \in \SH(S)$ is connective.
\begin{enumerate}
\item There is an equivalence  $X_\MGL^\comp \wequi X_\eta^\comp$.
\item If $1/\ell \in S$ then there is an equivalence $X_{H\F_\ell}^\comp \wequi X_{\eta,\ell}^\comp$.
\end{enumerate}
\end{theorem}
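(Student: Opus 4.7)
The plan is to chain together Theorems \ref{thm:E-vs-pi0E-completion} and \ref{thm:1/x-comp}, applied to the homotopy $t$-structure on $\SH(S)$. First I would verify the common hypotheses: for $S$ qcqs of finite Krull dimension, $\SH(S)$ with its homotopy $t$-structure is left complete (standard, and presumably cited in \S\ref{sec:nilpotent-completion}); both $\MGL$ and $H\F_\ell$ lie in $\SH(S)_{\ge 0}$; the Tate sphere $\Gm = S^{1,1}$ is $\otimes$-invertible (hence strongly dualizable) with dual $S^{-1,-1}$, and both are in $\SH(S)_{\ge 0}$ because $S^{p,q} \in \SH(S)^\heart$ when $p = q$; and the unit $\1$ is its own dual. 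This supplies the data needed to apply Theorem \ref{thm:1/x-comp} to $\eta: \Gm \to \1$ and $\ell: \1 \to \1$.

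The crux is to identify the idempotent algebras $\pi_0 \MGL$ and $\pi_0 H\F_\ell$ in $\SH(S)^\heart$ with $\pi_0(\1/\eta)$ and $\pi_0(\1/(\eta,\ell))$ respectively. Since $\MGL$ is oriented, $\eta$ acts nullhomotopically on it, so the unit $\1 \to \MGL$ factors as $\1/\eta \to \MGL$; similarly $\ell$ kills $H\F_\ell$ and we get a factorization $\1/(\eta,\ell) \to H\F_\ell$. One then shows that these factorizations induce isomorphisms on $\pi_0$, which automatically transfers the idempotence of $\pi_0(\1/(x_1, \dots, x_n))$ (guaranteed by the setup of Theorem \ref{thm:1/x-comp}) to $\pi_0\MGL$ and $\pi_0 H\F_\ell$.

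With the identifications in hand, Theorem \ref{thm:E-vs-pi0E-completion} gives
\[ X_\MGL^\comp \wequi X_{\pi_0 \MGL}^\comp \quad \text{and} \quad X_{H\F_\ell}^\comp \wequi X_{\pi_0 H\F_\ell}^\comp, \]
and Theorem \ref{thm:1/x-comp} applied with $(x_1) = (\eta)$ in the first case and $(x_1, x_2) = (\eta, \ell)$ in the second then rewrites the right-hand sides as $X_\eta^\comp$ and $X_{\eta,\ell}^\comp$.

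The hard part is the $\pi_0$-level identification over a general qcqs base of finite Krull dimension. The cleanest route is to reduce to the case of fields, where Morel's theorem identifies $\pi_0(\1/\eta)$ with unramified Milnor $K$-theory and the orientation identifies $\pi_0\MGL$ in the same way, and then to promote this to general $S$ via continuity of the homotopy $t$-structure (passing through henselian localizations and stalks at the generic points), together with exactness of truncation for the cofiber sequence defining $\1/\ell$ in the $H\F_\ell$ case.
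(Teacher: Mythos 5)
Your overall strategy coincides with the paper's: both proofs reduce Theorem \ref{thm:HFp-completion} to Theorems \ref{thm:E-vs-pi0E-completion} and \ref{thm:1/x-comp} via left completeness of the homotopy $t$-structure together with the identifications $\pi_0(\MGL)\wequi\pi_0(\1/\eta)$ and $\pi_0(\mathrm{H}\F_\ell)\wequi\pi_0(\1/(\eta,\ell))$. The difference is in how those identifications are obtained. For $\MGL$ the paper simply cites Hoyois (connectivity of $\MGL$ and $\pi_0(\MGL)\wequi\pi_0(\1/\eta)$ over general bases); the proof of that citation is essentially the field reduction you sketch. For $\mathrm{H}\F_\ell$, however, the paper does \emph{not} run a separate field-level computation: it invokes the Hopkins--Morel--Hoyois--Spitzweck equivalence $\mathrm{H}\F_\ell\wequi\MGL/(\ell,x_1,x_2,\dots)$ over $\Z[1/\ell]$ and observes that, since $x_i\in\pi_{2i,i}\MGL$ and $\Sigma^{2i,i}\MGL\in\SH(S)_{\ge i}\subset\SH(S)_{>0}$, killing the $x_i$ changes neither connectivity nor $\pi_0$, so both claims for $\mathrm{H}\F_\ell$ drop out of the $\MGL$ case. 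This is cleaner than your route and sidesteps the weakest step of your plan: over a general qcqs base of finite Krull dimension (possibly non-noetherian and of mixed characteristic) the homotopy object $\pi_0$ is a Nisnevich sheaf on $\Sm_S$ and is \emph{not} determined by stalks at the generic points of $S$, so ``promote from fields by continuity'' requires exactly the base-change and gluing arguments that constitute the content of the cited results; as a reduction to known literature your argument is fine, but as a self-contained proof that step would need to be fleshed out. One further small point: left completeness of the homotopy $t$-structure over non-noetherian qcqs bases is not quite standard --- the paper needs a footnote replacing the noetherian hypothesis of the cited reference by finite homotopy dimension of the Nisnevich topos.
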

\begin{proof}
The homotopy $t$-structure is left complete by 
\cite[Corollary 3.8]{schmidt2018stable}.\footnote{This reference assumes $S$ noetherian, 
but this is only used to obtain finite homotopy dimension of the Nisnevich topoi, 
which holds in the stated generality by \cite[Theorem 3.17]{clausen2019hyperdescent} --- 
see also \cite[Theorem 4.1]{rosenschon2000descent}.}

(1) Owing to \cite[Theorem 3.8, Corollary 3.9]{hoyois-algebraic-cobordism} we have 
$\MGL \in \SH(S)_{\ge 0}$ and $\pi_0(\MGL) \wequi \pi_0(\1/\eta)$.

(2) We need to prove that $\mathrm{H}\F_\ell \in \SH(S)_{\ge 0}$ and 
$\pi_0(\mathrm{H}\F_\ell) \wequi \pi_0(\1/(\eta,\ell))$.
Since $x_i\in\pi_{2i,i}\MGL$ and $\Sigma^{2i,i}\MGL=\Sigma^i\Gmp{i}\wedge\MGL\in\SH(S)_{\ge i}\subset\SH(S)_{>0}$, 
both of these claims follow from the Hopkins--Morel isomorphism 
\[ 
\mathrm{H}\F_\ell 
\wequi 
\MGL/(\ell, x_1, x_2, \dots)
\] 
shown in 
\cite[Theorem 10.3]{spitzweck2012commutative}.\footnote{This reference assumes $S$ noetherian, 
but since the equivalence exists over $\Z[1/\ell]$ it persists after pullback to $S$.} 
\end{proof}

\begin{remark}
Theorem \ref{thm:HFp-completion} implies that a map $\alpha: E \to F \in \SH(S)_{\ge 0}$ is an $(\eta,\ell)$-adic equivalence 
if and only if $\alpha \wedge \mathrm{H}\F_\ell$ is an equivalence, 
which is also easily seen by considering homotopy objects.
This weaker statement, however, cannot be used as a replacement for Theorem \ref{thm:HFp-completion} in this work.
\end{remark}

\subsection{Proofs}
Recall that $\scr C$ is a presentably symmetric monoidal $\infty$-category equipped with a compatible $t$-structure.

\begin{definition}
\begin{enumerate}
\item Let $E \in \CAlg(\scr C)$.
Then $X \in \scr C$ is \emph{$E$-nilpotent} if it lies in the thick subcategory generated by objects of the form 
$E \otimes Y$ for $Y \in \scr C$.
\item Let $R \in \CAlg(\scr C^\heart)$ be idempotent.
Then $F \in \scr C^\heart$ is \emph{strongly $R$-nilpotent} if $F$ admits a finite filtration whose subquotients 
are $R$-modules.\footnote{Note that $R$ being idempotent is a property, not additional data.}
Moreover, 
$X \in \scr C$ is strongly $R$-nilpotent if it is bounded in the $t$-structure and all homotopy objects are 
strongly $R$-nilpotent.
\end{enumerate}
\end{definition}

\begin{example} \label{ex:htpy-E-mod-nilp}
If $X \in \scr C$ is an $E$-module in the homotopy category, then it is a summand of $X \otimes E$, and thus $X$ is $E$-nilpotent.
\end{example}

\begin{lemma} 
\label{lemm:strongly-R-nilp-basics}
Suppose $R \in \CAlg(\scr C^\heart)$ is idempotent.
\begin{enumerate}
\item 
Let 
\[ A \to B \to C \to D \to E \in \scr C^\heart \] 
be an exact sequence.
If $A,B,D,E$ are strongly $R$-nilpotent, then so is $\scr C$.
\item An object $X \in \scr C$ is strongly $R$-nilpotent if and only if it is $R$-nilpotent and bounded in the $t$-structure.
\end{enumerate}
\end{lemma}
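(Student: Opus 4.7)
The plan is to prove $(1)$ by reducing to a Serre-subcategory property for $R$-modules in $\scr C^\heart$, and then to deduce $(2)$ by a thick-subcategory argument applied to a well-chosen property of $R$-nilpotent objects.

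For $(1)$, I first observe that the assertion is equivalent to strongly $R$-nilpotent objects in $\scr C^\heart$ being closed under subobjects, quotients, and extensions: setting $K := \mathrm{im}(B \to C) = \ker(C \to D)$ and $I := \mathrm{im}(C \to D) = \ker(D \to E)$ produces a short exact sequence $0 \to K \to C \to I \to 0$ with $K$ a quotient of $B$ and $I$ a subobject of $D$. Extension closure is immediate by concatenating filtrations; subobject and quotient closure reduce---by intersecting or pushing forward a given filtration $F_0 \subset \cdots \subset F_n = F$---to the claim that subobjects and quotients in $\scr C^\heart$ of $R$-modules are again $R$-modules, since $(F \cap F_i)/(F \cap F_{i-1})$ injects into $F_i/F_{i-1}$ and the image of $F_i$ in any quotient is correspondingly a quotient of $F_i/F_{i-1}$. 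Idempotence of $R$ identifies the $R$-modules with the essential image of the smashing localization $R \otimes^\heart (-)$, and the key input is that this image is a Serre subcategory of $\scr C^\heart$. This last step---a standard but nontrivial fact about idempotent algebras, paralleling the theory of solid rings in abelian groups---is what I expect to be the main obstacle.

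For $(2)(\Rightarrow)$, I would induct on the $t$-amplitude of $X$ via the cofiber sequences $\tau_{\le k-1} X \to \tau_{\le k} X \to \Sigma^k \pi_k X$, using that $R$-nilpotent objects form a thick subcategory. This reduces to the case $X \in \scr C^\heart$, whose finite filtration by $R$-modules exhibits it as an iterated extension of $R$-modules, each being $R$-nilpotent by Example \ref{ex:htpy-E-mod-nilp}.

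For $(2)(\Leftarrow)$, I would consider the property $P(X) := $ ``every $\pi_k X$ is strongly $R$-nilpotent in $\scr C^\heart$''. Using $(1)$, $P$ is stable under shifts (trivially), cofibers (from the long exact sequence in homotopy, where each $\pi_k$ of the cofiber is an extension of a subobject of $\pi_{k-1} X$ by a quotient of $\pi_k Y$), and retracts ($\pi_k$ of a retract is a retract, and retracts of $R$-modules are $R$-modules via the restricted action). The generators satisfy $P$: the homotopy object $\pi_k(R \otimes Y)$ inherits an $R$-module structure from $\pi_0 R = R$, hence is strongly $R$-nilpotent with the trivial one-step filtration. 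Consequently $P$ holds on every $R$-nilpotent object, and combined with the boundedness hypothesis on $X$ this forces $X$ to be strongly $R$-nilpotent.
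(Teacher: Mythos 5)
Your part (2) is essentially the paper's argument and is fine (modulo its dependence on (1)): boundedness plus finitely-filtered-by-$R$-modules gives $R$-nilpotence via Example \ref{ex:htpy-E-mod-nilp}, and conversely the property ``all homotopy objects are strongly $R$-nilpotent'' is thick and holds for the generators $R\otimes Y$ since $\pi_k(R\otimes Y)$ is a $\pi_0(R)=R$-module.

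Part (1), however, has a genuine gap, and it is exactly at the step you flagged as the main obstacle. The claim that the $R$-modules form a \emph{Serre} subcategory of $\scr C^\heart$ --- closed under arbitrary subobjects and quotients --- is false for idempotent $R$. Take $\scr C$ the category of spectra, $\scr C^\heart$ abelian groups, and $R=\Q$ (a solid ring, hence idempotent): then $\Z\subset\Q$ is a subobject of an $R$-module which is not an $R$-module, and is not strongly $\Q$-nilpotent either (the strongly $\Q$-nilpotent abelian groups are exactly the $\Q$-vector spaces, since an extension of $\Q$-vector spaces is uniquely divisible). Consequently your reduction of (1) to closure under subobjects, quotients and extensions proves a false statement; note also that this closure is strictly stronger than, not equivalent to, the five-term assertion. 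What is true, and what the five-term statement actually needs, is the \emph{weak} Serre property: in your notation $K=\operatorname{im}(B\to C)\cong\operatorname{coker}(A\to B)$ and $I=\ker(D\to E)$ are a cokernel and a kernel of maps \emph{between} strongly $R$-nilpotent objects, not arbitrary quotients or subobjects. At the level of $R$-modules this closure under kernels and cokernels does hold, because the forgetful functor $R\Mod(\scr C^\heart)\to\scr C^\heart$ creates both limits and colimits, so kernels and cokernels of $R$-module maps carry (necessarily unique) $R$-module structures; but your filtration argument then breaks, since $(F'\cap F_i)/(F'\cap F_{i-1})$ is only a subobject of $F_i/F_{i-1}$, not a kernel of a map to another $R$-module. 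Passing from $R$-modules to strongly $R$-nilpotent objects therefore requires a genuine double induction on the lengths of the filtrations of source and target (snake-lemma bookkeeping), which is precisely the content of \cite[Lemmas 7.2.7--7.2.9]{mantovani2018localizations} that the paper invokes.
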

\begin{proof}
(1) The proofs of \cite[Lemmas 7.2.7--7.2.9]{mantovani2018localizations} apply unchanged.
(2) Example \ref{ex:htpy-E-mod-nilp} implies that strongly $R$-nilpotent objects are $R$-nilpotent, 
being finite extensions of homotopy $R$-modules.
It thus suffices to show that if $X$ is $R$-nilpotent, then its homotopy objects $\pi_i^{\scr C}(X) \in \scr C^\heart$ are strongly $R$-nilpotent.
This is clear for free $R$-modules and the property is preserved by taking summands and shifts and cofibers by (1).
The result follows.
\end{proof}

\begin{definition}
\begin{enumerate}
\item If $E \in \CAlg(\scr C)$, $X \in \scr C$, a tower of the form
\[ X \to \dots \to X_2 \to X_1 \to X_0 \] 
is called an \emph{$E$-nilpotent resolution} if each $X_i$ is $E$-nilpotent and for every $E$-nilpotent $Y \in \scr C$, 
we have 
\[ 
\colim_n [X_n, Y] \xrightarrow{\wequi} [X, Y] 
\]
\item 
If $R \in \CAlg(\scr C^\heart)$ is idempotent and $X \in \scr C$, a tower of the form 
\[ X \to \dots \to X_2 \to X_1 \to X_0 \] 
is called a \emph{strongly $R$-nilpotent resolution} if each $X_i$ is strongly $R$-nilpotent and for every strongly 
$R$-nilpotent $Y \in \scr C$, we have 
\[ 
\colim_n [X_n, Y] \xrightarrow{\wequi} [X, Y] 
\]
\end{enumerate}
\end{definition}

\begin{proposition} 
\label{prop:resn-unique}
For $X, Y \in \scr C$ and $X_\bullet, Y_\bullet$ $E$-nilpotent (respectively strongly $R$-nilpotent) resolutions, 
we have 
\[ 
\Map_{\Pro(\scr C)}(X_\bullet, Y_\bullet) \wequi \lim_n \Map(X, Y_\bullet) 
\]
Thus any map $X \to Y$ induces a canonical morphism of towers $X_\bullet \to Y_\bullet$.
In particular, 
if $X \wequi Y$, 
then $X_\bullet \wequi Y_\bullet \in \Pro(\scr C)$ and $\lim_n X_n \wequi \lim_n Y_n$.
\end{proposition}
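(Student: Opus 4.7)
The plan is to compute both sides using the standard formula for mapping spaces in pro-categories,
\[
\Map_{\Pro(\scr C)}(X_\bullet, Y_\bullet) \wequi \lim_m \colim_n \Map_{\scr C}(X_n, Y_m).
\]
For each fixed $m$, the object $Y_m$ belongs to the relevant class ($E$-nilpotent, respectively strongly $R$-nilpotent), and the goal is to identify $\colim_n \Map(X_n, Y_m)$ with $\Map(X, Y_m)$. The defining property of a resolution gives this only at the level of $\pi_0$, so the first substantive step is to upgrade it to a full mapping-space equivalence. Since $\scr C$ is stable, $\pi_k \Map(A, B) \iso [A, \Sigma^{-k} B]$, so it suffices to check the equivalence on each $\pi_k$. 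The class of $E$-nilpotent objects is thick and hence closed under shifts; the class of strongly $R$-nilpotent objects is also closed under shifts, since by Lemma~\ref{lemm:strongly-R-nilp-basics} strongly $R$-nilpotent objects are precisely the bounded $R$-nilpotent ones, and both properties are shift-invariant. Applying the defining property of the resolution to each $\Sigma^{-k} Y_m$ therefore yields the $\pi_k$-level equivalence, and taking $\lim_m$ gives the displayed formula.

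For the second assertion, a map $f : X \to Y$ in $\scr C$ combines with the structure maps $Y \to Y_m$ of the resolution to produce a compatible family $X \to Y \to Y_m$, that is, a point of $\lim_m \Map(X, Y_m)$; via the first part, this point corresponds to a canonical morphism $X_\bullet \to Y_\bullet$ in $\Pro(\scr C)$. If $f$ is an equivalence, the same construction applied to $f^{-1}$ produces an inverse pro-morphism, so $X_\bullet \wequi Y_\bullet \in \Pro(\scr C)$; applying the limit functor $\Pro(\scr C) \to \scr C$ then yields $\lim_n X_n \wequi \lim_n Y_n$.

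I expect the main technical point to be the upgrade from $\pi_0$ to full mapping spaces, which reduces to the closure of each nilpotent class under shifts. For $E$-nilpotence this is immediate from the thick-subcategory definition, while for strong $R$-nilpotence it follows from Lemma~\ref{lemm:strongly-R-nilp-basics} together with shift-invariance of boundedness. Once this is in place, the remainder of the argument is a formal manipulation of pro-category mapping spaces.
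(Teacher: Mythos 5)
Your proof is correct and follows the same route as the paper: the standard $\lim\colim$ formula for pro-mapping spaces, followed by identifying the inner colimit via the defining property of a resolution. The paper's proof is a two-line version of yours; the upgrade from the $\pi_0$-level definition to a full mapping-space equivalence (via closure of each nilpotent class under shifts) is exactly the detail the paper leaves implicit, and you justify it correctly using Lemma~\ref{lemm:strongly-R-nilp-basics}.
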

\begin{proof}
Essentially by definition we have 
\[ 
\Map(X_\bullet, Y_\bullet) \wequi \lim_n \colim_m \Map(X_m, Y_n) 
\] 
The colimit is equivalent to $\Map(X, Y_n)$ by the definition of a resolution.
\end{proof}

\begin{lemma} 
\label{lemm:build-resolution}
Let $E \in \CAlg(\scr C)$ and $X \in \scr C$.
\begin{enumerate}
\item The tower of partial totalizations of the standard cosimplicial objects $X \otimes E^{\otimes \bullet}$ is an $E$-nilpotent resolution of $X$.
\item Suppose that $E \in \scr C_{\ge 0}$ and $\pi_0 E$ is idempotent.
Then if $X \to X_\bullet$ is any $E$-nilpotent resolution by connective objects 
(e.g., if $X$ is connective, the one arising from (1)), 
then $X \to \tau_{\le \bullet} X_\bullet$ is a strongly $\pi_0(E)$-nilpotent resolution.
\end{enumerate}
\end{lemma}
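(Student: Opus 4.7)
For (1), I would first check that each partial totalization $X_n := \operatorname{Tot}_n(X \otimes E^{\otimes \bullet+1})$ is $E$-nilpotent, and then verify the resolution property. Since $\operatorname{Tot}_n$ is a finite limit of the terms $X \otimes E^{\otimes k+1}$ for $0 \le k \le n$, each of which is an $E$-module in the homotopy category via the first $E$-factor and hence $E$-nilpotent by Example~\ref{ex:htpy-E-mod-nilp}, thickness of the class of $E$-nilpotent objects yields the first claim. For the resolution property, the class of $Y$ satisfying $\colim_n [X_n, Y] \wequi [X, Y]$ is itself thick (apply the five-lemma on both sides, both of which respect cofiber sequences and retracts), so it suffices to handle the generating case $Y = E \otimes Z$. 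For such $Y$, the cosimplicial system $\operatorname{Map}(X \otimes E^{\otimes \bullet+1}, Y)$ admits an extra codegeneracy stemming from the $E$-module structure on $Y$ and the multiplication on $E$, exhibiting it as split over its augmentation $\operatorname{Map}(X, Y)$; this forces $\colim_n [X_n, Y]$ to agree with $[X, Y]$, as in the standard argument of Bousfield~\cite{bousfield1979localization} generalized in \cite{mantovani2018localizations}, which transfers without change.

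For (2), I would first establish that each $\tau_{\le n} X_n$ is strongly $\pi_0(E)$-nilpotent and then deduce the resolution property from part (1). For strong nilpotence, $\tau_{\le n} X_n$ is bounded above by $n$ by truncation and bounded below by connectivity of $X_n$, so by the definition it suffices to see that its homotopy objects in $\scr C^\heart$ are strongly $\pi_0(E)$-nilpotent. Since $X_n$ is $E$-nilpotent, it lies in the thick subcategory generated by the $E \otimes Y$'s; the homotopy objects $\pi_i(E \otimes Y) \in \scr C^\heart$ are $\pi_0(E)$-modules (the $E$-module structure on $E \otimes Y$ descends to the heart), hence strongly $\pi_0(E)$-nilpotent via the trivial one-step filtration. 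Running the long exact sequences of homotopy objects through the thick subcategory closure and invoking Lemma~\ref{lemm:strongly-R-nilp-basics}(1) to handle the resulting five-term sequences propagates strong $\pi_0(E)$-nilpotence to all homotopy objects of $X_n$, and a fortiori to those of $\tau_{\le n} X_n$.

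For the resolution property, suppose $Y$ is strongly $\pi_0(E)$-nilpotent, so $Y \in \scr C_{\le m}$ for some $m$. Since $\pi_0(E)$ is an $E$-algebra via the Postnikov truncation $E \to \tau_{\le 0} E = \pi_0(E)$, every $\pi_0(E)$-module carries an induced $E$-module structure; applying this to the generators $\pi_0(E) \otimes Z$ and using thickness shows $Y$ is also $E$-nilpotent. The truncation-inclusion adjunction then gives $[\tau_{\le n} X_n, Y] = [X_n, Y]$ for $n \ge m$, so $\colim_n [\tau_{\le n} X_n, Y] = \colim_n [X_n, Y] = [X, Y]$ by the resolution property of $X_\bullet$ (which holds by hypothesis, or by part (1) when $X$ is connective). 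The main technical obstacle I anticipate is the extra-codegeneracy step in (1): this is where the $E$-algebra structure really enters, and one must track the argument at the level of the partial totalizations $\operatorname{Tot}_n$, rather than just the full $\operatorname{Tot} = X_E^\comp$, in order to extract the colimit statement rather than the more formal limit statement $\operatorname{Map}(X_E^\comp, Y) \wequi \operatorname{Map}(X, Y)$.
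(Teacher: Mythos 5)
Your overall strategy coincides with the paper's: reduce the resolution property to the case of (free) $E$-modules by thickness, exploit the module structure to split the cobar construction, and in part (2) use truncation together with the observation that strongly $\pi_0(E)$-nilpotent objects are bounded above and $E$-nilpotent. Part (2) of your argument is essentially identical to the paper's (including the reduction via Lemma \ref{lemm:strongly-R-nilp-basics}(1) to free $E$-modules and the identity $[\tau_{\le n}X_n,Y]=[X_n,Y]$ for $Y$ bounded above), and your proof that the partial totalizations are $E$-nilpotent --- as finite limits of terms $X\otimes E^{\otimes k+1}$, each an $E$-module up to homotopy --- is a clean variant of the paper's, which instead cites the cofiber sequences $\cof(X_i\to X_{i-1})\wequi\Sigma\cof(T_i(E,X)\to T_{i-1}(E,X))$ from \cite[Proposition 2.5(1)]{mathew2017nilpotence}.

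The one place where your write-up stops short is exactly the step you flag at the end: converting the extra codegeneracy on $\map(X\otimes E^{\otimes\bullet+1},Y)$ into the statement $\colim_n[X_n,Y]\wequi[X,Y]$ about \emph{partial totalizations}. As you note, splitness of the augmented cosimplicial object only formally gives control of the full $\operatorname{Tot}$, and $\map(-,Y)$ does not simply convert $\operatorname{Tot}_n$ into a partial geometric realization. The paper closes this by invoking \cite[Proposition 2.14]{mathew2017nilpotence}: since partial totalizations are finite limits they commute with $\otimes X$, giving the identification $X_n\wequi X\otimes\cof(I^{\otimes n}\to\1)$ with $I=\fib(\1\to E)$, hence $\fib(X\to X_n)\wequi X\otimes I^{\otimes n}$. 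The claim then becomes $\colim_n\map(X\otimes I^{\otimes n},Y)=0$; for $Y$ an $E$-module this colimit is a retract of $\colim_n\map(X\otimes I^{\otimes n}\otimes E,Y)$, which vanishes because the transition maps $I^{\otimes n+1}\otimes E\to I^{\otimes n}\otimes E$ are null (\cite[Proposition 2.5(2)]{mathew2017nilpotence} --- this is where the splitting you describe actually gets used). So your proposal is on the right track, but this identification of the fibers of the tower and the nullity of the transition maps after smashing with $E$ is the missing ingredient, not merely a routine verification.
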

\begin{proof}
(1) Since partial totalizations are finite limits they commute with $\otimes X$, 
by stability,  
and are thus given by $X_i = X \otimes \cof(I^{\otimes i} \to \1)$, 
where $I = \fib(\1 \to E)$, 
see \cite[Proposition 2.14]{mathew2017nilpotence}.
In the notation of loc. cit. we get $\cof(X_i \to X_{i-1}) \wequi \Sigma \cof(T_i(E, X) \to T_{i-1}(E, X))$ and $X_0 = 0$. 
This implies $X_i$ is $E$-nilpotent by \cite[Proposition 2.5(1)]{mathew2017nilpotence}.
To conclude, 
it suffices to prove that if $Y$ is $E$-nilpotent, 
then $\colim_i \map(X_i, Y) \wequi \map(X, Y)$.
The class of objects $Y$ satisfying the latter equivalence is thick, so we may assume that $Y$ is an $E$-module.
We are reduced to proving that $\colim_i \map(I^{\otimes i} \otimes X, Y) = 0$.
But this is a summand of $\colim_i \map(I^{\otimes i} \otimes X \otimes E, Y)$, $Y$ being an $E$-module, 
and the transition maps $I^{\otimes i+1} \otimes E \to I^{\otimes i} \otimes E$ are null by 
\cite[Proposition 2.5(2)]{mathew2017nilpotence}, so the colimit vanishes as desired.

(2) We first show that each $\tau_{\le n} X_n$ is strongly $R$-nilpotent, 
and more generally, 
that if $Y$ is $E$-nilpotent then each $\pi_i(Y)$ is strongly $R$-nilpotent.
By Lemma \ref{lemm:strongly-R-nilp-basics}(1) we may assume $Y$ is a (free) $E$-module; 
in this case, each $\pi_i(Y)$ is a $\pi_0(E)$-module.\NB{ref}
Suppose $Y \in \scr C$ is strongly $\pi_0(E)$-nilpotent.
Then $Y$ is $E$-nilpotent since any $\pi_0(E)$-module is an $E$-module.
Finally, we have 
\[ 
\colim_n [\tau_{\le n} X_n, Y] \wequi \colim_n [X_n, Y] \wequi [X, Y] 
\] 
Here the first equivalence holds since $Y$ is bounded above and the second because $Y$ is $E$-nilpotent.
\end{proof}

Next we prove that the $E$-nilpotent completion only depends on $\pi_0(E)$.

\begin{proof}[Proof of Theorem \ref{thm:E-vs-pi0E-completion}.]
For $E \in \CAlg(\scr C_{\ge 0})$ and $X \in \scr C$, 
denote by $R_n(E, X)$ the $n$-th partial totalization of $X \otimes E^{\otimes \bullet}$, 
so that $X \to R_\bullet(E, X)$ is a tower with limit $X \to X_E^\comp$.
By left completeness and cofinality we have 
\[ 
X_E^\comp \wequi \lim_{m,n} \tau_{\le m} R_n(X, E) \wequi \lim_n \tau_{\le n} R_n(X, E) 
\]
By Lemma \ref{lemm:build-resolution}, 
the right hand side is the limit of a strongly $\pi_0(E)$-nilpotent resolution, 
which by Proposition \ref{prop:resn-unique} only depends on $X$ and $\pi_0(E)$.
\end{proof}

\begin{remark} 
\label{rmk:ampli}
The proof also verifies that any strongly $\pi_0(E)$-nilpotent resolution of $X$ has limit $X_{\pi_0 E}^\comp$.
\end{remark}

We now turn to the study of $x$-completions.

\begin{lemma} 
\label{lemm:x-comp-against-null}
Let $L_1, \dots, L_n \in \scr C$ be strongly dualizable and $x_i: L_i \to \1$.
Let $Y \in \scr C$ and suppose that, 
for every $i$, 
the map \[ Y \otimes L_i \xrightarrow{x_i} Y \] is null.
Then there is an equivalence
\[ 
\colim_{m_1, \dots, m_n} \map(X/(x_1^{m_1}, \dots, x_n^{m_n}), Y) \wequi \map(X, Y) 
\]
\end{lemma}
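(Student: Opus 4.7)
The plan is to induct on $n$, with essentially all the content sitting in the base case $n=1$; the inductive step is a Fubini for filtered colimits.

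For the base case, suppress subscripts and apply $\map(-,Y)$ to the defining cofiber sequence
\[
X \otimes L^{\otimes m} \xrightarrow{\id_X \otimes x^{\otimes m}} X \to X/x^m
\]
to obtain the fiber sequence of spectra
\[
\map(X/x^m, Y) \to \map(X, Y) \to \map(X \otimes L^{\otimes m}, Y).
\]
Filtered colimits in spectra preserve fiber sequences, so after taking $\colim_m$ it suffices to prove $\colim_m \map(X \otimes L^{\otimes m}, Y) \wequi 0$. Strong dualizability of $L$ rewrites $\map(X \otimes L^{\otimes m}, Y) \wequi \map(X, Y \otimes DL^{\otimes m})$, and identifies the transition map with postcomposition by $(Y \otimes Dx) \otimes \id_{DL^{\otimes m}} \colon Y \otimes DL^{\otimes m} \to Y \otimes DL^{\otimes m+1}$, where $Dx \colon \1 \to DL$ is the dual of $x$. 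The hypothesis that $x \otimes \id_Y$ is null translates, via the adjunction $\map(L \otimes Y, Y) \wequi \map(Y, Y \otimes DL)$, into the nullity of $Y \otimes Dx$; tensoring a null morphism with an identity is null, so every transition in the colimit system is null. Each transition therefore induces zero on every $\pi_k$, and since filtered colimits in spectra commute with $\pi_k$, the colimit has vanishing homotopy groups, hence is zero.

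For the inductive step, the identification $X/(x_1^{m_1}, \dots, x_n^{m_n}) = (X/x_n^{m_n})/(x_1^{m_1}, \dots, x_{n-1}^{m_{n-1}})$ (commutativity of tensor product) together with the decomposition $\colim_{\N^n} = \colim_{m_n}\colim_{(m_1,\dots,m_{n-1})}$ allows iteration. For each fixed $m_n$, the inductive hypothesis applied to $X/x_n^{m_n}$ and the maps $x_1, \dots, x_{n-1}$ (whose nullity with respect to $Y$ is part of the main hypothesis) identifies the inner colimit with $\map(X/x_n^{m_n}, Y)$; the remaining $\colim_{m_n}$, by the $n=1$ case for $(X, x_n)$, then produces $\map(X, Y)$.

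The subtlety most worth scrutinizing is the step where one concludes that a direct system of spectra with null transition maps has zero colimit. In a general stable $\infty$-category this would demand coherence of the null-homotopies, but here the transitions are concretely of the form ``tensor a single null map with an identity,'' so the $\pi_k$-level argument above is self-contained and rigorous.
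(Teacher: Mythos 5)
Your proof is correct and follows essentially the same route as the paper's: reduce via the (co)fiber sequence to showing $\colim_m \map(X \otimes L^{\otimes m}, Y) = 0$, dualize to identify the transition maps with tensoring against the null map $Y \to Y \otimes DL$, and handle general $n$ by iterating the colimit over the decomposition $X/(x_1^{m_1},\dots,x_n^{m_n}) \wequi (X/x_j^{m_j})/(\text{rest})$. The only differences are cosmetic (which variable you peel off first, and your more explicit justification that a sequential system of spectra with null transition maps has zero colimit).
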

\begin{proof}
As a first observation, 
note that the maps 
$Y \otimes L_i \xrightarrow{x_i} Y$ and $Y \xrightarrow{Dx_i} Y \otimes DL_i$ correspond
under the equivalence $\Map(Y \otimes L_i, Y) \wequi \Map(Y, Y \otimes D(L_i))$.
It follows that $Dx_i$ is null.

First consider the case $n=1$.
By definition we have $\fib(X \to X/x^m) \wequi X \otimes L^{\otimes m}$.
Hence it suffices to prove $\colim_m \map(X \otimes L^{\otimes m}, Y) = 0$.
This term can be identified with $\colim_m \map(X, (DL)^{\otimes m} \otimes Y)$, 
and the transition maps in this system are null by our first observation.
In the general case, 
we note the equivalence
\[ 
X/(x_1^{m_1}, \dots, x_n^{m_n}) \wequi (X/x_1^{m_1})/(x_2^{m_2}, \dots, x_n^{m_n}) 
\]
Hence we get
\begin{align*}
\colim_{m_1, \dots, m_n} \map(X/(x_1^{m_1}, \dots, x_n^{m_n}), Y)
  &\wequi \colim_{m_1} \colim_{m_2, \dots, m_n} \map((X/x_1^{m_1})/(x_2^{m_2}, \dots, x_n^{m_n}), Y) \\
  &\wequi \colim_{m_1} \map(X/x_1^{m_1}, Y) \\
  &\wequi \map(X, Y)
\end{align*}
The first equivalence holds since colimits commute, and the other two hold by induction.
\end{proof}

\begin{lemma} 
\label{lemm:dualizable-pi}
Suppose $L \in \scr C_{\ge 0}$ is strongly dualizable with strong dual $DL \in \scr C_{\ge 0}$.
Then, 
for all $X \in \scr C$,
there are equivalences
\[
\pi_i(X\otimes L) 
\wequi \pi_i(X)\otimes L
\wequi \pi_i(X)\otimes^\heart \pi_0(L)
\]
\end{lemma}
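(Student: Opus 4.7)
The plan is to reduce the entire statement to the fact that $\ph \otimes L \colon \scr C \to \scr C$ is $t$-exact. Once that is established, the first equivalence is formal, and the second will drop out by truncating $L$ itself.

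First I would verify $t$-exactness of $\ph \otimes L$. Right $t$-exactness (preservation of $\scr C_{\ge 0}$) is immediate from $L \in \scr C_{\ge 0}$ together with the standing compatibility $\scr C_{\ge 0} \otimes \scr C_{\ge 0} \subset \scr C_{\ge 0}$. Left $t$-exactness (preservation of $\scr C_{\le 0}$) is the point where strong dualizability enters: the identification $L \otimes Y \wequi \iHom(DL, Y)$ gives, for any $Y \in \scr C_{\le 0}$ and $Z \in \scr C_{\ge 1}$,
\[
\Map(Z, L \otimes Y) \wequi \Map(DL \otimes Z, Y),
\]
whose target is contractible because $DL \otimes Z \in \scr C_{\ge 1}$ by $DL \in \scr C_{\ge 0}$ and compatibility.

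Given $t$-exactness, $\ph \otimes L$ commutes with all truncations $\tau_{\ge n}$ and $\tau_{\le n}$, hence with each $\pi_i$. This immediately yields $\pi_i(X \otimes L) \wequi \pi_i(X) \otimes L$, where the right-hand side lies in $\scr C^\heart$ by applying the same $t$-exactness to $\pi_i(X) \in \scr C^\heart$. That establishes the first equivalence.

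For the second equivalence I would truncate $L$ itself. The fiber sequence $\tau_{\ge 1} L \to L \to \pi_0(L)$, tensored with $\pi_i(X) \in \scr C^\heart$, produces a fiber sequence in $\scr C$ whose first term lies in $\scr C_{\ge 1}$ by compatibility. Taking $\pi_0$ gives $\pi_0(\pi_i(X) \otimes L) \wequi \pi_0(\pi_i(X) \otimes \pi_0(L))$; the left-hand side is just $\pi_i(X) \otimes L$ because that tensor already sits in the heart, and the right-hand side is $\pi_i(X) \otimes^\heart \pi_0(L)$ by the definition of the heart tensor. The main conceptual obstacle is recognizing that the joint hypothesis $L, DL \in \scr C_{\ge 0}$ is precisely what upgrades $\ph \otimes L$ from right $t$-exact to fully $t$-exact; everything else is routine manipulation with truncations.
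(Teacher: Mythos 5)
Your proposal is correct and follows essentially the same route as the paper: both deduce $t$-exactness of $\ph\otimes L$ from $L, DL \in \scr C_{\ge 0}$ (your duality argument $\Map(Z, L\otimes Y)\wequi \Map(DL\otimes Z, Y)$ is exactly the implicit content of the paper's one-line claim that $\scr C_{\le 0}\otimes L\subset \scr C_{\le 0}$), and then identify $\pi_i(X)\otimes L$ with the heart tensor product by comparing with $\pi_0(L)$. Your truncation of $L$ via $\tau_{\ge 1}L \to L \to \pi_0(L)$ just spells out the standard fact the paper invokes tacitly, so there is nothing to change.
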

\begin{proof}
By assumption we have $\scr C_{\ge 0} \otimes L \subset \scr C_{\ge 0}$.
The same holds for $DL$, which implies $\scr C_{\le 0} \otimes L \subset \scr C_{\le 0}$.
In other words, $\otimes L: \scr C \to \scr C$ is $t$-exact, and hence $\pi_i(X \otimes L) \wequi \pi_i(X) \otimes L$.
Being in the heart $\scr C^\heart$, 
the latter tensor product is equivalent to $\pi_i(X) \otimes^\heart \pi_0(L)$.
\end{proof}

Let us quickly verify that $\pi_0(\1/(x_1, \dots, x_n))$ is indeed an idempotent algebra in $\scr C^\heart$.

\begin{lemma} 
\label{lemm:produce-idempotent}
Let $L_1, \dots, L_n \in \scr C_{\ge 0}$ and $x_i: L_i \to \1$.
Then $R = \pi_0(\1/(x_1, \dots, x_n))$ defines an idempotent object of $\CAlg(\scr C^\heart)$ and the 
multiplication maps $\pi_0(L_i) \otimes^\heart R \xrightarrow{x_i} R$ are null.
\end{lemma}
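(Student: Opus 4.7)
The plan is to show that the canonical map $A := \pi_0(\1) \to \pi_0(\1/(x_1, \dots, x_n)) =: R$ induced by the unit map $\1 \to \1/(x_1, \dots, x_n)$ is a surjection whose kernel is the ideal $I_n \subseteq A$ defined as the sum of the images of $\pi_0(x_i) \colon \pi_0(L_i) \to A$ for $i = 1, \dots, n$. Since every object of $\scr C^\heart$ is an $A$-module and every morphism is $A$-linear, each such image is automatically a sub-$A$-module of $A$, so $I_n$ is an ideal and the identification $R \cong A/I_n$ then equips $R$ with a canonical commutative $A$-algebra structure. With this identification in hand, both claims of the lemma reduce to standard observations about quotients of the unit by an ideal.

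The identification proceeds by induction on $n$, writing $R_k := \pi_0(\1/(x_1, \dots, x_k))$. The base case $R_0 = A$ is trivial. For the step, tensoring the cofiber sequence $L_k \xrightarrow{x_k} \1 \to \cof(x_k)$ with $\1/(x_1, \dots, x_{k-1})$ produces a cofiber sequence
\[ L_k \otimes \1/(x_1, \dots, x_{k-1}) \to \1/(x_1, \dots, x_{k-1}) \to \1/(x_1, \dots, x_k). \]
All three terms are connective, so applying $\pi_0$ (right exact, and symmetric monoidal on $\scr C_{\ge 0}$ by compatibility of the $t$-structure with the tensor product) yields the exact sequence
\[ \pi_0(L_k) \otimes^\heart R_{k-1} \xrightarrow{\pi_0(x_k) \otimes \id} R_{k-1} \to R_k \to 0 \]
in $\scr C^\heart$. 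Using the inductive identification $R_{k-1} \cong A/I_{k-1}$, the image of the leftmost map coincides with $(I_{k-1} + \operatorname{image}(\pi_0(x_k)))/I_{k-1}$ inside $A/I_{k-1}$, so $R_k \cong A/I_k$.

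Granted this, the two claims become immediate. Tensoring the exact sequence $I_n \to A \to R \to 0$ with $R$ and using right exactness of $\otimes^\heart$ produces $I_n \otimes^\heart R \to R \to R \otimes^\heart R \to 0$, and the first arrow realizes the $I_n$-action on $R = A/I_n$, which vanishes. Hence the unit $R \to R \otimes^\heart R$ is an isomorphism, and so is the multiplication (being a retraction of the unit), establishing idempotency. For the nullness of the multiplication maps, the composite $\pi_0(L_i) \otimes^\heart R \xrightarrow{\pi_0(x_i) \otimes \id} A \otimes^\heart R \cong R$ has image contained in $\operatorname{image}(\pi_0(x_i)) \cdot R \subseteq I_n \cdot R = 0$. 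The only substantive point in the argument is the inductive identification $R \cong A/I_n$; the single nontrivial input is that $\pi_0 \colon \scr C_{\ge 0} \to \scr C^\heart$ is symmetric monoidal.
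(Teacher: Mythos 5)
Your proof is correct, and it rests on the same two inputs as the paper's argument --- induction on $n$ via the cofiber sequence $L_k \otimes \1/(x_1,\dots,x_{k-1}) \to \1/(x_1,\dots,x_{k-1}) \to \1/(x_1,\dots,x_k)$, together with the fact that $\pi_0\colon \scr C_{\ge 0} \to \scr C^\heart$ is right exact and symmetric monoidal (this is where compatibility of the $t$-structure with the tensor product enters) --- but it organizes them differently. The paper never computes $R$: it propagates the abstract property ``idempotent algebra on which $x$ acts by zero'' through the induction by a diagram chase on a grid of cofiber sequences, using \cite[Proposition 4.8.2.9]{lurie-ha} to recognize idempotency via the unit map. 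You instead identify $R_k \cong \pi_0(\1)/I_k$ outright, where $I_k$ is the ideal generated by the images of $\pi_0(x_1),\dots,\pi_0(x_k)$, after which both assertions become the standard facts that a quotient of the unit by an ideal is idempotent and is killed by that ideal. Your route buys an explicit description of $R$ that the paper does not record (and which is consistent with, e.g., its use of $\pi_0(\MGL) \wequi \pi_0(\1/\eta)$). Two small points deserve a sentence each in a final write-up: the vanishing of $I_n \otimes^\heart R \to R$ (and likewise of $\pi_0(L_i) \otimes^\heart R \to R$) should be justified by noting that $I_n \otimes^\heart A \to I_n \otimes^\heart R$ is an epimorphism by right exactness, so the image in question is that of the composite $I_n \to A \to A/I_n$, which is zero; and the statement that the unit map $R \to R \otimes^\heart R$ being an isomorphism endows $R$ with a (unique) idempotent commutative algebra structure is precisely the cited criterion of Lurie, which is worth invoking explicitly rather than speaking of ``the multiplication'' before the algebra structure has been produced.
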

\begin{proof}
Recall that idempotent commutative algebras in $\scr C^\heart$ are the same as maps $\pi_0(\1) \to A \in \scr C^\heart$ 
such that the induced map $A \to A \otimes^\heart A$ is an isomorphism \cite[Proposition 4.8.2.9]{lurie-ha}.
Note that 
\[ 
\pi_0(\1/(x_1, \dots, x_n)) \wequi \pi_0(\pi_0(\1/(x_1, \dots, x_{n-1}))/x_n) 
\]
More generally, 
let us prove that if $\pi_0(\1) \to A \in \scr C^\heart$ is an idempotent algebra and $L \in \scr C_{\ge 0}$, 
$x: L \to \1$, 
then $\pi_0(A/x)$ is also an idempotent algebra on which multiplication by $x$ is null.
Consider the commutative diagram of cofiber sequences
\begin{equation*}
\begin{CD}
L \otimes A \otimes A/x @>e>> A \otimes A/x @>u>> A/x \otimes A/x \\
@AdAA                           @AbAA                @AAA         \\
L \otimes A \otimes A   @>c>> A \otimes A   @>a>> A/x \otimes A
\end{CD}
\end{equation*}
Here $c$ and $e$ ``multiply $L$ into the left factor $A$'', and all the other maps are the canonical projections.
Since $A$ is idempotent, 
$\pi_0(A \otimes A) \wequi A$ and $\pi_0(A \otimes A/x) \wequi \pi_0(A/x) \wequi \pi_0(A/x \otimes A)$.
Under these identifications we have $\pi_0(a) = \pi_0(b)$ and so $\pi_0(ed) = \pi_0(bc) = \pi_0(ac) = 0$.
Since $\pi_0(d)$ is an epi we deduce $\pi_0(e) = 0$, 
and hence $\pi_0(u)$ is an isomorphism.
This concludes the proof since, 
under our identifications, 
$\pi_0(e)$ is multiplication by $x$ on $\pi_0(A/x)$ and $\pi_0(u)$ is $\pi_0(A/x)\to\pi_0(A/x)\otimes^\heart\pi_0(A/x)$.
\end{proof}

We can now identify $x$-completions as $E$-nilpotent completions for an appropriate $E$.

\begin{proof}[Proof of Theorem \ref{thm:1/x-comp}.]
Lemma \ref{lemm:produce-idempotent} shows $R_n = \pi_0(\1/(x_1, \dots, x_n))$ is idempotent.

\emph{Step 1}: The map $R_n \otimes L_i \xrightarrow{x_i} R_n$ is null.
Indeed, by Lemma \ref{lemm:dualizable-pi}, 
we have $R_n \otimes L_i \wequi R_n \otimes^\heart \pi_0(L_i)$, 
and so this follows from Lemma \ref{lemm:produce-idempotent}.

\emph{Step 2}: We show the homotopy objects of $X/(x_1^{e_1}, \dots, x_n^{e_n})$ are strongly $R_n$-nilpotent 
for all $e_i \ge 1$.
By an induction argument, 
using the octahedral axiom, 
$X/x^m$ is a finite extension of copies of $X/x$.
Hence each $X/(x_1^{e_1}, \dots, x_n^{e_n})$ is a finite extension of copies of $X/(x_1, \dots, x_n)$;
thus we may assume $e_i=1$.
By induction on $n$ and Lemma \ref{lemm:dualizable-pi}, 
together with Lemma \ref{lemm:strongly-R-nilp-basics}(1), 
it suffices to show that if $M \in \scr C^\heart$ is $R_i$-nilpotent, 
then both the kernel and cokernel of 
\[ M \otimes^\heart \pi_0(L_{i+1}) \xrightarrow{x_{i+1}} M \] 
are $R_{i+1}$-nilpotent.
The proof given in \cite[Lemma 7.2.10]{mantovani2018localizations} goes through unchanged in our setting.

\emph{Step 3}: We show that 
\[ 
\{\tau_{\le m} X/(x_1^{e_1}, \dots, x_n^{e_n}) \}_{e_1, \dots, e_n;m} 
\] 
is a strongly $R_n$-nilpotent resolution of $X$.
Since we assume $X$ is connected, 
step 2 shows 
\[
\tau_{\le m} X/(x_1^{e_1}, \dots, x_n^{e_n})
\] 
is bounded with strongly $R_n$-nilpotent homotopy objects.
Owing to Lemma \ref{lemm:strongly-R-nilp-basics}(2) it is in fact strongly $R_n$-nilpotent.
We thus need to show that if $Y$ is strongly $R_n$-nilpotent, 
then 
\[ 
\colim \map(\tau_{\le m} X/(x_1^{e_1}, \dots, x_n^{e_n}), Y) 
\wequi \map(X, Y) 
\]
Since $Y$ is bounded above, we may remove $\tau_{\le m}$ in the above expression without changing the colimit.
We may assume that $Y$ is an $R_n$-module in $\scr C^\heart$.
By step 1 the map $L_i \otimes Y \to Y$ is null, 
and so the claim follows from Lemma \ref{lemm:x-comp-against-null}.

\emph{Conclusion of proof}:
By left completeness we have 
\[ 
X_{x_1, \dots, x_n}^\comp 
\wequi \lim_{e_1, \dots, e_n; m} \tau_{\le m} X/(x_1^{e_1}, \dots, x_n^{e_n}) 
\]
According to step 3, 
this is the limit of a strongly $R_n$-nilpotent resolution of $X$, 
which coincides with $X_{R_n}^\comp$ by Remark \ref{rmk:ampli}.
\end{proof}

\section{Rigidity for stable motivic homotopy of henselian local schemes} 
\label{sec:rigidity}

Given a presentably symmetric monoidal stable $\infty$-category $\scr C$ and a morphism 
$a: L \to \1$ with $L$ strongly dualizable, 
we denote by $\scr C_a^\comp$ the $a$-completion; 
that is, 
the localization at maps which become an equivalence after $\otimes \cof(a)$.
We refer to \cite[\S2.1]{bachmann-SHet}, \cite[\S2.5]{bachmann-eta} for more details; 
in particular, 
the $a$-completion of $X$ is given by the object $X_a^\comp$ from the previous section.

Given a family of objects $\scr G \subset \scr C$ (which for us will always be bigraded spheres $\Sigma^{**} \1$), 
we write $\scr C^\cell$ for the localizing subcategory generated by $\scr G$.
Noting that $\scr C_a^\comp$ is equivalent to the localizing tensor ideal generated by $\cof(a)$, 
by e.g., \cite[Example 2.3]{bachmann-SHet}, 
we see that if $L \in \scr G$ then these two operations commute, 
and so we shall write 
\[ 
\scr C_a^{\comp\cell} := (\scr C_a^\comp)^\cell \wequi (\scr C^\cell)_a^\comp 
\]

Recall the element $h := 1 + \lra{-1} \in \pi_{0,0}(\1)$, 
where $-\lra{-1}$ is the switch map on $\Gm \wedge \Gm$, 
and the element $\rho := [-1] \in \pi_{-1,-1}(\1)$ corresponding to $-1 \in \scr O^\times$.

\begin{proposition} 
\label{prop:rigidity}
Suppose $X$ is a henselian local scheme and essentially smooth over a Dedekind scheme.
Write $i:x \to X$ for the inclusion of the closed point and let $n \in \Z$.
\begin{enumerate}
\item If $1/n \in X$ then $i^*: \SH(X)_{n}^{\comp\cell} \to \SH(x)_{n}^{\comp\cell}$ is an equivalence.
\item If $1/2n \in X$ then $i^*: \SH(X)_{nh}^{\comp\cell} \to \SH(x)_{nh}^{\comp\cell}$ is an equivalence.
\item $i^*: \SH(X)[\rho^{-1}]^\cell \to \SH(x)[\rho^{-1}]^\cell$ is an equivalence.
\end{enumerate}
\end{proposition}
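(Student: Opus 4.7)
The plan is to reduce all three statements to a rigidity assertion for the bigraded homotopy of the motivic sphere. Since both $\SH(X)^{\cell}$ and $\SH(x)^{\cell}$ are presentable, generated under colimits by the bigraded spheres $S^{p,q}$ (a property preserved by the relevant completion or localization), and $i^*$ is a colimit-preserving symmetric monoidal functor with $i^*\1_X \wequi \1_x$ sending generators to generators, essential surjectivity onto the cellular target is automatic. Fully faithfulness on generators then reduces to showing that the induced map
\[
\pi_{**}(\1_X) \to \pi_{**}(\1_x)
\]
is an isomorphism of bigraded abelian groups after the corresponding operation. I would also check here that the completion/localization operations commute with $i^*$, which is immediate from the fact that each is given by tensoring with a cellular object like $\cof(n)$ or by a cellular localization.

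For parts (1) and (2), I would first reduce to a prime $n=\ell$. Then Theorem \ref{thm:HFp-completion}(2), applicable because $1/\ell \in X$, identifies the $\ell$-completion of any connective spectrum with its $\mathrm{H}\F_\ell$-nilpotent completion. The resulting $\mathrm{H}\F_\ell$-based motivic Adams spectral sequence has $E_2$-page governed by mod $\ell$ motivic cohomology, so rigidity for mod $\ell$ motivic cohomology of essentially smooth henselian pairs over a Dedekind base with $\ell$ invertible --- available via the Beilinson--Lichtenbaum comparison from Gabber's rigidity for étale cohomology, together with Spitzweck's motivic cohomology over Dedekind domains --- delivers an isomorphism at $E_2$. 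Left completeness of the homotopy $t$-structure (established in \S\ref{sec:nilpotent-completion}) then gives convergence. For part (2) the additional $h$-completion is handled analogously, with the hypothesis $1/2 \in X$ allowing the requisite Witt-theoretic rigidity to be invoked to control the extra degrees of freedom introduced by $h$.

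For part (3), inverting $\rho$ identifies $\SH(-)[\rho^{-1}]^{\cell}$ with a cellular version of the category of real-étale sheaves of spectra. Since the real spectrum of a henselian local ring coincides with the real spectrum of its residue field, the induced functor on real-étale topoi is an equivalence, and the claim follows by transporting this equivalence through the identification.

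The principal obstacle is in parts (1) and (2): one must secure the mod $\ell$ motivic cohomology rigidity input in the essentially smooth over Dedekind setting rather than merely the classical field setting, and, for part (2), verify that $h$-completion interlocks cleanly with both the cellular structure and the $\mathrm{H}\F_\ell$-based Adams spectral sequence. The remaining pieces --- commuting $i^*$ with completion or localization, convergence of the Adams spectral sequence, and the real-étale identification in (3) --- are essentially formal once the framework of \S\ref{sec:nilpotent-completion} is in hand.
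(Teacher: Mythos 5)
Your overall reduction (essential surjectivity is automatic; fully faithfulness reduces to a $\pi_{**}$-isomorphism for the appropriate quotient/localization of the sphere) and your treatment of part (3) match the paper. But there is a genuine gap in parts (1) and (2): you identify the $\ell$-completion with the $\mathrm{H}\F_\ell$-nilpotent completion, whereas Theorem \ref{thm:HFp-completion}(2) identifies the $\mathrm{H}\F_\ell$-nilpotent completion with the \emph{$(\eta,\ell)$-completion}. These differ precisely by the $\eta$-periodic part, and your argument never addresses it. The paper's proof splits the verification into an $\eta$-complete piece and an $\eta$-periodic piece (using that a map is an equivalence iff it is one after $\eta$-completion and after $\eta$-periodization). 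The $\eta$-complete piece is the cobar/motivic-cohomology argument you describe --- and note the paper avoids any identification of an Adams $E_2$-page by comparing the cobar towers termwise, reducing to $t=1$ via the projection formula of Lemma \ref{lemm:duality-yoga}. The $\eta$-periodic piece is a separate, nontrivial input: for $1/2\in X$ it is rigidity for Witt rings of henselian local rings (via \cite[Proposition 5.2]{bachmann-etaZ} and Jacobson's lemma), and for odd $n$ one uses that $n$-complete objects are $2$-periodic together with $\SH[\eta^{-1},1/2]\wequi\SH[\rho^{-1},1/2]$ to reduce to part (3). Without this step your proof of (1) is incomplete.

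Relatedly, your handling of part (2) is too vague to carry the argument. The clean route is: on the $\eta$-complete side, $\pi_0(\1/(nh,\eta))\wequi\pi_0(\1/(2n,\eta))$ (since $h=2+\eta\rho$), so Theorem \ref{thm:1/x-comp} gives $\1_{nh,\eta}^\comp\wequi\1_{2n,\eta}^\comp$ and (2) reduces to (1); on the $\eta$-periodic side, $\eta$-periodic objects are $h$-torsion (Lemma \ref{lemm:eta-basics}), so the $h$-completion is invisible there and one again invokes the Witt-ring rigidity available because $1/2\in X$. The ``Witt-theoretic rigidity controlling the extra degrees of freedom introduced by $h$'' you gesture at is thus not what handles the $h$-completion itself; it handles the $\eta$-periodic localization, which is exactly the piece missing from your treatment of (1) as well.
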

Many proofs in the sequel will follow the pattern of this one.
We spell out many details here, which are suppressed in the following proofs.
\begin{proof}
If $S$ is a quasi-compact quasi-separated scheme, 
e.g., affine, 
the category $\SH(S)$ is compactly generated by suspension spectra of finitely presented smooth $S$-schemes 
\cite[Proposition C.12]{hoyois2015quadratic}.
Thus $\SH(S)^\cell$ is compactly generated by the spheres, 
and for every $a \in \pi_{**}(\1_S)$, 
the category $\SH(S)_a^{\comp\cell}$ is compactly generated by $\Sigma^{**}\1/a$.
Now let $f: S' \to S$ be a morphism, where $S'$ is also quasi-compact quasi-separated.
We use $f^*$ to transport elements of $\pi_{**}(\1_S)$ to $\pi_{**}(\1_{S'})$, 
and when no confusion can arise, 
we denote them by the same letter.
Thus, for example, we set 
\[ 
\SH(S')_a^\comp := \SH(S')_{f^*a}^\comp 
\]
The functor $f^*: \SH(S)_a^{\comp\cell} \to \SH(S')_a^{\comp\cell}$ preserves colimits and the compact generator. 
Therefore it admits a right adjoint $f_*$ preserving colimits.
This implies that $f^*$ is fully faithful if and only if the map $\1 \to f_*f^*\1 \in \SH(S)_a^{\comp\cell}$ is an equivalence,  
see e.g., \cite[Lemma 22]{bachmann-hurewicz}; 
in this case, the functor is an equivalence since its essential image will be a localizing subcategory containing the generator.

We can simplify this condition further.
By $a$-completeness and Lemma \ref{lemm:duality-yoga} below, 
it follows that $\1 \to f_*f^*\1$ is an equivalence if and only if $\1/a \to f_*f^*(\1/a)$ is an equivalence, 
i.e., if and only if 
\[ 
\pi_{**}(\1_S/a) \wequi \pi_{**}(\1_{S'}/a)
\]

If $b \in \pi_{**}(\1)$, 
then in our compactly generated situations the $b$-periodization $\mathcal{E}[b^{-1}]$ is given by the colimit 
\[ 
\mathcal{E}[b^{-1}] = \colim \left(\mathcal{E} \xrightarrow{b} \Sigma^{**} \mathcal{E} \xrightarrow{b} \dots \right)
\]
Since $f_*$ preserves colimits it commutes with $b$-periodization by Lemma \ref{lemm:duality-yoga}.
We shall make use of the fact that a map is an equivalence if and only if it is an equivalence after $b$-periodization 
and $b$-completion, 
see e.g., \cite[Lemma 2.16]{bachmann-eta}.
Thus to prove fully faithfulness it would also be sufficient, as well as necessary, to prove 
\[ 
\pi_{**}(\1_S/(a,b)) 
\wequi 
\pi_{**}(\1_{S'}/(a,b)) 
\quad\text{and}\quad 
\pi_{**}(\1_S[b^{-1}]/a) 
\wequi 
\pi_{**}(\1_{S'}[b^{-1}]/a)
\]
We will use many different variants of these observations in the sequel.

(0)
We claim the functor
\[ 
\SH(X)[\eta^{-1}] \to \SH(x)[\eta^{-1}] 
\] 
is an equivalence provided $1/2 \in X$, and that 
\[  
\SH(X)[\eta^{-1},1/2] \to \SH(x)[\eta^{-1},1/2] 
\] 
is an equivalence without any assumptions on $X$.
For the first claim, 
by the above remarks it suffices to prove that $\pi_{**}(\1[\eta^{-1}])$ satisfies the required rigidity, 
which via \cite[Proposition 5.2]{bachmann-etaZ} reduces to the same statement for the Witt ring $W(\ph)$.
This is true by \cite[Lemma 4.1]{jacobson2018cohomological}.
Since $\SH(S)[\eta^{-1}, 1/2] \wequi \SH(S)[\rho^{-1}, 1/2]$, 
see Lemma \ref{lemm:eta-basics}, 
the second claim reduces to (3).

(1)
It suffices to establish an isomorphism on $\eta$-periodization and $\eta$-completion.
We first treat the $\eta$-complete case, 
i.e.,  
we need to show that $\1 \to i_*i^* \1 \in \SH_{n,\eta}^{\comp\cell}$ is an equivalence.
By Theorem \ref{thm:HFp-completion}(2) with $\ell=n$, 
we have 
\[ E_{n,\eta}^\comp \wequi \lim_\Delta E \wedge H\Z/n^{\wedge \bullet+1} \] 
for any connective $E$ in $\SH(S)$.
The cellularization functor $\SH(S) \to \SH(S)^\cell$ preserves limits and hence $(n,\eta)$-completions.
Moreover, 
$\mathrm{H}\Z/n \in \SH(S)^\cell$ if $1/n \in S$ by \cite[Corollary 10.4]{spitzweck2012commutative}.
Hence the above formula for $E_{n,\eta}^\comp$ also makes sense, 
and is true, 
in $\SH(S)^\cell$.
Thus we need to show the map $\mathrm{H}\Z/2^{\wedge t} \to i_*(\mathrm{H}\Z/2^{\wedge t}) \in \SH(X)^\cell$ is an equivalence, 
for $t \ge 1$.
Lemma \ref{lemm:duality-yoga} below implies that $i_*(E \wedge i^*F) \wequi i_*(E) \wedge F$, 
for any $E \in \SH(x)$, $F \in \SH(X)^\cell$.
In this way we reduce to $t=1$; 
i.e., 
it suffices to show 
\[ 
\pi_{**}(\mathrm{H}\Z/n_X) \wequi \pi_{**}(\mathrm{H}\Z/n_x)
\]
Owing to \cite[Theorem 3.9]{spitzweck2012commutative}, 
$\pi_{**}(\mathrm{H}\Z/n_S)$ is given by the Zariski cohomology of $S$ with coefficients in a truncation of the 
étale cohomology of $\mu_n^{\otimes \ph}$.
When $S=X$ or $S=x$ the scheme $S$ is Zariski local, so $\pi_{**}(\mathrm{H}\Z/n_S)$ is simply given by certain 
étale cohomology groups of $S$ with coefficients in $\mu_n^{\otimes \ph}$.
The rigidity result follows now from \cite[Theorem 1]{gabber1994affine}.

Next we treat the $\eta$-periodic case.
If $n$ is even then $1/2 \in X$ and so the result follows from (0).
If $n$ is odd then $n$-complete objects are $2$-periodic, and the result also follows from (0).

(2) Again it suffices to prove that we have an isomorphism after $\eta$-completion and $\eta$-periodization; 
(0) handles the $\eta$-periodic case.
For the $\eta$-complete case, 
we use that $\pi_0(\1/(nh,\eta)) \wequi \pi_0(1/(2n,\eta))$, 
see Lemma \ref{lemm:eta-basics} below, 
whence $\1_{nh,\eta}^\comp \wequi \1_{2n,\eta}^\comp$ by Theorem \ref{thm:1/x-comp}; 
this reduces to (1).

(3)
By \cite[Theorem 35]{bachmann-real-etale} we have $\SH(S)[\rho^{-1}] \wequi \SH(S_\ret)$, 
where the right hand side denotes hypersheaves on the small real étale site of $S$.
In this situation we have a natural $t$-structure, 
see e.g., \cite[\S2.2]{bachmann-SHet}, 
such that the map $\1_\ret \to \mathrm{H}_\ret\Z$ is a morphism of connective ring spectra inducing an isomorphism on $\pi_0$
--- where by $\mathrm{H}_\ret\Z$ we mean the constant sheaf of spectra.
Hence, 
applying Theorem \ref{thm:E-vs-pi0E-completion} in this situation, 
and repeating the above discussion using that $\mathrm{H}_\ret\Z$ is cellular and stable under base change, 
essentially by definition, 
we find that in order to prove $\1 \to i_*i^* \1 \in \SH(S)[\rho^{-1}]^\cell$ is an equivalence it suffices to prove
$\mathrm{H}_\ret \Z \to i_* \mathrm{H}_\ret \Z$ is an equivalence.
In other words, 
we need to show 
\[
H^*_\ret(X, \Z) \wequi H^*_\ret(x, \Z)
\]
Since the real étale and Zariski cohomological dimension coincide \cite[Theorem 7.6]{real-and-etale-cohomology}, 
we are reduced to $H^0_\ret$, 
which follows from \cite[Propositions II.2.2, II.2.4]{andradas2012constructible}.
\end{proof}

\begin{lemma} 
\label{lemm:duality-yoga}
Let $F: \scr C \to \scr D$ be a symmetric monoidal functor between symmetric monoidal categories admitting a right adjoint $G$, 
and let $x: A \to \1$ be a morphism in $\scr C$ with $A$ strongly dualizable.
Then for $X \in \scr D$, 
there is a natural equivalence $G(X \otimes FA) \wequi G(X) \otimes A$, 
and under this equivalence the map 
\[ 
G(X \otimes FA) \xrightarrow{G(\id \otimes Fx)} G(X) 
\] 
corresponds to 
\[ 
GX \otimes A \xrightarrow{x} GX 
\]

Suppose that $\scr C, \scr D$ are presentably symmetric monoidal stable $\infty$-categories and $G$ preserves colimits.
Write $\scr C'$ for the localizing subcategory of $\scr C$ generated by strongly dualizable objects.
Then the above result also holds for any $A \in \scr C'$.
\end{lemma}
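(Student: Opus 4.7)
The plan is to first produce a natural comparison map $\mu_A \colon GX \otimes A \to G(X \otimes FA)$ and then show it is an equivalence under the stated hypotheses. Define $\mu_A$ as the adjoint of the composite
\[
F(GX \otimes A) \simeq FGX \otimes FA \xrightarrow{\varepsilon \otimes \id} X \otimes FA,
\]
using that $F$ is symmetric monoidal and $\varepsilon \colon FG \to \id$ is the counit of the adjunction. By construction, $\mu_{(-)}$ is a natural transformation of functors $\scr C \to \scr C$ in the variable $A$, and $\mu_\1$ is the identity.

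For $A$ strongly dualizable, verify $\mu_A$ is an equivalence by a Yoneda argument: for any $Y \in \scr C$,
\[
\Map(Y, G(X \otimes FA)) \simeq \Map(FY, X \otimes FA) \simeq \Map(FY \otimes F(DA), X)
\]
\[
\simeq \Map(F(Y \otimes DA), X) \simeq \Map(Y \otimes DA, GX) \simeq \Map(Y, GX \otimes A),
\]
where the second equivalence uses that $FA$ is strongly dualizable with dual $F(DA)$ (since $F$ is symmetric monoidal) and the last uses that $DA$ is strongly dualizable with dual $A$. One checks that this chain of equivalences agrees with the map induced by $\mu_A$. The compatibility with $x$ is then immediate from the naturality of $\mu$: the map $G(\id \otimes Fx) \colon G(X \otimes FA) \to G(X \otimes F\1) \simeq GX$ fits into a commutative square with $\id_{GX} \otimes x \colon GX \otimes A \to GX \otimes \1 \simeq GX$ via $\mu_A$ and $\mu_\1 = \id$.

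For the extension to $\scr C'$, observe that both functors $A \mapsto G(X \otimes FA)$ and $A \mapsto GX \otimes A$ preserve colimits: the first because $F$ is a left adjoint, $X \otimes -$ preserves colimits by presentability of $\scr D$, and $G$ preserves colimits by hypothesis; the second directly by presentability of $\scr C$. Hence the full subcategory of $\scr C$ on those $A$ for which $\mu_A$ is an equivalence is a localizing subcategory, and contains the strongly dualizable objects, so it contains $\scr C'$. The compatibility with $x \colon A \to \1$ for $A \in \scr C'$ follows again from naturality.

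The main technical point is ensuring that the comparison $\mu_A$ is a genuinely natural transformation of $\infty$-functors in $A$, so that the extension by colimits is legitimate; this is handled by defining $\mu$ via the adjunction (which is functorial by construction) rather than by pointwise Yoneda identifications. Everything else is formal from the existence of duals, the symmetric monoidality of $F$, and the colimit-preservation of $G$.
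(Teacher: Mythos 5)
Your proof is correct and follows essentially the same route as the paper: your $\mu_A$ (adjoint to $FGX\otimes FA\xrightarrow{\varepsilon\otimes\id}X\otimes FA$) agrees with the paper's map built from the lax monoidal structure of $G$ and the unit $A\to GFA$, the Yoneda/duality argument for strongly dualizable $A$ is the same, and the extension to $\scr C'$ via the localizing subcategory of $A$'s where $\mu_A$ is an equivalence is identical. No issues.
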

\begin{proof}
Since $F$ symmetric monoidal, 
$G$ is lax symmetric monoidal, 
and there is a canonical map $GX \otimes GFA \to G(X \otimes FA)$.
Composing with the unit $A \to GFA$, 
we obtain a natural map $GX \otimes A \to G(X \otimes FA)$, 
which is an equivalence by the Yoneda lemma.
Since this equivalence is natural in $A$ as well, the claim about $x$ also follows.

For the second statement, 
the subcategory comprised of $A \in \scr C$ for which the natural transformation $GX \otimes A \to G(X \otimes FA)$ 
is an equivalence for all $X \in \scr D$ is localizing since $G$ preserves colimits and it contains all strongly 
dualizable objects by the first part, 
hence all of $\scr C'$.
\end{proof}

\begin{lemma} 
\label{lemm:eta-basics}
In $\pi_{*,*}(\1)$ we have the relations
\[
\eta h = 0, h = 2 + \eta \rho, h \rho^2 = 0
\]
It follows that 
$$
\SH(S)[1/2,1/\eta] \wequi \SH(S)[1/2,1/\rho]
$$
\end{lemma}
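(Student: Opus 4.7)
The plan is to first verify the three relations in $\pi_{*,*}(\1)$, and then deduce the category equivalence as a formal consequence. Since $\eta$, $\rho$, and $h$ are defined on $\Spec(\Z)$ and pulled back along the structure map of $S$, it suffices to establish the relations universally; I would reduce further to a field such as $\Q$, where Morel's identification of the relevant part of $\pi_{*,*}(\1_k)$ with Milnor-Witt K-theory is available.

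For $\eta h = 0$ I would use the classical identity $\lra{-1}\eta = -\eta$, which reflects the invariance of the stable Hopf map under the twist on $\Gm \wedge \Gm$ together with the description of that twist in terms of $\lra{-1}$. Then $h\eta = \eta + \lra{-1}\eta = \eta - \eta = 0$. The relation $h = 2 + \eta \rho$ is immediate from the Milnor-Witt identity $\lra{a} = 1 + \eta [a]$ applied to $a = -1$, giving $\lra{-1} = 1 + \eta \rho$ and hence $h = 1 + \lra{-1} = 2 + \eta \rho$. Finally, for $h\rho^2 = 0$, I would apply the Milnor-Witt product formula $[ab] = [a] + [b] + \eta[a][b]$ to $a = b = -1$, using $[1] = 0$, to obtain $0 = 2\rho + \eta \rho^2$, i.e.\ $\eta \rho^2 = -2\rho$. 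Then $h\rho^2 = (2 + \eta\rho)\rho^2 = 2\rho^2 + \eta \rho \cdot \rho^2 = 2\rho^2 - 2\rho^2 = 0$.

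For the category equivalence, I would combine the relations after inverting $2$. Expanding $\eta h = 0$ via $h = 2 + \eta \rho$ gives $\eta^2 \rho = -2\eta$, so in $\SH(S)[1/2, 1/\eta]$ the element $\rho$ equals $-2\eta^{-1}$ and hence acts invertibly on spheres. Dually, expanding $h\rho^2 = 0$ gives $\eta \rho^3 = -2\rho^2$, so in $\SH(S)[1/2, 1/\rho]$ the element $\eta$ equals $-2\rho^{-1}$ and acts invertibly. Therefore both localizations agree with $\SH(S)[1/2, 1/\eta, 1/\rho]$, yielding the claimed equivalence.

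The main obstacle is locating and invoking the Milnor-Witt identities in the stated generality; I expect nothing deeper than reducing to a field via pullback, where Morel's computation is directly applicable. Once the three relations are in hand, the passage to the category equivalence is purely formal, using only the universal property of Bousfield localization at a self-map of the unit.
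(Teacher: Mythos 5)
Your algebraic manipulations are all correct and essentially identical to the paper's: the relation $h = 2 + \eta\rho$ is the Milnor--Witt identity $\lra{-1} = 1 + \eta[-1]$, the computation of $h\rho^2$ via the logarithm relation $[ab]=[a]+[b]+\eta[a][b]$ and $[1]=0$ is exactly the paper's calculation, and your derivation of the category equivalence (inverting $\eta$ forces $\rho = -2\eta^{-1}$ and vice versa, so both localizations coincide with $\SH(S)[1/2,1/\eta,1/\rho]$) is equivalent to the paper's observation that either localization kills $h$ and hence makes $\eta$ and $\rho$ mutually inverse up to $-1/2$.

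The gap is in your proposed justification that the Milnor--Witt relations hold in $\pi_{*,*}(\1_S)$ in the first place. You correctly note that it suffices to verify the relations universally, i.e.\ over $\Spec(\Z)$, since $\eta$, $\rho$, $h$ are pulled back from there. But you then propose to ``reduce further to a field such as $\Q$, where Morel's identification \dots is available.'' This reduction goes the wrong way: the pullback $\pi_{*,*}(\1_{\Spec\Z}) \to \pi_{*,*}(\1_\Q)$ is not known to be injective (its structure in degree $(0,0)$ is essentially the content of Theorem 1.4 of this paper, and even that is only a $2$-local statement over $\Z[1/2]$), so a relation verified over $\Q$ does not lift to a relation over $\Z$, let alone over a general $S$. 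The paper closes this by citing Druzhinin's theorem that all Milnor--Witt relations, including $\eta h = 0$ and the logarithm relation, hold in $\pi_{*,*}(\1_S)$ over general base schemes (together with a check that the paper's $h$ agrees with Druzhinin's). Your geometric sketch for $\eta h = 0$ --- invariance of the stable Hopf map under the twist on $\Gm\wedge\Gm$ --- is in fact the kind of argument that does work over any base, but as written you anchor it to Morel's field computation, which does not supply the relation in the required generality.
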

\begin{proof}
By \cite[Theorem 1.2]{druzhinin2018homomorphism} all the Milnor--Witt relations hold in $\pi_{*,*}(\1)$, 
including $\eta h = 0$.
Our definition of $h$ agrees with Druzhinin's by \cite[Lemma 3.10]{druzhinin2018homomorphism}.
We now compute 
\[ 
h\rho^2 = (2 + \eta[-1])[-1][-1] = 2[-1][-1] + ([1] - [-1] - [-1])[-1] = 0 
\] 
using the logarithm relation $[ab] = [a] + [b] + \eta[a][b]$ as well as $[1]=0$ which holds by definition.

For the last part, 
note that inverting either $\eta$ or $\rho$ kills $h$ (by the first or third relation), 
and hence makes $\eta$ and $\rho$ inverses of each other up to a factor of $-1/2$ (by the second relation).
\end{proof}

\begin{example} 
\label{ex:eta-periodic-h}
Suppose that $1/2 \in X$, where $X$ is henselian local over a Dedekind scheme.
Applying Proposition \ref{prop:rigidity}(2) with $n=1$ we learn that $\SH(X)_h^{\comp\cell} \to \SH(x)_h^{\comp\cell}$ 
is an equivalence.
By Lemma \ref{lemm:eta-basics} both the $\eta$-periodic and $\rho$-periodic objects are $h$-torsion. 
We conclude $\SH(X)_h^\comp[\eta^{-1}] \wequi \SH(X)[\eta^{-1}]$ and similarly for $\rho$.
Thus there is an equivalence 
\[ 
\SH(X)[\eta^{-1}]^\cell \wequi \SH(x)[\eta^{-1}]^\cell 
\] 
A similar equivalence holds for $\rho$.
With reference to Proposition \ref{prop:rigidity}, 
this shows (2) implies (3).
\end{example}

\begin{example} 
\label{ex:n-compl-strong}
We have $(E_{ab}^\comp)_a^\comp \wequi E_a^\comp$ since $ab$-periodic objects are $a$-periodic. 
Hence, in  Proposition \ref{prop:rigidity}, (2) implies (1).
\end{example}

\todo{removed false claim}

\section{Topological models for stable motivic homotopy of regular number rings} 
\label{sec:models}
We shall exhibit pullback squares describing $\SH(\mathcal{O}_{F}[1/\ell])_\ell^{\comp\cell}$ for suitable 
number fields $F$ and prime numbers $\ell$ in terms of $\SH(k)_\ell^{\comp\cell}$ for fields of the form 
$k=\C,\R,\F_{q}$.
To facilitate comparison with the work of Dwyer--Friedlander \cite{dwyer1994topological} we formally dualize 
our terminology and exhibit pushout squares in the opposite category.

\subsection{Setup}
Let $\ell$ be a prime (or more generally any integer, but we do not need or use this extra generality).
We shall use the notation $\ell' = \ell$ if $\ell$ is odd, and $\ell' = \ell h$ if $\ell=2$.
\begin{definition} \hfill
\begin{enumerate}
\item We write 
\[ \Premot_S \subset (\CAlg(\PrL)^\op)_{/\SH(S)^\cell} \] 
for the full subcategory comprised of functors $F\colon \SH(S)^\cell \to \scr C$, 
where $\scr C$ is generated under colimits by $F(\SH(S)^\cell)$ (or equivalently by $F(S^{p,q})$ for $p,q \in \Z$).
\item We denote by $M_{\ell'}$ the functor 
\[
\Schl \to \Premot_{\Z[1/\ell]}, \quad X \mapsto \SH(X)_{\ell'}^{\comp\cell}, 
\quad (f: X \to Y) \mapsto (f^*: \SH(Y)_{\ell'}^{\comp\cell} \to \SH(X)_{\ell'}^{\comp\cell})^\op 
\]
\end{enumerate}
\end{definition}
We also put $\Premot = \Premot_{\Z}$ and, by abuse of notation, $M(X) := M_0(X) = \SH(X)^\cell \in \Premot$.
Note that $\Premot_S = \Premot_{/M_0(S)}$ and $M_{\ell'}(X) = M(X)_{\ell'}^\comp$.
Next we clarify the meaning of colimits in $\Premot_S$.

\begin{lemma} 
\label{lemm:premot-colim}
Let $F: I \to \Premot_S$ be a diagram and write $F': I^\op \to \Cat_\infty$ for the underlying diagram of categories.
Then $\lim_{I^\op} F' \in \Cat_\infty$ is presentably symmetric monoidal and admits a natural functor from $\SH(S)^\cell$.
Let $\scr C$ denote its subcategory generated under colimits by the image of $\SH(S)^\cell$.
Then there is an equivalence $\colim_I F \wequi \scr C$.
\end{lemma}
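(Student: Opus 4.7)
The plan is to compute the colimit in $\Premot_S$ by first passing to a limit in $\CAlg(\PrL)_{\SH(S)^\cell/}$ and then restricting to the cellular part. Writing $F'\colon I^\op \to \CAlg(\PrL)_{\SH(S)^\cell/}$ for the diagram corresponding to $F$ (the opposite accounts for the $(-)^\op$ in the definition of $\Premot_S$), colimits in $\Premot_S$ correspond to limits of such diagrams.

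First I would recall that the forgetful functors $\CAlg(\PrL) \to \PrL \to \Cat_\infty$ preserve limits, by \cite[Corollary 3.2.2.5]{lurie-ha} and \cite[Proposition 5.5.3.13]{lurie-htt} respectively, so that the underlying $\infty$-category of the limit in $\CAlg(\PrL)_{\SH(S)^\cell/}$ is $\lim_{I^\op} F'$. This object thus inherits a canonical presentably symmetric monoidal structure, a colimit-preserving symmetric monoidal functor from $\SH(S)^\cell$, and compatible projections $\pi_i \colon \lim F' \to F'(i)$. Next I would verify that the localizing subcategory $\scr C \subset \lim F'$ generated by the image of $\SH(S)^\cell$ lifts to an object of $\Premot_S$: it is presentable as the localizing subcategory generated by the small set $\{F(S^{p,q})\}$, and closure under tensor product follows because the image of $\SH(S)^\cell$ contains the unit, is closed under tensor products, and $\otimes$ distributes over colimits in each variable. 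The maps $\scr C \to F'(i)$ obtained as $\pi_i$ composed with the inclusion $\scr C \hookrightarrow \lim F'$ furnish the candidate structure maps.

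The final step is the universal property. Given $G \in \Premot_S$ corresponding to $G \colon \SH(S)^\cell \to \scr D$, a compatible family of morphisms $F(i) \to G$ in $\Premot_S$ corresponds to a compatible system of colimit-preserving symmetric monoidal functors $\scr D \to F'(i)$ under $\SH(S)^\cell$, which by the universal property of the limit assembles into a single $\psi \colon \scr D \to \lim F'$. The key observation is that $\psi$ factors through $\scr C$: the full subcategory $\psi^{-1}(\scr C) \subset \scr D$ is localizing (since $\psi$ preserves colimits and $\scr C$ is closed under colimits and retracts in $\lim F'$) and contains $G(\SH(S)^\cell)$, hence equals $\scr D$ by the cellular-generation hypothesis on $G$. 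Uniqueness of the factorization follows from fully faithfulness of $\scr C \hookrightarrow \lim F'$, thereby exhibiting $\scr C$ as the desired colimit.

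I expect the main obstacle to be the clean bookkeeping around the opposite, slice, and under-categories, together with confirming that $\Cat_\infty$-level limits compute limits in $\CAlg(\PrL)_{\SH(S)^\cell/}$; once those standard facts are in place, the factorization through $\scr C$ is forced by colimit-preservation and the generation hypothesis.
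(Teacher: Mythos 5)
Your proof is correct and follows essentially the same route as the paper: compute the limit in $\CAlg(\PrL)_{\SH(S)^\cell/}$ using that the forgetful functor to $\Cat_\infty$ preserves limits, and then observe that any map from a cellularly generated $\scr D$ into $\lim_{I^\op}F'$ factors through $\scr C$ because the factorization holds on the generators and the functor preserves colimits. Your extra check that $\scr C$ inherits a presentably symmetric monoidal structure is left implicit in the paper but is a welcome addition.
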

\begin{proof}
The forgetful functor 
\[
\CAlg(\PrL)_{\SH(S)^\cell/} \to \Cat_\infty
\]
preserves limits \cite[Propositions 5.5.3.13, 1.2.13.8]{lurie-htt}, 
\cite[Corollary 3.2.2.5]{lurie-ha}, 
and hence the limit admits a canonical functor from $\SH(S)^\cell$.
For $\scr D \in \Premot$ we have 
\begin{gather*} 
\Map_{\Premot}(\scr C, \scr D) \wequi 
\Map_{\CAlg(\PrL)_{\SH(S)^\cell/}}(\scr D, \scr C) 
\subset 
\Map_{\CAlg(\PrL)_{\SH(S)^\cell/}}(\scr D, \lim_{I^\op} F') \\ 
\wequi 
\lim_{I^\op} \Map_{\CAlg(\PrL)_{\SH(S)^\cell/}}(F(\ph), \scr D) 
\wequi 
\lim_{I^\op} \Map_{\Premot}(F(\ph), \scr D)
\end{gather*} 
It remains to show the inclusion is an equivalence, 
i.e., 
every map $\scr D \to \lim_{I^\op} F'$ in $\CAlg(\PrL)_{\SH(S)^\cell/}$ factors through $\scr C$.
This holds for the generators, 
by assumption, 
so we are done.
\end{proof}

Next we reformulate and slightly extend our rigidity results from \S\ref{sec:rigidity}.

\begin{lemma}
\label{lemm:synpt-basics}
Let $\bar{x}$ be the spectrum of a separably closed field, 
$X \in \Schl$ an essentially smooth over a Dedekind domain, 
$\bar x \to X$ a map, 
and $y \in X$ a specialization of the image of $x$.
In $\Premot_{\Z[1/\ell]}$ there is a commutative diagram 
\begin{equation*}
\begin{tikzcd}
M(\bar x) \ar[d] \ar[r, "s"] & M(y) \ar[ld] \\
M(X)
\end{tikzcd}
\end{equation*}
Here the unlabelled maps are the canonical ones.
In fact, 
there is a family of such commutative diagrams, 
parametrized by the (non-empty) set $X_y^h \times_X \bar x$.
\end{lemma}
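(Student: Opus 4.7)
The plan is to construct the arrow $s$ from a chosen lift $\phi \in X_y^h \times_X \bar x$ using the rigidity equivalence of Proposition \ref{prop:rigidity}, and then check that the triangle commutes.

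First I would verify that $X_y^h \times_X \bar x$ is non-empty. Write $X_y^h$ as the cofiltered limit of affine Nisnevich neighborhoods $(U, u) \to (X, y)$ --- étale maps together with a lift $u$ of $y$ inducing $k(u) \cong k(y)$. Then $X_y^h \times_X \bar x = \lim_{(U,u)} (U \times_X \bar x)$. Each $U \times_X \bar x \to \bar x$ is étale, hence a disjoint union of copies of $\bar x$ since $\bar x$ is the spectrum of a separably closed field; it is moreover non-empty because $y$ being a specialization of the image of $\bar x$ forces every étale neighborhood of $y$ in $X$ to meet this image. A cofiltered limit of non-empty finite discrete sets is non-empty, yielding a lift $\phi$.

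Given $\phi$, the morphisms $\phi: \bar x \to X_y^h$ and $i: y \hookrightarrow X_y^h$ (closed immersion of the unique closed point) give pullbacks $\phi^*: \SH(X_y^h)^\cell \to \SH(\bar x)^\cell$ and $i^*: \SH(X_y^h)^\cell \to \SH(y)^\cell$, both morphisms in $\Premot_{\Z[1/\ell]}$. By Proposition \ref{prop:rigidity}, $i^*$ becomes an equivalence after $\ell'$-completion, so the composite $s := \phi^* \circ (i^*)^{-1}$ is well-defined at the completed level, producing the desired morphism $M(\bar x) \to M(y)$. Commutativity of the triangle then follows from the factorization $\bar x \xrightarrow{\phi} X_y^h \to X$ of the given map $\bar x \to X$: the composite $\SH(X)^\cell \to \SH(y)^\cell \xrightarrow{(i^*)^{-1}} \SH(X_y^h)^\cell \xrightarrow{\phi^*} \SH(\bar x)^\cell$ equals pullback along $X \leftarrow X_y^h \xleftarrow{\phi} \bar x$, matching the canonical map $M(\bar x) \to M(X)$.

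The main obstacle is reconciling the uncompleted notation $M(\ph)$ in the lemma's diagram with the fact that rigidity only yields an equivalence after $\ell'$-completion; the construction of $s$ lives most naturally at the completed level, and the arrow in $\Premot_{\Z[1/\ell]}$ is interpreted via the canonical completion structure maps. Varying $\phi$ over the non-empty set $X_y^h \times_X \bar x$ then produces the claimed family of commutative diagrams, one for each lift.
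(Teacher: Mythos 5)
Your proof is correct and follows essentially the same route as the paper: lift $\bar x \to X$ through the henselization $X_y^h$, invert the rigidity equivalence $M(y) \xrightarrow{\;\wequi\;} M(X_y^h)$ of Proposition \ref{prop:rigidity} (which indeed only holds after $\ell'$-completion, so $M$ here is $M_{\ell'}$), and read off commutativity from the factorization $\bar x \to X_y^h \to X$. The only difference is that you prove non-emptiness of $X_y^h \times_X \bar x$ directly as a cofiltered limit of non-empty finite fibers of étale neighborhoods, where the paper simply cites the Stacks Project; both are fine.
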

\begin{proof}
Let $X'$ be the henselization of $X$ along $y$.
By \cite[Tags 03HV, 07QM(1)]{stacks-project} the map $X' \to X$ hits the image of $\bar x$, 
and hence there exists a lift $s'$ in the commutative diagram
\begin{equation*}
\begin{tikzcd}
\bar x \ar[d] \ar[r, "s'"] & X' \ar[ld] \\
X
\end{tikzcd}
\end{equation*}
Applying $M$ and using that $M(y) \to M(X')$ is an equivalence by Proposition \ref{prop:rigidity}, the result follows.
\end{proof}

\begin{corollary}
The following hold under the assumptions in Lemma \ref{lemm:synpt-basics}.
\begin{enumerate}
\item If $y\in X$ is separably closed, then $s$ is an equivalence.
\item If $\bar x, \bar y \in \Schl$ are separably closed fields there is a (non-unique) equivalence 
$M(\bar x) \wequi M(\bar y)$.
\end{enumerate}
\end{corollary}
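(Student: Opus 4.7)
The plan is to deduce both parts from Lemma \ref{lemm:synpt-basics} and Proposition \ref{prop:rigidity}, with part (1) doing the substantive work and part (2) following by a zigzag through a common intermediate.

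For (1), I would first unpack the construction of $s$ from the proof of Lemma \ref{lemm:synpt-basics}: by definition, $s$ is the composite
\[
M(\bar x) \longrightarrow M(X_y^h) \xleftarrow{\sim} M(y),
\]
where the rightward equivalence is supplied by Proposition \ref{prop:rigidity} applied to $X' := X_y^h$. It therefore suffices to show that the leftward arrow $M(\bar x) \to M(X')$ is an equivalence in $\Premot_{\Z[1/\ell]}$, i.e.\ that the pullback $\SH(X')_{\ell'}^{\comp\cell} \to \SH(\bar x)_{\ell'}^{\comp\cell}$ is an equivalence of completed cellular motivic categories. Since $y$ is separably closed, $X'$ is strictly henselian. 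Let $p \in X'$ denote the image of $\bar x$ and let $X''$ denote the henselization of $X'$ at $p$; applying Proposition \ref{prop:rigidity} to $X''$ gives $M(X'') \wequi M(\kappa(p))$. Combined with a rigidity statement for the inclusion $\kappa(p) \hookrightarrow \kappa(\bar x)$ of separably closed fields, this gives $M(\bar x) \wequi M(\kappa(p))$. Together with the rigidity equivalence $M(y) \wequi M(X')$ at the closed point, these assemble to the desired equivalence $M(\bar x) \to M(X')$, whence $s$ is an equivalence.

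For (2), I would reduce to (1) by zigzagging through $M(\bar K)$ for a fixed separably closed characteristic-zero field $\bar K$ (for concreteness $\bar K = \bar{\Q}$). For each of $\bar x, \bar y$ separately, choose a prime $p \neq \ell$: if the field has positive characteristic $q$, take $p = q$ (noting $q \neq \ell$ since the field lies in $\Schl$); otherwise $p$ is arbitrary. Let $X_p = \Spec R_p$ with $R_p$ the strict henselization of $\Z[1/\ell]$ at $(p)$. Then $X_p$ is essentially smooth over the Dedekind scheme $\Spec \Z[1/\ell]$, strictly henselian with separably closed residue field $\overline{\F_p}$, and fraction field algebraic over $\Q$. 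Both $\bar x$ and $\bar K$ admit maps to $X_p$: via the fraction field embedding in characteristic zero, and via the residue map $R_p \twoheadrightarrow \overline{\F_p}$ followed by $\overline{\F_p} \hookrightarrow \kappa(\bar x)$ in positive characteristic. In either case the closed point $y = \Spec \overline{\F_p}$ is a separably closed specialization of both images, and part (1) yields
\[
M(\bar x) \wequi M(\overline{\F_p}) \wequi M(\bar K).
\]
Applying the analogous chain to $\bar y$ and composing produces the desired equivalence $M(\bar x) \wequi M(\bar y)$, with the stated non-uniqueness arising from the various choices of embeddings.

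The principal obstacle I anticipate is the rigidity for inclusions of separably closed fields used in (1): while Proposition \ref{prop:rigidity} covers henselization at a closed point, the geometric point $\bar x \to X'$ lands at a non-closed point $p$ in general, and relating $M(\kappa(p))$ to $M(\kappa(\bar x))$ requires an input beyond Proposition \ref{prop:rigidity} — a Suslin--Gabber type rigidity statement for separably closed extensions in the $\ell'$-completed cellular setting. Once that is in hand, (2) is bookkeeping around a suitably chosen strict henselization of $\Z[1/\ell]$.
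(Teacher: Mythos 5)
Your part (2) is essentially the paper's argument (a zigzag through strict henselizations of $\Z[1/\ell]$, applying part (1) twice), and your reduction in part (1) of the claim ``$s$ is an equivalence'' to ``$M(\bar x) \to M(X_y^h)$ is an equivalence'' is correct. But the core of part (1) is missing, and the route you sketch for it does not work. The statement you defer --- rigidity for a map of separably closed fields in $\Premot_{\Z[1/\ell]}$ --- is not an auxiliary input here; it \emph{is} the content of part (1), and the paper proves it directly: for the functor $F$ underlying $s$ with right adjoint $G$, one checks $\mathcal{E} \to GF\mathcal{E}$ on $\pi_{**}$ for $\mathcal{E} = \mathrm{H}\F_\ell$, $\mathrm{H}_\ret\Z$, and (for $\ell=2$) $\1[\eta^{-1}]$. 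Over any separably closed field of characteristic $\ne \ell$ these are $\F_\ell[\tau]$, $0$ and the Witt ring $\Z/2$, so the only issue is whether $F(\tau)=\tau$; this holds because $F$ is a functor under $\SH(\Z[1/\ell])^\cell$ and $\tau^n$ is already defined over $\Z[1/\ell]$ for some $n$. Without an argument of this kind your proof of (1) is circular: any attempt to compare $M(\bar x)$ with $M(y)$ (or with $M(X')$) ultimately requires exactly this comparison of two separably closed points.

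Moreover, the specific intermediate step you propose is false as stated. The residue field $\kappa(p)$ of the image of $\bar x$ in $X' = X_y^h$ is in general \emph{not} separably closed --- e.g.\ if $X'$ is the strict henselization of $\Z[1/\ell]$ at a prime, then $p$ is the generic point and $\kappa(p)$ is its fraction field, which has $H^1_\et(\kappa(p), \mu_\ell) \ne 0$; hence $M(\kappa(p)) \to M(\kappa(\bar x))$ is \emph{not} an equivalence (it already fails on $\pi_{**}\mathrm{H}\F_\ell$). Rigidity along arbitrary separable closures of fields is simply not available, and Proposition \ref{prop:rigidity} only covers henselization at a (closed) point. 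Finally, even granting all your intermediate equivalences, the concluding ``assembly'' does not close: from $M(\bar x) \wequi M(X'')$ and $M(y) \wequi M(X')$ you still need the map $M(X'') \to M(X')$ (henselization at a non-closed point) to be an equivalence, which is neither addressed nor a consequence of what precedes it.
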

\begin{proof}
(1) We have constructed a symmetric monoidal cocontinuous functor 
$F: \SH(\bar y)_{\ell'}^{\comp\cell} \to \SH(\bar x)_{\ell'}^{\comp\cell}$ under $\SH(\Z[1/\ell])^\cell$.
Denote its right adjoint by $G$.
Arguing as in the proof of Proposition \ref{prop:rigidity}, 
it suffices to show $\mathcal{E} \to GF \mathcal{E}$ is an equivalence. 
That is, 
$\mathcal{E} \to GF \mathcal{E}$ induces an isomorphism on $\pi_{**}$ for $\mathcal{E} = \mathrm{H}\F_\ell$, 
$\mathcal{E} = \mathrm{H}_\ret \Z$ and, 
if $\ell$ is even, $\mathcal{E} = \1[\eta^{-1}]$.
For any separably closed field of characteristic $\ne \ell$ we have $\pi_{**}(\mathrm{H}\F_\ell) \wequi \F_\ell[\tau]$, 
see e.g., \cite[Corollary C.2(2)]{BKWX}, \cite[Theorem 18.2.7]{io-survey}, 
$W = \Z/2$, and 
$\mathrm{H}_\ret \Z = 0$ (the real spectrum being empty).
Moreover, 
all of the maps are algebra maps over the corresponding algebra for $\Z[1/\ell]$.
Thus the map for $\pi_{**} \mathrm{H}_\ret \Z$ is trivially an isomorphism, 
and the one for $\pi_{**} \1[\eta^{-1}]$ is an isomorphism because as an algebra it is determined by $W$ according to 
\cite[Proposition 5.2]{bachmann-etaZ}.
The isomorphism for $\mathrm{H}\F_\ell$ will hold if and only if $F(\tau) = \tau$, 
which holds provided $F(\tau^n) = \tau^n$ for some $n \ge 1$.
But,
for $n\gg 0$, 
$\tau^n$ exists over $\Z[1/\ell]$ 
(if $\ell=2$ this holds with $n=1$, and for $\ell$ odd see e.g., \cite[\S4.5(2)]{bachmann-bott}).

(2) Let $x, y \in \Spec(\Z[1/\ell])$ be the images of $\bar x, \bar y$.
We may assume $y$ is a specialization of $x$.
Let $X$ be the strict henselization of $\Spec(\Z[1/\ell])$ along $y$, 
with closed point $y'$.
By (1) applied with $X=X'$ we have $M(y') \wequi M(\bar x)$, 
and by applying it with $(X, \bar x, y) = (\{y'\}, \bar y, y')$ we get $M(y') \wequi M(\bar y)$.
\end{proof}

\begin{remark}
This common category $M(\bar x) \wequi M(\bar y)$ is known as $\ell$-complete $\mathrm{MU}$-based (even) synthetic spectra 
\cite{pstrkagowski2018synthetic}.
\end{remark}

\subsection{Criterion}
Recall that for $X \in \Schl$ the objects $\mathrm{H}\F_\ell, \mathrm{H}_\ret \Z \in \SH(X)$ are cellular 
and stable under base change.
For $\mathrm{H}\F_\ell$ this is \cite[Corollary 10.4, Theorem 8.22]{spitzweck2012commutative}.
For $\mathrm{H}_\ret \Z$ this follows from the expression $\mathrm{H}_\ret \Z \wequi o(\mathrm{H}\Z)[1/\rho]$ 
\cite{bachmann-real-etale}, 
where $o: \SH \to \SH(X)$ is the unique cocontinuous symmetric monoidal functor.
In particular, 
any morphism between $M(X)$ and $M(Y)$ in $\Premot_{\Z[1/\ell]}$ preserves these objects.

\begin{proposition} 
\label{prop:criterion}
Let $X_0, X_1, X_2, X_3 \in \Schl$ be essentially smooth over Dedekind schemes and consider a commutative square
\begin{equation}
\label{equation:diagramatstake}
\begin{CD}
M(X_3) @>>> M(X_1) \\
@VVV           @VVV  \\
M(X_2) @>>> M(X_0)
\end{CD}
\end{equation}
in $\Premot_{\Z[1/\ell]}$.
In order for \eqref{equation:diagramatstake} to be cocartesian, 
it suffices that the following conditions hold:
\begin{enumerate}
  \item For each $m$, the square
    \begin{equation*}
    \begin{CD}
      \map_{\SH(X_0)}(\Sigma^{0,*}\1, \mathrm{H}\F_\ell) @>>> \map_{\SH(X_1)}(\Sigma^{0,*}\1, \mathrm{H}\F_\ell) \\
          @VVV                                       @VVV                          \\
      \map_{\SH(X_2)}(\Sigma^{0,*}\1, \mathrm{H}\F_\ell) @>>> \map_{\SH(X_3)}(\Sigma^{0,*}\1, \mathrm{H}\F_\ell) \\
    \end{CD}
    \end{equation*}
    is cartesian.
  \item The square
    \begin{equation*}
    \begin{CD}
      \Gamma_\ret(X_0, \F_\ell') @>>> \Gamma_\ret(X_1, \F_\ell') \\
          @VVV                                       @VVV      \\
      \Gamma_\ret(X_2, \F_\ell') @>>> \Gamma_\ret(X_3, \F_\ell') \\
    \end{CD}
    \end{equation*}
    is cartesian.
    Here $\F_\ell'$ equals $\F_\ell$ if $\ell=\ell'$, and $\Z$ if $\ell' = \ell h$ (i.e., when $\ell$ even).
  \item If $2 \mid \ell$, then $\vcd_2(K(X_i)) < \infty$.
\end{enumerate}
If $X_0$ contains a primitive $\ell$-th root of unity, then condition (1) can be replaced by
\begin{enumerate}[(1')]
  \item For each $m$, the square
    \begin{equation*}
    \begin{CD}
      \Gamma_\Zar(X_0, R^m\epsilon_* \F_\ell) @>>> \Gamma_\Zar(X_1, R^m\epsilon_* \F_\ell) \\
          @VVV                                       @VVV                          \\
      \Gamma_\Zar(X_2, R^m\epsilon_* \F_\ell) @>>> \Gamma_\Zar(X_3, R^m\epsilon_* \F_\ell) \\
    \end{CD}
    \end{equation*}
    is cartesian.
\end{enumerate}
\end{proposition}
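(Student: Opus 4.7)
The plan is to reduce the asserted cocartesianness in $\Premot_{\Z[1/\ell]}$ to Mayer--Vietoris statements for bigraded homotopy groups, and then to split these via an $\eta$-arithmetic fracture into an $\eta$-complete piece --- controlled by $\mathrm{H}\F_\ell$-cohomology, hence by hypothesis (1) --- and an $\eta$-periodic piece --- controlled by real étale cohomology, hence by hypothesis (2). Hypothesis (3) will enter to guarantee convergence of the relevant towers when $\ell=2$, and hypothesis (1') will be obtained from (1) via Spitzweck's description of $\pi_{**}\mathrm{H}\F_\ell$ in the presence of $\zeta_\ell$.

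First, by Lemma \ref{lemm:premot-colim}, the square is cocartesian precisely when the induced symmetric monoidal cocontinuous functor
\[
F \colon \SH(X_0)_{\ell'}^{\comp\cell} \to \SH(X_1)_{\ell'}^{\comp\cell} \times_{\SH(X_3)_{\ell'}^{\comp\cell}} \SH(X_2)_{\ell'}^{\comp\cell}
\]
is an equivalence onto the localizing subcategory generated by the image of $\SH(\Z[1/\ell])^\cell$. Since $F$ preserves the compact generators $\Sigma^{p,q}\1/\ell'$, its right adjoint preserves colimits, and following the pattern of Proposition \ref{prop:rigidity} the equivalence is detected on mapping spectra into the generators; thus it suffices to check that for each $(p,q)$ the square of bigraded homotopy groups $\pi_{p,q}(\1_{\ell'}^\comp)$ over the four $X_i$ is cartesian, i.e., that the associated Mayer--Vietoris sequence is exact.

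To control $\1_{\ell'}^\comp$ I would invoke the $\eta$-arithmetic fracture
\[
\1_{\ell'}^\comp \wequi \1_{\ell',\eta}^\comp \times_{\1_{\ell',\eta}^\comp[\eta^{-1}]} \1_{\ell'}^\comp[\eta^{-1}],
\]
reducing the Mayer--Vietoris to the analogous statement for each term. On the $\eta$-complete piece, Lemma \ref{lemm:eta-basics} identifies $\1/(\ell',\eta) \wequi \1/(\ell,\eta)$, so Theorem \ref{thm:HFp-completion}(2) yields $\1_{\ell',\eta}^\comp \wequi \1_{\mathrm{H}\F_\ell}^\comp$; via the cobar resolution, together with cellularity and base-change stability of $\mathrm{H}\F_\ell$, the Mayer--Vietoris reduces to the one for $\map(\1,\Sigma^{0,*}\mathrm{H}\F_\ell^{\wedge n+1})$ in each cosimplicial degree, and the case $n=0$ is exactly hypothesis (1) (higher $n$ follows by iterating). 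On the $\eta$-periodic piece, Lemma \ref{lemm:eta-basics} and Example \ref{ex:eta-periodic-h} give $\1_{\ell'}^\comp[\eta^{-1}] \wequi \1[\eta^{-1}]_\ell^\comp$, which by \cite{bachmann-real-etale} is governed by the real étale site, with bigraded homotopy translating to real étale cohomology with $\F_\ell'$-coefficients; hypothesis (2) then provides cartesianness, and hypothesis (3) ensures the necessary finiteness for the $\ell'$-completion and Postnikov tower to converge. The glue term $\1_{\ell',\eta}^\comp[\eta^{-1}]$ is then cartesian as a formal consequence of the other two, via two-out-of-three for pullback squares.

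For alternate condition (1'): when $X_0$ contains $\zeta_\ell$ the Tate twists $\mu_\ell^{\otimes q}$ are trivial on every $X_i$, and Spitzweck's identification \cite[Theorem 3.9]{spitzweck2012commutative} of $\pi_{p,q}\mathrm{H}\F_\ell$ with Zariski cohomology of $R^m\epsilon_*\mu_\ell^{\otimes q}$ collapses to Zariski cohomology of $R^m\epsilon_*\F_\ell$, making (1') imply (1). The main anticipated obstacle is justifying the various completion and colimit interchanges implicit in identifying the fracture pieces with their $\mathrm{H}\F_\ell$-completion and real étale counterparts, and confirming that the glue term is automatically controlled --- precisely where finite $2$-vcd (hypothesis (3)) is essential in the $\ell=2$ case to tame the intertwining of $2$- and $h$-completions dictated by $\ell' = \ell h$.
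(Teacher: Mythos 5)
Your overall strategy --- reduce via Lemma \ref{lemm:premot-colim} to full faithfulness, check this on mapping spectra out of the generators, and fracture at $\eta$ --- matches the paper's. But there is a genuine gap in how you dispose of the $\eta$-periodic piece. The equivalence $\SH(S)[\rho^{-1}]\wequi\SH(S_\ret)$ concerns $\rho$-inversion, and Lemma \ref{lemm:eta-basics} only identifies $\eta$-periodic with $\rho$-periodic \emph{after inverting $2$}. So when $2\mid\ell$ you must fracture the $\eta$-periodic piece further at $2$: the $2$-periodic part is indeed governed by real \'etale cohomology and condition (2), but the $2$-complete $\eta$-periodic part is not real-\'etale at all. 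In the paper this part is handled by the fundamental fiber sequence of \cite{bachmann-etaZ}, reducing to $\kw_{2}^\comp$, then to $\mathrm{H}W_2^\comp$ via the (complete) $\beta$-filtration, then to the $I$-adic filtration of the Witt sheaf; condition (3) enters precisely here, via \cite[Proposition 2.3]{bachmann-etaZ}, to show $\lim_n\Gamma(\ph,\ul{I}^n/2)\wequi 0$ so that the filtration is exhaustive, after which the associated graded is identified with $(\mathrm{H}\Z/2)/\tau$ and the square is deduced from condition (1). Your proposal instead assigns condition (3) a vague ``convergence of the Postnikov tower'' role on the real \'etale side, which is not where it is needed, and omits the Witt-theoretic contribution entirely; as written, the $2$-complete $\eta$-periodic Mayer--Vietoris is simply not established.

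Two smaller points. First, your derivation of (1') from triviality of the Tate twists is directionally right but too quick: condition (1) concerns the full mapping spectra $\map(\Sigma^{0,*}\1,\mathrm{H}\F_\ell)$, and one needs the $\tau$-Bockstein filtration of $\mathrm{H}\F_\ell$ (with $\tau$ produced from $[\zeta_\ell]$), together with a bounded-below-filtration lemma asserting that cartesian associated graded squares yield a cartesian limit square, to pass from the subquotients $\Gamma_\Zar(X_i,R^m\epsilon_*\F_\ell)$ back to $\mathrm{H}\F_\ell$ itself. Second, the reduction of the cobar tower for $\1_{\mathrm{H}\F_\ell}^\comp$ to a single copy of $\mathrm{H}\F_\ell$ should be justified by the projection formula (Lemma \ref{lemm:duality-yoga}) rather than by ``iterating''; this is routine but worth stating.
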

\begin{proof}
To conclude that the square is cocartesian it suffices, 
by Lemma \ref{lemm:premot-colim}, 
to prove the functor 
\[ 
\SH(X_0)_{\ell'}^{\comp\cell} 
\to \SH(X_1)_{\ell'}^{\comp\cell} \times_{\SH(X_3)_{\ell'}^{\comp\cell}} \SH(X_2)_{\ell'}^{\comp\cell} 
\] 
is fully faithful.
Let us denote by $p_{1*}: \SH(X_1)_{\ell'}^{\comp\cell} \to \SH(X_0)_{\ell'}^{\comp\cell}$ the right adjoint of the functor 
corresponding to $M(X_1) \to M(X_0)$, 
and similarly for $p_{2*}, p_{3*}$.
We need to prove that 
\[ 
\pi_{**}(\1_{\ell'}^\comp) 
\wequi 
\pi_{**}(p_{1*}(\1_{\ell'}^\comp) \times_{p_{3*}\1_{\ell'}^\comp}p_{2*}(\1_{\ell'}^\comp))
\]
Note that each of the left adjoints preserves the compact generators, 
which is true for any morphism in $\Premot$, 
and hence $p_{i*}$ preserves colimits and therefore it commutes with periodization.
Moreover, 
$p_{i*}$ commutes with $\wedge \mathcal{E}$ for every $\mathcal{E} \in \SH(X_0)^{\comp\cell}_{\ell'}$, 
and with completion at homotopy elements, by Lemma \ref{lemm:duality-yoga}.
We may check the desired equivalence after completing at $\eta$ and after inverting $\eta$, 
and similarly for other homotopy elements.
For the $\eta$-periodic statement, we further invert $2$ respectively complete at $2$.
In the $2$-complete (still $\eta$-periodic) case, 
either we have $2 \nmid \ell$ and the statement is vacuous, 
or $1/2 \in X_i$ and using the fundamental fiber sequence \cite[Corollary 1.2, Proposition 5.7]{bachmann-etaZ}, 
it suffices to establish the analogous equivalence for $\kw_{2,\ell'}^\comp$.
Recall that $\kw_{2,\ell'}^\comp$ is in fact cellular \cite[Proposition 5.7]{bachmann-etaZ}.
In the $2$-periodic (still $\eta$-periodic) case, 
arguing as in the proof of Proposition \ref{prop:rigidity}, 
it suffices to establish the analogous equivalence for $\mathrm{H}_\ret \F_\ell'$.
For the $\eta$-complete statement, 
arguing as in the proof of Proposition \ref{prop:rigidity}, 
we have $\1_{\eta,\ell'}^\comp \wequi \1_{\mathrm{H}\F_\ell}^\comp$ and we see that it suffices to establish 
the analogous equivalence for $\mathrm{H}\F_\ell$.
In summary, we need to prove the commutative square of ordinary spectra
\begin{equation*}
\begin{CD}
\map_{\SH(X_0)}(\Sigma^{0,*} \1, \mathcal{E}) @>>> \map_{\SH(X_1)}(\Sigma^{0,*} \1, \mathcal{E}) \\
     @VVV                                            @VVV \\
\map_{\SH(X_2)}(\Sigma^{0,*} \1, \mathcal{E}) @>>> \map_{\SH(X_3)}(\Sigma^{0,*} \1, \mathcal{E})
\end{CD}
\end{equation*}
is cartesian for all $* \in \Z$ and $\mathcal{E}$ one of $\kw_{2,\ell'}^\comp, \mathrm{H}\F_\ell, \mathrm{H}_\ret\F_\ell'$.

Before we start proving this, we need to make another preliminary remark.
Suppose that 
\begin{equation*}
\begin{CD}
\mathcal{E}^{(0)}_\bullet @>>> \mathcal{E}^{(1)}_\bullet \\
@VVV                    @VVV         \\
\mathcal{E}^{(2)}_\bullet @>>> \mathcal{E}^{(3)}_\bullet \\
\end{CD}
\end{equation*}
is a commutative diagram of filtered spectra such that $\mathcal{E}^{(i)}_n = 0$ for $n$ sufficiently small and 
the induced diagrams of associated graded objects
\begin{equation*}
\begin{CD}
gr_i \mathcal{E}^{(0)} @>>> gr_i \mathcal{E}^{(1)} \\
@VVV                    @VVV   \\
gr_i \mathcal{E}^{(2)} @>>> gr_i \mathcal{E}^{(3)} \\
\end{CD}
\end{equation*}
are pullbacks for each $i$.
Then the square 
\begin{equation*}
\begin{CD}
\lim_i \mathcal{E}^{(0)}_i @>>> \lim_i \mathcal{E}^{(1)}_i \\
@VVV                    @VVV         \\
\lim_i \mathcal{E}^{(2)}_i @>>> \lim_i \mathcal{E}^{(3)}_i \\
\end{CD}
\end{equation*}
is a pullback; indeed, an induction argument implies
\begin{equation*}
\begin{CD}
\mathcal{E}^{(0)}_i @>>> \mathcal{E}^{(1)}_i \\
@VVV            @VVV     \\
\mathcal{E}^{(2)}_i @>>> \mathcal{E}^{(3)}_i \\
\end{CD}
\end{equation*}
is a pullback for every $i$.

Next we show how the conditions (1)--(3) imply that the squares are cartesian.
The pullback square for $\mathrm{H}_\ret \F_\ell'$ is precisely condition (2), 
and the one for $\mathrm{H}\F_\ell$ is precisely condition (1).
The condition involving $\kw_{2,\ell'}^\comp$ is only non-vacuous if $2 \mid \ell$, 
whence $1/2 \in X_i$ and $\kw_2^\comp \wequi \kw_{2,\ell'}^\comp$.
Consider the filtration of $\kw$ by powers of $\beta$, pulled back to $X_i$.
The Postnikov filtration gives rise to the said filtration, 
and so it is complete, 
and $\mathrm{H}W$ gives all subquotients \cite[Theorem 4.4, Lemma 4.3]{bachmann-etaZ}.
Since $\kw$ is connective, 
the preliminary remark allows us to replace $\kw_2^\comp$ by $\mathrm{H}W_2^\comp$, 
which on mapping spectra yields $\Gamma(\ph, \ul{W}_2^\comp)$, 
where $\Gamma$ denotes global sections of a Nisnevich sheaf of spectra.
On mapping spectra the cellular motivic spectrum $\ul{K}^W$ \cite[Proposition 5.7, Theorem 4.4]{bachmann-etaZ} yields 
compatible filtrations of $\Gamma(\ph, \ul{W})$ by $\Gamma(\ph, \ul{I}^n)$, 
see \cite[Definition 2.6]{bachmann-etaZ}, 
where $I$ is the fundamental ideal of even dimensional quadratic forms.
Condition (3) together with \cite[Proposition 2.3]{bachmann-etaZ} implies $\lim_n \Gamma(\ph, \ul{I}^n/2) \wequi 0$. 
Thus the filtration $\Gamma(\ph, (\ul{W}/\ul{I}^n)_2^\comp)$ of $\Gamma(\ph, \ul{W}_2^\comp)$ is exhaustive.
Using the preliminary remark we may replace $\Gamma(\ph, \ul{W}_2^\comp)$ by $\Gamma(\ph, \ul{I}^*/\ul{I}^{*+1})$, 
which coincides with $\map(\Gmp{*}, (\mathrm{H}\Z/2)/\tau)$ according to \cite[Theorem 2.1, Lemma 2.7]{bachmann-etaZ}.
For this, 
we may establish the pullback square for $\mathrm{H}\F_\ell$,  
which implies the pullback square for $\mathrm{H}\Z_\ell^\comp$ and hence for 
$\mathrm{H}\Z_\ell^\comp/2 \wequi \mathrm{H}\Z/2$ since $2 \mid \ell$.

Finally, suppose that $\zeta_\ell \in X_0$.
This yields $\tau \in \pi_{0,-1}(\mathrm{H}\F_\ell)(X_0)$ given by the Bockstein on $[\zeta_\ell]$.
The cofibers of $\tau$-powers yields a filtration of $\mathrm{H}\F_\ell$ which pulls back to compatible filtrations 
on the $X_i$'s.
The explicit construction of the motivic complexes \cite[Theorem 3.9]{spitzweck2012commutative} shows that these 
filtrations are bounded, separated and exhaustive, and have subquotients $\Gamma_\Zar(X_i, R^m\epsilon_* \F_\ell)$.
Via the preliminary remark, the desired cartesian square thus reduces to condition (1').
\end{proof}

\begin{remark}\hfill
\begin{itemize}
\item In all our examples, 
the chain complexes in conditions (1') and (2) will be concentrated in a single degree.
\item If $\bar x$ is the spectrum of a separably closed field, 
then $\Gamma_\Zar(\bar x, R^m\epsilon_* \F_\ell) = 0$ for $m > 0$, 
and similarly $\Gamma_\ret(\bar x, \Z) = 0$.
\item
If the square \eqref{equation:diagramatstake} in Proposition \ref{prop:criterion} is cocartesian, 
then conditions (1) and (2) hold, and (1') holds whenever $\zeta_\ell \in X_0$.
Condition (3) is not necessary in general for the square to be cocartesian
(consider for example any square comprised of identity maps).
\end{itemize}
\end{remark}

\subsection{Models for stable motivic homotopy types}
\subsubsection{Arithmetic preliminaries}

\begin{lemma}
Suppose $K$ is a global field with ring of integers $\scr O_K$ and put $U = \Spec(\scr O_K[1/\ell])$.
If $\epsilon: U_\et \to U_\Zar$ is the change of topology functor, 
then 
\[ 
R^i \epsilon_* \mu_\ell 
\wequi 
\begin{cases} \mu_\ell & i=0 \\ 
a_\Zar \scr O^\times/\ell & i=1 \\ 
a_\ret \Z/(2,\ell) \oplus R & i=2 \\ 
a_\ret \Z/(2,\ell) & i > 2 
\end{cases}
\] 
The sheaf $R$ is determined by the exact sequence 
\[ 
0 \to R 
\to \bigoplus_{x \in \Spec(\scr O_K)^{(1)}} \F_\ell 
\to \F_\ell \oplus \bigoplus_{x \in U^{(1)}} i_{x*} \F_\ell \to 0 
\]
Here the middle term is a \emph{constant} sheaf whereas the right-hand term is a sum of a constant sheaf and skyscraper sheaves, 
and the map is given by addition in the first component and restriction in the others.
\end{lemma}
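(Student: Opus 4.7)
The plan is to compute the stalks of $R^i\epsilon_*\mu_\ell$ on the Zariski site of $U$ and then assemble them into the claimed sheaves. At $x \in U$ the stalk is $H^i_\et(\Spec \mathcal{O}_{U,x}, \mu_\ell)$; since $\dim U = 1$ there are only two cases, the generic point $\eta$ (residue field $K$) and closed points $y \in U^{(1)}$ (where $\mathcal{O}_{U,y}$ is a DVR with fraction field $K$ and residue field $k(y)$). Throughout we use that $\ell$ is invertible on $U$, so $\mu_\ell$ is a locally constant étale sheaf.

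First I would dispatch the low degrees by applying $R\epsilon_*$ to the Kummer short exact sequence $0 \to \mu_\ell \to \Gm \xrightarrow{\ell} \Gm \to 0$. Hilbert 90 gives $R^1\epsilon_*\Gm = 0$ (the Picard group of a local ring vanishes), yielding $R^0\epsilon_*\mu_\ell \wequi \mu_\ell$ and $R^1\epsilon_*\mu_\ell \wequi a_\Zar(\mathcal{O}^\times/\ell)$ directly, and identifying $R^2\epsilon_*\mu_\ell$ with the $\ell$-torsion of $R^2\epsilon_*\Gm$, whose stalks are the Brauer groups of the corresponding local rings.

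Next I would compute the $i = 2$ stalks using Brauer--Hasse--Noether: there is an exact sequence
\[
0 \to \mathrm{Br}(K)[\ell] \to \bigoplus_v \mathrm{Br}(K_v)[\ell] \to \F_\ell \to 0,
\]
with $v$ ranging over all places of $K$; each finite $v$ contributes $\F_\ell$, each real $v$ contributes $\F_2$ when $\ell=2$ (else $0$), and complex places contribute $0$. At a closed $y \in U^{(1)}$, the Gysin/residue sequence identifies $H^2_\et(\mathcal{O}_{U,y}, \mu_\ell)$ with the subgroup of $\mathrm{Br}(K)[\ell]$ unramified at $y$, i.e.\ the kernel of $\mathrm{res}_y$; under Hasse this becomes $\{(a_v) : \sum_v a_v = 0,\ a_y = 0\}$. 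The finite-place components assemble exactly into the kernel $R$ of the stated exact sequence --- the asymmetry between $\bigoplus_{x \in \Spec(\mathcal{O}_K)^{(1)}} \F_\ell$ on the left (which includes primes above $\ell$) and the $U^{(1)}$-indexed skyscrapers on the right precisely reflects that Brauer classes may ramify freely at the finite primes removed from $U$. The real-place contributions, only present when $\ell = 2$, assemble into the real étale sheaf $a_\ret \Z/2$ and split off as a direct summand.

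For $i \geq 3$, I would use that $\mathrm{cd}_\ell(K) = 2$ unless $\ell = 2$ and $K$ has a real place, in which case Tate's theorem gives $H^i(K, \mu_2) \wequi \bigoplus_{v \text{ real}} \F_2$ for $i \geq 3$. Combined with $\mathrm{cd}_\ell(k(y)) \leq 1$ at closed points and the Gysin sequence, this yields $R^i\epsilon_*\mu_\ell \wequi a_\ret \Z/(2,\ell)$. The main obstacle will be globalizing the splitting $R^2\epsilon_*\mu_\ell \wequi a_\ret \Z/(2,\ell) \oplus R$ --- stalkwise it is clear from the orthogonality of the archimedean and finite-place factors in Brauer--Hasse--Noether, but promoting it to a splitting of Zariski sheaves on all of $U$ requires a compatibility check across the generic and special stalks, which I would handle by invoking the real-étale vs.\ ordinary-étale decomposition of the Postnikov tower of $R\epsilon_*\mu_\ell$ used elsewhere in the paper.
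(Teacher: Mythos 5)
Your proposal is correct in substance but organizes the argument differently from the paper. The paper never computes stalks of $R^i\epsilon_*\mu_\ell$ directly: it quotes from Milne's \emph{Arithmetic Duality Theorems} (Remark II.2.2) the full Zariski-sheaf-level computation of $R^i\epsilon_*\Gm$ --- vanishing in odd degrees, $a_\ret\R^\times$ in high even degrees, and a four-term exact sequence presenting $R^2\epsilon_*\Gm$ in terms of constant and skyscraper sheaves built from local Brauer groups --- and then runs the Kummer sequence and the snake lemma to extract $R^i\epsilon_*\mu_\ell$ in one stroke. You instead compute stalks at the generic point (via Brauer--Hasse--Noether and Tate's theorem on $H^{\ge 3}(K,\mu_2)$) and at closed points (via the Gysin/residue sequence for the DVR $\scr O_{U,y}$), which uses the same arithmetic input but pushes the sheaf-theoretic bookkeeping to the end. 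What the paper's route buys is precisely that the global sheaf structure, including all cospecialization maps, comes packaged in the cited exact sequence, so there is nothing left to "reassemble"; what your route buys is independence from that particular reference and a more transparent view of where each local invariant lives. Two points deserve care in your version. First, at a closed point $y$ the Gysin sequence for $i=3$ reads $H^1(k(y),\Z/\ell)\to H^3(\scr O_{U,y},\mu_\ell)\to H^3(K,\mu_\ell)\to 0$, so you must check that the residue map $H^2(K,\mu_\ell)\to H^1(k(y),\Z/\ell)$ is surjective (it is, by prescribing invariants via Hasse, and this is the same surjectivity that makes the paper's map $b$ a surjection of sheaves); you should make this explicit rather than appeal only to $\mathrm{cd}_\ell(k(y))\le 1$. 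Second, the splitting $R^2\epsilon_*\mu_\ell\wequi a_\ret\Z/(2,\ell)\oplus R$, which you flag as the main obstacle, does not need the Postnikov-tower machinery you propose: it is elementary. The sequence only fails to split tautologically when $\ell=2$, and then a section of the projection onto the real-place factor is obtained by choosing a finite place $v_0$ of $K$ not in $U$ (one above $\ell$ always exists) and absorbing the sum-of-invariants constraint at $v_0$; since no stalk over $U$ imposes vanishing at $v_0$, this defines a splitting of Zariski sheaves, not merely of stalks. With those two repairs your argument goes through.
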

\begin{proof}
From \cite[Remark II.2.2]{milne2006arithmetic} we can read off the isomorphisms 
\[ 
R^0 \epsilon_* \Gm \wequi \Gm, 
\quad R^i \epsilon_* \Gm \wequi 0 \text{ for $i$ odd}, 
\quad R^i \epsilon_* \Gm \wequi a_\ret \R^\times \text{ for $i \ge 4$ even} 
\] 
and the short exact sequence 
\[ 
0 \to R^2 \epsilon_* \Gm 
\to a_\ret \Z/2 \oplus \bigoplus_{x \in \Spec(\scr O_K)^{(1)}} \Q/\Z 
\to \Q/\Z \oplus \bigoplus_{x \in U^{(1)}} i_{x*} \Q/\Z \to 0 
\]
For the exact sequence, 
recall $Br(K_v) = \Z/2$ if $v$ is a real place, 
$=0$ if $v$ is a complex place, 
and $=\Q/\Z$ is $v$ is a non-archimedean place \cite[p. 163, 193]{serre2013local}.
Moreover, 
the kernel of the restriction map is precisely the sum over the non-archimedean places missing in $U$.
The Kummer short exact sequence 
\[ 
0 \to \mu_\ell \to \Gm \xrightarrow{\ell} \Gm \to 0 
\] 
on $U_\et$ yields a long exact sequence for $R^i\epsilon_*$.
Since $R^i\epsilon_* \Gm$ vanishes in odd degrees, $R^i\epsilon_* \mu_\ell$ is given by the kernel or cokernel of 
multiplication by $\ell$.
This immediately yields the desired results for $i \ne 2,3$, 
and the snake lemma produces an exact sequence 
\[ 
0 \to R^2 \epsilon_* \mu_\ell 
\to a_\ret \Z/(2,\ell) \oplus \bigoplus_{x \in \Spec(\scr O_K)^{(1)}} \F_\ell 
\xrightarrow{b} \F_\ell \oplus \bigoplus_{x \in U^{(1)}} i_{x*} \F_\ell \to R^3 \epsilon_* \mu_\ell 
\to a_\ret \Z/(2,\ell) \to 0
\]
Since $b$ is a surjection of Zariski sheaves, the result follows.
\end{proof}

\begin{corollary} 
\label{cor:mot-coh-U}
Suppose $Pic(U)$ is uniquely $\ell$-divisible and $k(U)$ has a unique place of characteristic $\ell$.
Then $H^j(U, R^i \epsilon_* \mu_\ell) = 0$ for $j>0$ and 
\[ 
H^0(U, R^i\epsilon_* \mu_\ell) \wequi 
\begin{cases} \mu_\ell(U) & i=0 \\ 
\scr O^\times(U)/\ell & i=1 \\ 
(\Z/(2,\ell))^{\Sper(K)} & i>1 
\end{cases}
\]
\end{corollary}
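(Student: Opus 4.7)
The plan is to compute $H^j(U, R^i \epsilon_* \mu_\ell)$ one sheaf at a time, using the explicit description from the preceding lemma together with the fact that every Zariski sheaf on the one-dimensional scheme $U$ has cohomological dimension at most $1$.

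For $i = 0$, the sheaf $\mu_\ell$ is constant on the irreducible $U$, hence flabby, so $H^0 = \mu_\ell(U)$ and the higher cohomology vanishes. For $i = 1$, I would split the Kummer four-term exact sequence
\[
0 \to \mu_\ell \to \scr O^\times \xrightarrow{\ell} \scr O^\times \to a_\Zar \scr O^\times/\ell \to 0
\]
of Zariski sheaves into two short exact sequences and chase cohomology, using $H^1(U, \scr O^\times) = \mathrm{Pic}(U)$ and $H^j(U, \scr O^\times) = 0$ for $j \ge 2$. The assumption that $\mathrm{Pic}(U)$ is uniquely $\ell$-divisible then forces $H^0 = \scr O^\times(U)/\ell$ and $H^j = 0$ for $j \ge 1$.

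For $i \ge 2$ I would handle the two summands separately. The summand $a_\ret \Z/(2,\ell)$ vanishes when $\ell$ is odd; when $\ell = 2$, I would identify its Zariski cohomology on $U$ with real étale cohomology $H^*_\ret(U, \Z/2)$ and invoke the equality of real étale and Zariski cohomological dimensions (used already in the proof of Proposition~\ref{prop:rigidity}(3)) to reduce to $H^0(\Sper K, \Z/2) = (\Z/2)^{\Sper K}$, the real spectrum being unchanged by inverting $\ell$. For the auxiliary sheaf $R$, I would feed its defining short exact sequence into the long exact cohomology sequence: the middle term is a constant sheaf and the right term is a sum of a constant sheaf and skyscrapers at closed points of $U$, so neither contributes any higher Zariski cohomology, and the hypothesis that $K$ has a unique place of characteristic $\ell$ makes the induced map on $H^0$ a bijection, forcing $H^j(U, R) = 0$ for all $j$.

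The principal obstacle I foresee is the real étale identification — one must carefully justify that $a_\ret \Z/2$, viewed as a Zariski sheaf on $U$, computes the global real étale cohomology of $U$, and that $\Sper U$ reduces to the finite discrete set $\Sper K$ with trivial higher cohomology. Once this is granted, summing the contributions from the two summands at $i=2$ reproduces $(\Z/(2,\ell))^{\Sper K}$ with vanishing higher cohomology, exactly as claimed.
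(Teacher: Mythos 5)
Your proposal is correct and follows essentially the same route as the paper's proof: flasqueness of the constant sheaf for $i=0$, splitting the Kummer four-term sequence into two short exact sequences and translating the $i=1$ claims into unique $\ell$-divisibility of $\mathrm{Pic}(U)$, and for $i\ge 2$ treating $a_\ret\Z/(2,\ell)$ via discreteness of $\Sper(U)\cong\Sper(K)$ and killing $R$ via its defining sequence (which the paper observes is a flasque resolution, so your long-exact-sequence chase computes exactly its kernel and cokernel on global sections, trivial precisely because there is a unique place above $\ell$).
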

\begin{proof}
Since $\mu_\ell|_{U_\Zar}$ is constant and constant sheaves are flasque, the claims for $i=0$ are clear.
The claims about $a_\ret \Z/(2,\ell)$ follow because $R(U) \wequi \Sper(k)$ is discrete.
Since the Zariski cohomological dimension of $U$ is $1$, 
it remains to show that $H^0_\Zar(U, \Gm/\ell) \wequi \scr O^\times(U)/\ell$, $H^1_\Zar(U, \Gm/\ell) = 0$, 
and $H^*_\Zar(U, R) = 0$ for $*=0, 1$.
Using the short exact sequences $0 \to \mu_\ell \to \Gm \to \ell\Gm \to 0$ and $0 \to \ell\Gm \to \Gm \to \Gm/\ell \to 0$, 
the first two claims are equivalent to unique $\ell$-divisibility of $Pic(U)$.
The exact sequence defining $R$ is, 
in fact, 
a flasque resolution, 
so its $H^0$ and $H^1$ are given by the kernel and cokernel of the induced map on global sections.
This induced map is an isomorphism as needed if and only if $\Spec(\scr O_K) \setminus U$ consists of precisely one point, 
which holds by assumption.
\end{proof}

\begin{lemma} \label{lemm:detect-powers}
Let $\ell$ be prime, $K$ a global field and $U \subset Spec(\scr O_K)$ open.
Let $H \subset \scr O^\times(U)/\ell$ be an arbitrary subgroup.
There exist $x_1, \dots, x_n \in U^{(1)}$ such that the restriction \[ H \subset \scr O^\times(U)/\ell \to \prod_i k(x)^\times/\ell \] is an isomorphism.
If $H$ is non-trivial, there exist infinitely many such choices.
\end{lemma}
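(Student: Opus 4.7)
The plan is to apply Chebotarev density to a suitable Kummer extension. The key observation is that for a finite residue field $k(x)$ the group $k(x)^\times/\ell$ is either $\F_\ell$ (when $\ell \mid |k(x)^\times|$, equivalently $\zeta_\ell \in k(x)$) or trivial, so the problem reduces to producing finitely many $\F_\ell$-linear functionals on $H$ that span $H^\vee$, each realized as reduction modulo a chosen prime.

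First I would note that by the $S$-unit theorem for global fields, $\scr O^\times(U)$ is finitely generated, hence $H$ is a finite-dimensional $\F_\ell$-vector space. Set $d := \dim_{\F_\ell} H$ and fix lifts $u_1, \dots, u_d \in \scr O^\times(U)$ of a basis. I would then form the Kummer extension
\[
L := K(\zeta_\ell, u_1^{1/\ell}, \dots, u_d^{1/\ell}).
\]
Since $[K(\zeta_\ell):K]$ divides $\ell - 1$ and is therefore coprime to $\ell$, the map $K^\times/\ell \to K(\zeta_\ell)^\times/\ell$ is injective, so the $u_i$ remain $\F_\ell$-linearly independent after base change; Kummer theory then gives $\mathrm{Gal}(L/K(\zeta_\ell)) \cong (\F_\ell)^d$, with the $i$-th coordinate of a Frobenius element recording the reduction of $u_i$ modulo $\ell$-th powers.

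Next I would invoke Chebotarev density in the Galois extension $L/K$. For a prime $x \in U^{(1)}$ of residue characteristic $\neq \ell$ that splits completely in $K(\zeta_\ell)/K$ and is unramified in $L/K$, the Frobenius of any prime $\mathfrak{x}$ of $L$ above $x$ lies in $\mathrm{Gal}(L/K(\zeta_\ell)) \cong (\F_\ell)^d$, and its coordinates are exactly the images of $u_1, \dots, u_d$ under $\scr O^\times(U)/\ell \to k(x)^\times/\ell \cong \F_\ell$ for a chosen identification using $\zeta_\ell$. Chebotarev provides infinitely many primes realizing each prescribed Frobenius class in $\mathrm{Gal}(L/K(\zeta_\ell))$; I would select $x_1, \dots, x_d$ whose Frobenii form the standard basis of $(\F_\ell)^d$, so that the resulting restriction $H \to \prod_{j} k(x_j)^\times/\ell$ has matrix the identity and is therefore an isomorphism.

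The infinitude statement then follows immediately: if $d \geq 1$, each of the $d$ Frobenius classes can be realized by infinitely many primes (Chebotarev again), giving infinitely many valid tuples. I do not foresee a serious obstacle; the one subtlety worth highlighting is the descent through $K(\zeta_\ell)$, which is harmless precisely because $\gcd([K(\zeta_\ell):K],\ell) = 1$ makes the extension of scalars on $\ell$-torsion injective.
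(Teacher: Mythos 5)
Your proof is correct, but it takes a genuinely different route from the paper's. The paper argues by induction on $\dim_{\F_\ell}H$: it picks $1\neq a\in H$, invokes the single fact that an element of $\scr O(U)$ which is an $\ell$-th power in almost every residue field is an $\ell$-th power (the cited Neukirch exercise) to find infinitely many $x$ with $a\not\equiv$ an $\ell$-th power in $k(x)$, splits off $\lra{a}\cong k(x)^\times/\ell$, and recurses on a complement; the resulting matrix of the map $H\to\prod_i k(x_i)^\times/\ell$ is triangular with invertible diagonal. You instead build the single Kummer extension $L=K(\zeta_\ell,u_1^{1/\ell},\dots,u_d^{1/\ell})$ and apply Chebotarev once to prescribe all the Frobenii simultaneously, producing a diagonal matrix. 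Both arguments ultimately rest on Chebotarev (the Neukirch exercise is proved exactly by your construction with $d=1$), so yours is essentially the paper's black box unrolled and applied in one shot; what it buys is the stronger simultaneous independence statement without the bookkeeping of choosing complements at each stage, at the cost of having to handle the descent through $K(\zeta_\ell)$. One point you gloss over: the Frobenius of $x$ in $\mathrm{Gal}(L/K)$ is only a conjugacy class, and conjugation by $\mathrm{Gal}(L/K(\zeta_\ell \cdot u_\bullet^{1/\ell})/\!\!\bmod)$ --- more precisely by lifts of $\mathrm{Gal}(K(\zeta_\ell)/K)$ --- rescales elements of $\mathrm{Gal}(L/K(\zeta_\ell))\cong(\F_\ell)^d$ by the cyclotomic character, so you can only prescribe each basis vector $e_i$ up to a scalar in $\F_\ell^\times$; this is harmless, since a diagonal matrix with nonzero entries is still invertible, but it should be said. (Both your argument and the paper's implicitly assume $\ell$ is invertible on $U$, i.e.\ $\ell\neq\operatorname{char}K$ in the function field case --- otherwise $k(x)^\times/\ell=0$ for all $x$ and the statement fails for nontrivial $H$; this is satisfied in all applications, where $U\subset\Spec(\scr O_K[1/\ell])$.)
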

\begin{proof}
First recall the following fact (see e.g.,  \cite[Exercise VI.1.2]{neukirch2013algebraic}): If $a \in \scr O(U)$ is an $\ell$-th power in $k(x)$ for all but finitely many $x \in U^{(1)}$, then $a$ is an $\ell$-th power.

If $H$ is non-trivial, pick $1 \ne a \in H$.
Since $H$ is a $\Z/\ell$-vector space, we may write $H = \lra{a} \times H'$, where $\lra{a} \wequi \Z/\ell$ is the subgroup generated by $a$.
By the above fact, there exists $x \in U^{(1)}$ such that the image of $a$ in $k(x)^\times/\ell$ is non-zero, and in fact infinitely many choices of $x$.
Since $k(x)$ is finite, $k(x)^\times/\ell \wequi \Z/\ell \wequi \lra{a}$.
We are thus reduced to proving the result for $H'$, and conclude by induction since $\scr O^\times(U)/\ell$ is finite according to Dirichlet's 
unit theorem \cite[Corollary 11.7]{neukirch2013algebraic}.
\end{proof}

In \cite{gras1986regular}, 
Gras introduced the narrow tame kernel $K_{2}^{+}(\scr O_{F})$ as the subgroup of $K_{2}(\scr O_{F})$ where the regular 
symbols on all the real embeddings of $F$ vanish, 
i.e., there is an exact sequence
\[
0\to K_{2}^{+}(\scr O_{F}) \to K_{2}(\scr O_{F}) \to \bigoplus^{r}\Z/2 \to 0
\]
We refer to \cite[Definition 7.8.1]{gras2003cft} for the arithmetic notion of $\ell$-regular number fields.
\begin{definition}
\label{definition:lregular}
Let $\ell$ be a prime number.
A number field $F$ is called $\ell$-regular if the $\ell$-Sylow subgroup of the narrow tame kernel 
$K_{2}^{+}(\scr O_{F})$ is trivial.
\end{definition}

See \cite{gras1986regular}, \cite{gras1989regular}, \cite{rognes2000regular}, \cite{berrick2011hermitian}
for complementary results about these families of number fields.
For example, the field of rational numbers $\Q$ is $\ell$-regular for every prime $\ell$, 
and $\Q(\zeta_\ell)$ is $\ell$-regular if $\ell$ is a regular prime number in the sense of Kummer \cite{Was-cyclotomic}.
In \cite{siegel1964}, Siegel conjectured there are infinitely many regular prime numbers. 

We have the following explicit characterization of $\ell$-regular number fields.
\begin{proposition}
\label{proposition:characterizationofregularnumberfields}
Let $F$ be a number field.
We write ${\scr O}_{F}^{\prime}$ for the ring of $\ell$-integers $\scr O_F[1/\ell]$.
\begin{enumerate}
\item $F$ is $2$-regular if and only if the prime ideal $(2)$ does not split in $F/\Q$ and the narrow Picard group 
$Pic_{+}({\scr O}_{F}^{\prime})$ has odd order.
\item Let $\ell$ be an odd prime number and assume $\mu_{\ell}\subset F$.
Then $F$ is $\ell$-regular if and only if the prime ideal $(\ell)$ does not split in $F/\Q$ and the 
$\ell$-Sylow subgroup of the Picard group $Pic({\scr O}_{F}^{\prime})$ is trivial.
\item Let $\ell$ be an odd prime number.  
Assume $\mu_{\ell}\not\subset F$ and $F$ contains the maximal real subfield of $\Q(\zeta_{\ell})$.
Then $F$ is $\ell$-regular if and only if the prime ideals above $(\ell)$ in $F$ do not split in the 
quadratic extension $F(\zeta_{\ell})/F$ and the $\ell$-Sylow subgroups of the Picard groups $Pic({\scr O}_{F})$ 
and $Pic({\scr O}_{F(\zeta_{\ell})})$ are isomorphic.
\end{enumerate}
\end{proposition}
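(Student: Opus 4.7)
The plan is to reduce $\ell$-regularity to the vanishing of a certain étale cohomology group, and then to compute that group via the Kummer sequence and global class field theory, in the same spirit as the proof of Corollary \ref{cor:mot-coh-U}.

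The first step is to invoke the norm residue isomorphism (Merkurjev--Suslin for $\ell$ odd, and Voevodsky's resolution of the Milnor conjecture for $\ell = 2$) to identify the Galois symbol
\[
K_2(\scr O_F)/\ell \xrightarrow{\cong} H^2_\et(\scr O_F[1/\ell], \mu_\ell^{\otimes 2})
\]
For $\ell$ odd the surjection $K_2(\scr O_F) \twoheadrightarrow \bigoplus^{r} \Z/2$ dies modulo $\ell$, so $K_2^+(\scr O_F) \otimes \F_\ell \cong K_2(\scr O_F) \otimes \F_\ell$; at $\ell = 2$ the narrow signature map corresponds on the arithmetic side to the passage from the ordinary to the narrow Picard group. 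Since the $\ell$-Sylow subgroup of a finite abelian group vanishes iff its mod-$\ell$ reduction does, this reduces $\ell$-regularity to the vanishing of an explicit étale cohomology group.

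Next, I would apply the Kummer sequence on $(\scr O_F[1/\ell])_\et$, combined with Hilbert 90, to obtain the short exact sequence
\[
0 \to Pic(\scr O_F[1/\ell])/\ell \otimes \mu_\ell \to H^2_\et(\scr O_F[1/\ell], \mu_\ell^{\otimes 2}) \to Br(\scr O_F[1/\ell])[\ell] \otimes \mu_\ell \to 0
\]
Global class field theory (Hasse--Brauer--Noether) computes $Br(\scr O_F[1/\ell])[\ell]$ as the kernel of a sum-of-invariants map out of $\bigoplus_{v \mid \ell} \Z/\ell$, with an additional contribution from real places when $\ell = 2$. In particular, for $\ell$ odd with $\mu_\ell \subset F$, this kernel vanishes precisely when $(\ell)$ does not split in $F/\Q$, which combined with the $Pic$-part yields (2). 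Case (1) is analogous, with the real-place contributions rigging the argument to land on $Pic_+$ rather than $Pic$.

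For (3), with $\mu_\ell \not\subset F$, the assumption that $F$ contains the maximal real subfield of $\Q(\zeta_\ell)$ ensures that $\Delta := \mathrm{Gal}(F(\zeta_\ell)/F)$ has order $2$, hence coprime to $\ell$. Restriction therefore identifies the $\ell$-primary étale cohomology of $\scr O_F[1/\ell]$ with the $\Delta$-invariants of the corresponding cohomology of $\scr O_{F(\zeta_\ell)}[1/\ell]$. One then descends case (2) to $F$: the non-trivial element of $\Delta$ acts by $-1$ on $\mu_\ell$ and trivially on $\mu_\ell^{\otimes 2}$, so the vanishing of $H^2_\et(\scr O_F[1/\ell], \mu_\ell^{\otimes 2})$ is equivalent, on the Picard side, to the $\ell$-Sylows of $Pic(\scr O_F[1/\ell])$ and $Pic(\scr O_{F(\zeta_\ell)}[1/\ell])$ being isomorphic (via the averaging idempotent $\frac{1}{2}(1+\sigma)$ available since $|\Delta|$ is invertible mod $\ell$), and on the Brauer side, to the non-splitting condition for primes above $\ell$ in $F(\zeta_\ell)/F$. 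The main obstacle is the bookkeeping of Tate twists and signature corrections, particularly in case (1); both are handled carefully in Gras' original treatment \cite{gras1986regular}, which this argument essentially repackages.
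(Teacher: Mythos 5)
Your proposal is correct in outline, but it takes a genuinely different (and more explicit) route than the paper: the paper's entire proof is a one-line citation, declaring the statement to be ``a reformulation of'' the Corollary on pp.\ 328--329 of Gras's paper, with a pointer to Rognes--{\O}stv{\ae}r for $\ell=2$. You instead reconstruct the underlying argument: identify $K_2(\scr O_F)/\ell$ (resp.\ its narrow variant at $\ell=2$) with $H^2_\et(\scr O_F[1/\ell],\mu_\ell^{\otimes 2})$ via the norm residue isomorphism, split that group by the Kummer sequence into a $Pic/\ell$ part and a Brauer part, compute the Brauer part by Hasse--Brauer--Noether (so that its vanishing is exactly the non-splitting condition on $(\ell)$), and descend from $F(\zeta_\ell)$ to $F$ in case (3) using that $|\Delta|=2$ is prime to $\ell$ and that $\Delta$ acts trivially on $\mu_\ell^{\otimes 2}$ but by $-1$ on $\mu_\ell$. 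This is the standard modern proof and each step is sound; what it buys is a self-contained explanation of \emph{why} the splitting and Picard conditions appear, which the paper's citation hides. The two places where you are genuinely hand-wavy are exactly the delicate ones: at $\ell=2$ the real places contribute to $Br(\scr O_F[1/2])[2]$, so the naive computation does not vanish even when $2$ is non-split, and one must work with the narrow/positive variants throughout to land on $Pic_+$ (this is the Browkin--Schinzel/Gras $2$-rank formula); and in case (3) the proposition compares $Pic(\scr O_F)$ and $Pic(\scr O_{F(\zeta_\ell)})$ rather than their $\ell$-inverted versions, a discrepancy absorbed by the non-splitting hypothesis. You defer both to Gras, which is legitimate given that the paper defers the entire statement to Gras.
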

\begin{proof}
This is a reformulation of \cite[Corollary on pp. 328-329]{gras1986regular}.
See also \cite[Proposition 2.2]{rognes2000regular} when $\ell=2$.
\end{proof}

For further reference we recall that a commutative square of abelian groups
\begin{equation*}
\begin{CD}
A_1 @>i_1>> A_2 \\
@Vi_2VV   @VVp_1V  \\
A_3 @>p_2>> A_4
\end{CD}
\end{equation*}
is called \emph{bicartesian} if it is a pullback when viewed as a commutative square of spectra.

\subsubsection{Stable motivic homotopy types of $2$-regular number fields}

\begin{theorem} 
\label{thm:main-1}
Suppose $F$ is a $2$-regular number field with $r$ real and $c$ pairs of complex embeddings.
Let $x, y_1, \dots, y_{c} \in \Spec({\scr O}_{F}^{\prime})$ be closed points.
\begin{enumerate}
\item There is a canonical commutative square in $\Premot$
\begin{equation*}
\begin{CD}
M_{2h}(\C^{c+r}) @>>> M_{2h}(\R^{r}) \amalg \coprod_i M_{2h}(y_i) \\
@VVV       @VVV \\
M_{2h}(x) @>>> M_{2h}({\scr O}_{F}^{\prime})
\end{CD}
\end{equation*}
\item The square in (1) is a pushout if and only if there is a naturally induced isomorphism
\begin{equation*}
({\scr O}_{F}^{\prime})^\times/2 \wequi 
(\R^\times/2)^{r} \times k(x)^\times/2 \times \prod_i k(y_i)^\times/2 \quad (\wequi (\Z/2)^{1+r+c})
\end{equation*}
\item There exist infinitely many choices of $x, y_1, \dots, y_{c}$ such that the map in (2) is an isomorphism.
\end{enumerate}
\end{theorem}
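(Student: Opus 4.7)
\emph{Plan.}

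\emph{The square in (1).} Write $U = \Spec(\scr O_F')$. Each of the $r + c$ archimedean places of $F$ determines an embedding $F \hookrightarrow \C$ and hence a geometric generic point $\Spec(\C) \to U$ specializing to every closed point of $U$. By Lemma~\ref{lemm:synpt-basics}, each such specialization yields a map $M_{2h}(\C) \to M_{2h}(y)$ in $\Premot_{\Z[1/2]}$; when $y$ is a real place, this factors through the scheme map $\Spec(\C) \to \Spec(\R)$. Matching the first $r$ copies of $\C$ in $\C^{c+r}$ with the real places and the remaining $c$ with $y_1, \dots, y_c$ gives the top arrow, and matching all $c + r$ geometric points with their specialization to $x$ gives the left arrow; commutativity with $M_{2h}(U)$ is built into Lemma~\ref{lemm:synpt-basics}.

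\emph{Proof of (2).} We verify the hypotheses of Proposition~\ref{prop:criterion} with $\ell = 2$ (so $\ell' = 2h$), using condition $(1')$ in place of (1) since $-1 \in U$. Condition (3) holds because each residue or function field has finite virtual $2$-cohomological dimension. Condition (2) follows by inspection of real spectra: $\Sper(\C^{c+r})$, $\Sper(x)$, and $\Sper(y_i)$ are empty, while $\Sper(U) \cong \Sper(\R^r)$ consists of the $r$ real places, so the real étale square reduces to the identity on $\Z^r$. For $(1')$, Proposition~\ref{proposition:characterizationofregularnumberfields}(1) gives that $(2)$ has a unique prime in $F$ and that $Pic_+(U)$ has odd order, hence so does the quotient $Pic(U)$, and Corollary~\ref{cor:mot-coh-U} then computes all relevant cohomology. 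Case $m = 0$: all entries are $\Z/2$ with multiplicities given by connected components, and the square is cartesian by direct inspection. Case $m = 1$: $\C^\times/2 = 0$ forces the square to be cartesian precisely when the map in (2) is an isomorphism. Case $m \ge 2$: both left entries are $(\Z/2)^r$, contributed by the real places via $a_\ret\Z/2$, while both right entries vanish.

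\emph{Proof of (3).} By Dirichlet's unit theorem, the uniqueness of the prime above $(2)$, and $\mu_2 \subset F$, one has $({\scr O}_F')^\times/2 \cong (\Z/2)^{r + c + 1}$. The exact sequence
\[
({\scr O}_F')^\times/2 \xrightarrow{\mathrm{sign}} (\Z/2)^r \to Pic_+(U)/2 \to Pic(U)/2 \to 0
\]
together with the odd order of $Pic_+(U)$ shows the sign map is surjective, so its kernel $H$ has order $2^{c+1}$. Applying Lemma~\ref{lemm:detect-powers} to $H$ produces infinitely many tuples $(x, y_1, \dots, y_c) \in (U^{(1)})^{c+1}$ such that $H \xrightarrow{\sim} k(x)^\times/2 \times \prod_i k(y_i)^\times/2 \cong (\Z/2)^{c+1}$; combined with surjectivity onto $(\R^\times/2)^r$, this yields the desired isomorphism.

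\emph{Main obstacle.} The principal technical effort lies in the cohomological bookkeeping for (2): verifying condition $(1')$ of Proposition~\ref{prop:criterion} uniformly in $m$. The degrees $m = 0$ and $m \ge 2$ become tautological only after the auxiliary sheaf $R$ of Corollary~\ref{cor:mot-coh-U} vanishes on $U$---which uses precisely the single-prime-above-$2$ hypothesis of $2$-regularity---leaving the $m = 1$ case as the substantive content, which then matches the statement of (2) exactly because $\C^\times/2 = 0$.
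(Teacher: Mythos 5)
Your proposal is correct and follows essentially the same route as the paper: part (1) via Lemma \ref{lemm:synpt-basics} applied to the archimedean embeddings, part (2) by checking conditions (1'), (2), (3) of Proposition \ref{prop:criterion} using Corollary \ref{cor:mot-coh-U} (with the whole content concentrated in the $m=1$ square), and part (3) via Dirichlet's unit theorem, surjectivity of the sign map from $2$-regularity, and Lemma \ref{lemm:detect-powers}. The only cosmetic differences are that you argue surjectivity of the sign map directly from the narrow class group sequence where the paper cites Berrick--Karoubi, and you take $\Z^r$ rather than $(\Z/2)^r$ in the real-étale square, which is in fact the reading consistent with the statement of condition (2).
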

\begin{proof}
To simplify notation, throughout this proof we put $M := M_{2h}$.

(1) For $z \in \Spec({\scr O}_{F}^{\prime})$ and $\alpha: K \hookrightarrow \C$, 
Lemma \ref{lemm:synpt-basics} furnishes a map 
$f_{z,\alpha}: M(\C) \to M(z)$ and a homotopy between 
$M(\C) \to M(z) \to M({\scr O}_{F}^{\prime})$ and the map $M(\C) \to M({\scr O}_{F}^{\prime})$ induced by $\alpha$.
In (1), 
the bottom and right-hand maps are the canonical ones.
Write $\alpha_1, \bar \alpha_1, \dots, \alpha_c, \bar \alpha_c, \beta_1, \dots, \beta_r$ for the complex and real embeddings.
Let $\alpha_{c+i} = \iota \circ \beta_i$, where $\iota: \R \to \C$ is the canonical embedding.
The left-hand map is $f_{x,\alpha_i}$ on component $i$.
The top map is $f_{y_i, \alpha_i}$ on the $i$-th component if $i \le c$, and induced by $\iota$ on the remaining components.
In all cases, 
the induced composite map $M(\C) \to M({\scr O}_{F}^{\prime})$ is either equal or homotopic to the map induced by $\alpha_i$.
Thus the square commutes.

(2)
We use the criterion from Proposition \ref{prop:criterion}.
Condition (3) holds since the fields are finitely generated.
For condition (2), the square
\begin{equation*}
\begin{CD}
(\Z/2)^{r} @>>> 0 \\
@VVV           @VVV \\
(\Z/2)^{r} @>>> 0
\end{CD}
\end{equation*}
is clearly bicartesian because the map 
${\scr O}_{F}^{\prime} \to \R^r$ induces an isomorphism on real spectra.

Next we check condition (1').
Owing to \cite[Proposition 2.1(5)]{berrick2011hermitian} the $2$-regularity assumption implies 
$Pic({\scr O}_{F}^{\prime})$ has odd order, 
so it is uniquely $2$-divisible, 
and $F$ has only one place of characteristic $2$. 
Thus Corollary \ref{cor:mot-coh-U} applies and it remains to check the bicartesianess of three squares.
The first one is
\begin{equation*}
\begin{CD}
\Z/2 @>>> (\Z/2)^{r+c} \\
@VVV        @VVV           \\
\Z/2 @>>> (\Z/2)^{r+c}
\end{CD}
\end{equation*}
Observe that if $X, Y$ are connected schemes and $f: M(X) \to M(Y)$ is any map in $\Premot_{\Z[1/2]}$, 
then $f^*: H^0(Y, \Z/2) \to H^0(X, \Z/2)$ is an isomorphism.
Indeed this reduces to the case of the structure map $M(X) \to M(\Z[1/2])$, where it is obvious.
Thus the square for $m=0$ is bicartesian because the vertical maps are isomorphisms.
When $m=2$ the square is the same as in condition (2) above, and hence it is bicartesian.
The remaining square for $m=1$ takes the form
\begin{equation*}
\begin{CD}
({\scr O}_{F}^{\prime})^\times/2 @>>> k(x)^\times/2 \\
@VVV           @VVV    \\
(\R^\times/2)^{r} \times \prod_i k(y_i)^\times/2 @>>> (\C^\times/2)^{r+c} = 0
\end{CD}
\end{equation*}
Since the inclusion of abelian groups into spectra preserves finite products, 
this square is bicartesian if and only if the stated condition holds.

(3)
Dirichlet's unit theorem \cite[Corollary 11.7]{neukirch2013algebraic} implies 
$({\scr O}_{F}^{\prime})^\times \wequi \mu({\scr O}_{F}^{\prime}) \times \Z^{r+c}$; 
here $\mu({\scr O}_{F}^{\prime})$ is the finite abelian group of roots of unity in ${\scr O}_{F}^{\prime}$.
It is cyclic, being a finite multiplicative subgroup of a field,  
and since $\{\pm 1\} \in \mu({\scr O}_{F}^{\prime})$ the group has even order.
It follows that $({\scr O}_{F}^{\prime})^\times/2 \wequi \Z/2 \times (\Z/2)^{r+c}$.
Moreover, 
$2$-regularity implies the naturally induced map 
$({\scr O}_{F}^{\prime})^\times/2 \to (\R^\times/2)^{r} \wequi (\Z/2)^{r}$ is surjective 
\cite[Proposition 2.1(5)]{berrick2011hermitian}; we write $U_+ \wequi (\Z/2)^{1+c}$ for its kernel.
The condition in part (2) holds if and only if the induced map $U_+ \to k(x)^\times/2 \times \prod_i k(y_i)^\times/2$ 
is an isomorphism.
Lemma \ref{lemm:detect-powers} implies the latter is true for infinitely many choices of $x, y_i$.
\end{proof}

\begin{remark}
As in Examples \ref{ex:eta-periodic-h} and \ref{ex:n-compl-strong}, 
Theorem \ref{thm:main-1}(1) implies similar pushout squares with respect to completions at $2$, $h$, 
and with respect to periodizations at $\rho$, $\eta$.
For example, we have a pushout square in $\Premot$
\begin{equation*}
\begin{CD}
M(\C^{c+r})[\eta^{-1}] @>>> M(\R^{r})[\eta^{-1}] \amalg \coprod_i M(y_i)[\eta^{-1}] \\
@VVV       @VVV \\
M(x)[\eta^{-1}] @>>> M({\scr O}_{F}^{\prime})[\eta^{-1}]
\end{CD}
\end{equation*}
\end{remark}

\begin{remark} \label{rmk:wedge}
The various embeddings $\alpha_i: K \to \C$ differ by automorphisms of $\C$.
It follows that one may choose the maps $f_{x,\alpha_i}$ to be of the form $\sigma_i \circ f_{x,\alpha_1}$.
Thus, applying an automorphism of $M_{2h}(\C^{r+s})$ in the square of Theorem \ref{thm:main-1}, we may assume that all the left hand vertical maps $M_{2h}(\C) \to M_{2h}(x)$ are the same.
The square being a pushout now is equivalent to saying that there are lifts of  $M_{2h}(x), M_{2h}(y_i)$ to $\Premot_{M_{2h}(\C)/}$, 
and an equivalence
\[ 
M_{2h}({\scr O}_{F}^{\prime})
\wequi 
\bigvee^r M_{2h}(\R) \vee M_{2h}(x) \vee \bigvee^c M_{2h}(y_i) 
\] 
Here $\vee$ denotes the coproduct in $\Premot_{M(\C)/}$.
\end{remark}

\begin{example} 
\label{ex:Z12}
When $F=\Q$ we consider $\Z[1/2]^\times \wequi \{\pm 1\} \times \{(1/2)^n\}$ and $\Z[1/2]^\times/2 \wequi \Z/2\{-1, 2\}$.
Here $\Z/2\{2\}$ is the kernel of the surjection $\Z[1/2]^\times/2\to\R^\times/2$. 
We need to find a closed point $x \in \Spec(\Z[1/2])$ such that $2$ is not a square in $k(x)$.
This holds when $k(x) = \Spec(\F_q)$, where $q\equiv \pm 3\bmod 8$.
In particular the canonical map \[ \SH(\Z[1/2])_2^{\comp\cell} \to \SH(\R)_2^{\comp\cell} \times_{\SH(\C)_2^{\comp\cell}} \SH(\F_3)_2^{\comp\cell}  \]
is fully faithful.
To deduce Theorem \ref{theorem:mainthmintro} from the introduction, let $\scr E \in \SH(\Z[1/2])_2^{\comp\cell}$ and compute $\map(\1, \scr E)$ using the above square.
\end{example}

\subsubsection{Stable motivic homotopy types of $\ell$-regular number fields}

\begin{theorem} 
\label{thm:main-2}
Let $F$ be a number field with $c$ pairs of complex embeddings and $\ell$ be an odd prime number.
Suppose $F$ is $\ell$-regular and $\mu_{\ell}\subset F$. 
Let $x, y_1, \dots, y_c \in \Spec({\scr O}_{F}^{\prime})$ be closed points.
\begin{enumerate}
\item There is a canonical commutative square in $\Premot$
\begin{equation*}
\begin{CD}
M_\ell(\C^{c}) @>>> \coprod^{c} M_\ell(y_i) \\
@VVV                   @VVV  \\
M_\ell(x)    @>>> M_\ell({\scr O}_{F}^{\prime})
\end{CD}
\end{equation*}
\item The square is a pushout if and only if there is a naturally induced isomorphism
\[ 
({\scr O}_{F}^{\prime})^\times/\ell 
\wequi k(x)^\times/\ell \times \prod_{i} k(y_i)^\times/\ell 
\quad (\wequi (\F_\ell)^{1+c}) \]
\item There exist infinitely many choices of $x,y_1, \dots, y_c$ satisfying (2).
\end{enumerate}
\end{theorem}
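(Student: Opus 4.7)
The plan is to mirror the proof of Theorem \ref{thm:main-1} closely, with substantial simplifications afforded by the hypothesis $\mu_\ell \subset F$ for odd $\ell$. Since $\zeta_\ell$ is a non-real element of $F$, the field admits no real embeddings; it has only $c$ conjugate pairs of complex embeddings $\alpha_1, \bar\alpha_1, \dots, \alpha_c, \bar\alpha_c$. To simplify notation throughout, put $M := M_\ell$.

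For part (1), I would construct the canonical square exactly as in the proof of Theorem \ref{thm:main-1}: choose one representative $\alpha_i$ from each conjugate pair, and invoke Lemma \ref{lemm:synpt-basics} to produce maps $f_{x,\alpha_i}: M(\C) \to M(x)$ and $f_{y_i,\alpha_i}: M(\C) \to M(y_i)$ together with homotopies making both composites $M(\C) \to M({\scr O}_{F}^{\prime})$ agree with the map induced by $\alpha_i$.

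For part (2), I would verify the criterion of Proposition \ref{prop:criterion}. Condition (3) is vacuous since $\ell$ is odd. Condition (2) is also vacuous: none of the four vertices has a non-empty real spectrum, so all four $\Gamma_\ret$-groups vanish. Since $\zeta_\ell \in X_0 = {\scr O}_{F}^{\prime}$, I would switch to the alternative condition (1'). By Proposition \ref{proposition:characterizationofregularnumberfields}(2), $\ell$-regularity supplies both that $(\ell)$ does not split in $F/\Q$ (unique place of residue characteristic $\ell$) and that the $\ell$-Sylow subgroup of $Pic({\scr O}_{F}^{\prime})$ is trivial; since the Picard group is finite, the latter gives unique $\ell$-divisibility. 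Corollary \ref{cor:mot-coh-U} then computes $H^j(U, R^i\epsilon_* \mu_\ell)$ on each vertex, and the required bicartesianness splits into three cases. For $m=0$, the sheaves are constant $\mu_\ell$ on all vertices (note $\zeta_\ell$ reduces to a primitive $\ell$-th root of unity in each $k(x), k(y_i)$ because $\ell$ is invertible there), and the vertical maps are isomorphisms, so the square is bicartesian. For $m=1$, the square is precisely the unit-group square displayed in (2) of the theorem, with the bottom-right corner vanishing since $\C$ is algebraically closed; bicartesianness is thus exactly the stated isomorphism condition. For $m\ge 2$, all entries vanish because $(\Z/(2,\ell))^{\Sper}=0$ for odd $\ell$.

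For part (3), Dirichlet's unit theorem together with the unique place over $\ell$ gives $({\scr O}_{F}^{\prime})^\times \wequi \mu(F) \times \Z^{c}$ (using $r=0$, $s=1$), so $({\scr O}_{F}^{\prime})^\times/\ell \wequi (\F_\ell)^{1+c}$; the torsion factor contributes $\Z/\ell$ because $\ell$ divides $|\mu(F)|$. The target $k(x)^\times/\ell \times \prod_i k(y_i)^\times/\ell$ is likewise $(\F_\ell)^{1+c}$, and iterated application of Lemma \ref{lemm:detect-powers} produces infinitely many choices of $x, y_1, \dots, y_c$ for which the restriction is an isomorphism. The main obstacle will be the verification of (1'), namely correctly translating $\ell$-regularity into the hypotheses of Corollary \ref{cor:mot-coh-U} and matching the resulting three cohomology squares with the unit-group isomorphism in (2); because condition (2) of Proposition \ref{prop:criterion} and the $m\ge 2$ case of (1') are both trivial in the absence of real places and for odd $\ell$, this is essentially a bookkeeping exercise modeled on the $2$-regular proof.
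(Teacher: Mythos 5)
Your proposal is correct and follows essentially the same route as the paper, which itself just says the proof is the same as that of Theorem \ref{thm:main-1} with condition (1') reducing to the unit-group isomorphism via Corollary \ref{cor:mot-coh-U} and $\Z/(2,\ell)=0$, condition (2) being vacuous because $\Sper(\scr O_F')=\emptyset$, and part (3) following from Lemma \ref{lemm:detect-powers}. Your write-up simply fills in the bookkeeping (the three cases $m=0$, $m=1$, $m\ge 2$ and the Dirichlet rank count) that the paper leaves implicit.
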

\begin{proof}
The proof is essentially the same as that of Theorem \ref{thm:main-1}.
The maps $x, y_i\to \Spec({\scr O}_{F}^{\prime})$ together with choices of embeddings of $K$ into $\C$ induce, 
via Lemma \ref{lemm:synpt-basics}, 
the maps $M_\ell(\C) \to M_\ell(x),M_\ell(y_i)$ in the commutative square.
One verifies, 
using Corollary \ref{cor:mot-coh-U} and $\Z/(2, \ell) = 0$, 
that condition (1') of Proposition \ref{prop:criterion} reduces to the condition stated in (2). 
The other conditions hold trivially; 
since $K$ contains a primitive $\ell$-th root of unity, 
the real spectrum $\Sper({\scr O}_{F}^{\prime}) = \emptyset$.
The existence of infinitely many choices in (3) follows from Lemma \ref{lemm:detect-powers}.
\end{proof}

\begin{remark}
Arguing as in Remark \ref{rmk:wedge}, we find that there are lifts of 
$M_{\ell}(x), M_{\ell}(y_i)$ to $\Premot_{M_{\ell}(\C)/}$, 
and an equivalence
\[ 
M_{\ell}({\scr O}_{F}^{\prime})
\wequi 
\bigvee^c M_{\ell}(y_i) \vee M_{\ell}(x)
\]
\end{remark}

\begin{example}
Theorem \ref{thm:main-2} applies to $F=\Q(\zeta_\ell)$ if $\ell$ is regular ---
we note that $(\ell)$ is totally ramified in $F$ 
and $K_{2}(\Z[\zeta_\ell])/\ell\equiv \mu_{\ell}\otimes Pic(\Z[\zeta_\ell])$. 
In this case, 
${\scr O}_{F}^{\prime}=\Z[1/\ell,\zeta_\ell]$ and $k(x)=\F_{p}$, 
where $p$ is a prime number which is congruent to $1 \bmod \ell$ but is not congruent to $1 \bmod \ell^{2}$ by 
\cite[Example 1.9]{dwyer1994topological}.
\end{example}

\begin{theorem} 
\label{thm:main-3}
Let $\ell$ an odd regular prime and $p\ne \ell$ a prime number.
There is a commutative square in $\Premot_{\Z[1/\ell]}$
\begin{equation*}
\begin{CD}
M_\ell(\C) @>>> M_\ell(\R) \\
@VVV              @VVV     \\
M_\ell(\F_p) @>>> M_\ell(\Z[1/\ell])
\end{CD}
\end{equation*}
The square is a pushout if $p$ generates the multiplicative group of units $(\Z/{\ell^2})^\times$.
\end{theorem}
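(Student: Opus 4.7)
The plan follows the structure of Theorems \ref{thm:main-1} and \ref{thm:main-2}, with the added difficulty that $\Z[1/\ell]$ does not contain $\zeta_\ell$, so the simpler criterion (1') of Proposition \ref{prop:criterion} is unavailable. First I would construct the square using Lemma \ref{lemm:synpt-basics}: the map $M_\ell(\C) \to M_\ell(\R)$ arises from $\R \hookrightarrow \C$, and $M_\ell(\C) \to M_\ell(\F_p)$ from a choice of embedding $\bar{\F}_p \hookrightarrow \C$ together with the specialization to $(p) \in \Spec(\Z[1/\ell])$; the lemma supplies the homotopies guaranteeing that both composites to $M_\ell(\Z[1/\ell])$ agree with the canonical structure map from $M_\ell(\C)$.

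For the pushout claim I would apply Proposition \ref{prop:criterion} with $X_0 = \Spec(\Z[1/\ell])$, $X_1 = \Spec(\R)$, $X_2 = \Spec(\F_p)$, $X_3 = \Spec(\C)$. Condition (3) is vacuous because $\ell$ is odd. For condition (2), $\Sper(\C) = \Sper(\F_p) = \emptyset$, while the unique real place of $\Q$ identifies $\Sper(\Z[1/\ell])$ with $\Sper(\R)$, so the $\Gamma_\ret(\ph, \F_\ell)$-square has an isomorphism along the top row and zeroes along the bottom, and is therefore cartesian.

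The remaining task, and the main obstacle, is condition (1). Using \cite[Theorem 3.9]{spitzweck2012commutative}, for each weight $m$ the mapping spectrum $\map(\Sigma^{0,m}\1, \mathrm{H}\F_\ell)(X_i)$ equals the Zariski hypercohomology of a truncation of $R\epsilon_* \mu_\ell^{\otimes m}$, so cartesianness reduces to a statement about étale cohomology of the four schemes with $\mu_\ell^{\otimes m}$ coefficients for all $m$. My preferred route is Galois descent. Since $\ell$ is regular, the field $F = \Q(\zeta_\ell)$ is $\ell$-regular by Kummer's criterion, and Theorem \ref{thm:main-2} provides a pushout square over $\Z[\zeta_\ell, 1/\ell]$. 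The hypothesis that $p$ generates $(\Z/\ell^2)^\times$ is used in two ways: it forces $(p)$ to be inert in $\Z[\zeta_\ell, 1/\ell]$ with residue field $\F_{p^{\ell-1}}$, providing the single closed point $x$ of Theorem \ref{thm:main-2}; and it forces the Fermat quotient $(p^{\ell-1}-1)/\ell$ to be a unit modulo $\ell$, so that $k(x)^\times/\ell \wequi \F_\ell$, giving the units-detection condition of Theorem \ref{thm:main-2}(2). The hardest part is the descent itself: one must verify that taking $G = (\Z/\ell)^\times$-fixed points of each entry of the Theorem \ref{thm:main-2} square recovers the corresponding entry of the Theorem \ref{thm:main-3} square. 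This reduces, via a Hochschild--Serre argument, to vanishing statements for twisted étale cohomology of $\Z[\zeta_\ell, 1/\ell]$ that follow from global class field theory --- the same input used by Dwyer--Friedlander in \cite{dwyer1994topological} to compute $\Z[1/\ell]_\et^\comp$.
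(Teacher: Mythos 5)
Your construction of the square and your verification of conditions (2) and (3) of Proposition \ref{prop:criterion} agree with the paper. The treatment of condition (1), however, takes a different route and contains a genuine gap. You propose to deduce cartesianness by Galois descent from the pushout square of Theorem \ref{thm:main-2} over $A=\Z[1/\ell,\zeta_\ell]$. There are two problems. First, the corners of the Theorem \ref{thm:main-2} square (namely $M_\ell(\C^c)$, $\coprod^c M_\ell(y_i)$, $M_\ell(x)$ with $c=(\ell-1)/2$) are not the base changes along $\Z[1/\ell]\to A$ of the corners of the Theorem \ref{thm:main-3} square (those base changes are $M_\ell(\C^{\ell-1})$, $M_\ell(\C^{(\ell-1)/2})$, $M_\ell(\F_{p^{\ell-1}})$), so taking $G$-fixed points of the former cannot recover $M_\ell(\R)$, $M_\ell(\F_p)$, etc. Second, and more seriously, the square one actually obtains by base change to $A$ is \emph{not} cocartesian in $\Premot$: as the remark following Theorem \ref{thm:main-3} points out, it is cartesian in étale cohomology but not in motivic cohomology (e.g., $H^{1,0}(A,\F_\ell)=0$ while the corresponding map on $H^{0,0}$ is not surjective). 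So there is no pushout over $A$ from which the pushout over $\Z[1/\ell]$ could be descended; the failure sits exactly in the weight-dependent truncation distinguishing motivic from étale cohomology, and that truncation does not commute with descent of cartesian squares.

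The paper's actual argument separates the two issues. By Corollary \ref{cor:mot-coh-U} one has $\Gamma(A,\F_\ell(i))\wequi\Gamma_\et(A,\F_\ell(i))_{\ge -i}$, and a transfer argument (using that $[\Q(\zeta_\ell):\Q]=\ell-1$ is prime to $\ell$) exhibits $\Gamma(\Z[1/\ell],\F_\ell(i))$ as a summand of $\Gamma(A,\F_\ell(i))$, whence $\Gamma(\Z[1/\ell],\F_\ell(i))\wequi\Gamma_\et(\Z[1/\ell],\F_\ell(i))_{\ge -i}$; the same identification holds for $\C,\R,\F_p$ since they are Nisnevich local. Condition (1) then reduces to (a) cartesianness of the square of \emph{étale} cochain complexes with $\F_\ell(i)$-coefficients, which is precisely \cite[Theorem 2.1]{dwyer1994topological} and is where the hypothesis that $p$ generates $(\Z/\ell^2)^\times$ enters, together with (b) surjectivity of $H^i_\et(\R,\F_\ell(i))\oplus H^i_\et(\F_p,\F_\ell(i))\to H^i_\et(\C,\F_\ell(i))$ for all $i$, which controls the effect of the truncation and is vacuous for $i>0$ and immediate for $i=0$. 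Your Hochschild--Serre/class-field-theory input is the right idea for step (a) --- it is essentially how Dwyer--Friedlander prove their theorem --- but the descent must be carried out on étale cohomology only, with the passage to motivic cohomology performed over $\Z[1/\ell]$ itself via the truncation identity and the surjectivity check (b). As written, your argument omits (b) and asserts a descent of the motivic statement that fails for the intermediate square.
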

\begin{proof}
We get the square from Lemma \ref{lemm:synpt-basics} and proceed by verifying the conditions in 
Proposition \ref{prop:criterion}.
Since $\Z[1/\ell]$ has a unique real embedding, condition (2) holds.
Condition (3) is vacuous.
Next we verify condition (1).
Let us write $\Gamma(X, \F_\ell(i))$ for the motivic complex and 
$\Gamma_\et(X, \F_\ell(i)) \wequi \Gamma_\et(X, \mu_\ell^{\otimes i})$ for its étale version.
If $A = \Z[1/\ell, \zeta_\ell]$, 
then $H^0_\et(A, \F_\ell) = \F_\ell$, $H^1_\et(A, \F_\ell) = A^\times/\ell$ and $H^*_\et(A, \F_\ell) = 0$ else, 
see \cite[Remark II.2.2]{milne2006arithmetic}.  
Corollary \ref{cor:mot-coh-U} implies that $\Gamma(A, \F_\ell(i)) \wequi \Gamma_\et(A, \F_\ell(i))_{\ge -i}$.
A transfer argument shows $\Gamma(\Z[1/\ell], \F_\ell(i))$ is a summand of $\Gamma(A, \F_\ell(i))$, 
and similarly for $\Gamma_\et$. 
We deduce the equivalence
\[ 
\Gamma(\Z[1/\ell], \F_\ell(i)) \wequi \Gamma_\et(\Z[1/\ell], \F_\ell(i))_{\ge -i}
\]
The same is true for $\C,\R,\F_p$ since they are Nisnevich local.
Consequently condition (1) will hold if the square
\begin{equation*}
\begin{CD}
\Gamma_\et(\Z[1/\ell], \F_\ell(i)) @>>> \Gamma_\et(\F_p, \F_\ell(i)) \\
@VVV                                        @VVV \\
\Gamma_\et(\R, \F_\ell(i)) @>>> \Gamma_\et(\C, \F_\ell(i))
\end{CD}
\end{equation*}
is cartesian and the maps 
\[ 
H^i_\et(\R, \F_\ell(i)) \oplus H^i_\et(\F_p, \F_\ell(i)) \to H^i_\et(\C, \F_\ell(i)) 
\] 
are surjective for every $i$.
The first condition holds by \cite[Theorem 2.1]{dwyer1994topological}.
The second condition is vacuous when $i>0$ and easily verified for $i=0$.
\end{proof}

\begin{remark}
By adjoining an $\ell$-th root of unity, one obtains the commutative square
\begin{equation*}
\begin{CD}
M_\ell(\C \otimes \Z[\zeta_\ell]) @>>> M_\ell(\R \otimes \Z[\zeta_\ell]) \\
@VVV              @VVV     \\
M_\ell(\F_p \otimes \Z[\zeta_\ell]) @>>> M_\ell(\Z[1/\ell, \zeta_\ell])
\end{CD}
\end{equation*}
This induces a cartesian square in étale cohomology, 
but \emph{not} in motivic cohomology
(since e.g., the group $H^{1,0}(\Z[1/\ell,\zeta_\ell],\F_\ell)=0$ but the corresponding map on $H^{0,0}$ is not surjective).
\end{remark}

\subsubsection{Relation to étale homotopy types}
Corresponding to the squares in Theorems \ref{thm:main-1}, \ref{thm:main-2} and \ref{thm:main-3}, 
there are analogous squares of \emph{étale homotopy types}; 
in fact, 
Lemma \ref{lemm:synpt-basics}, 
the only non-formal input in the construction of the said squares, 
also holds for étale homotopy types.
Due to the equivalence
\[
\mathrm{H}_\et\Z/{\ell^n} \wequi \mathrm{H}\Z/{\ell^n}[(\tau)^{-1}] \in \SH(S)_\ell^{\comp\cell}
\]
from e.g., \cite[Theorem 7.4]{bachmann-bott}, 
our cocartesian squares in $\Premot_{\Z[1/\ell]}$ induce cartesian squares in étale cohomology with 
$\Z/{\ell^n}(i)$-coefficients.
By Dwyer--Friedlander \cite[Theorem 2.1]{dwyer1994topological} \cite[pp. 144--145]{dwyer1983conjectural}, 
the resulting squares of étale homotopy types become pushouts after appropriate homological localization.

By analyzing the proof of Theorem \ref{thm:main-3}, 
one sees that condition (1) in Proposition \ref{prop:criterion} is satisfied if the following hold:
\begin{itemize}
\item $\dim X_0 \le 1$, $\dim X_n = 0$ else.
\item The induced square of étale cohomology with $\F_\ell(i)$-coefficients is cartesian.
\item The induced square of Zariski cohomology with $\F_\ell$-coefficients is cartesian.
\item The group $H^1_\Zar(X_0, R^i \epsilon_* \F_\ell(i)) = 0$ for $i \ge 0$.
\end{itemize}

\section{Applications to slice completeness and universal motivic invariants} 
\label{sec:applications}

We apply the results in Section \ref{sec:models} to show slice completeness and compute the endomorphism ring 
of the motivic sphere over regular number rings.
Our completeness result for Voevodsky's slice filtration \cite{voevodsky-slice-filtration} is motivated by applications 
such as motivic generalizations of Thomason's étale descent theorem for algebraic $K$-theory in \cite{elmanto2017-bott}
and \cite{bachmann-bott},
convergence of the slice filtration \cite{levine2013convergence},
the solution of Milnor's conjecture on quadratic forms in \cite{rondigs2016milnorconjecture},
computations of universal motivic invariants in \cite{rondigs2016first} and of hermitian $K$-groups in \cite{kylling2018hermitian}.

For the standard nomenclature associated with the slice filtration, 
such as the effective covers $f_{q}$ and the effective cocovers $f^{q}$, 
and the slice completion $\scomp$ we refer to \cite[\S 3, (3.1), (3.3), (3.10)]{rso-solves}.
Let $\SH(S)_{\ge 0}$ denote the connective motivic spectra with respect to the homotopy $t$-structure on $\SH(S)$
\cite[\S2.1]{hoyois-algebraic-cobordism}.
The notion of a cell presentation of finite type is defined in \cite[\S3.3]{rondigs2016first}.

\begin{proposition} \label{prop:slice-app}
Suppose $F$ is a $2$-regular number field and set ${\scr O}_{F}^{\prime}:= \scr O_F[1/2]$. 
\begin{enumerate}
\item Let $\mathcal{E}_\bullet \in \SH({\scr O}_{F}^{\prime})_2^{\comp\cell}$ be a tower such that 
$\lim_n p_i^*(\mathcal{E}_n) \wequi 0$, 
where $p_i^*$ denotes the pullback to any of the fields in Theorem \ref{thm:main-1}(1).
Then $\lim_n \mathcal{E}_n \wequi 0$ is contractible.
\item If $\mathcal{E} \in \SH({\scr O}_{F}^{\prime})^\veff \cap \SH({\scr O}_{F}^{\prime})^\cell$ is cellular and very effective,
then $\mathcal{E}/2$ is $\eta$-complete on homotopy.
\item Let $\mathcal{E} \in \SH({\scr O}_{F}^{\prime})_{\ge 0} \cap \SH({\scr O}_{F}^{\prime})^\cell$ 
and assume the slices of $\mathcal{E}$ are cellular and stable under base change.
Then there is an isomorphism 
\[
\pi_{\ast,\ast}(\lim_n f^n(\mathcal{E})/(2,\rho)) \wequi \pi_{\ast,\ast}(\mathcal{E}/(2,\rho))
\]
\item Let $\mathcal{E} \in\SH({\scr O}_{F}^{\prime})^\eff\cap\SH({\scr O}_{F}^{\prime})^\cell$ be cellular and effective. 
Assume $\mathcal{E}/2$ has a $\Z_{(2)}$-cell presentation of finite type and its slices are cellular and stable under base change.
Then $\mathcal{E}/(2,\eta)$ is slice complete on homotopy and 
\[
\pi_{\ast,\ast}(\scomp(\mathcal{E})_2^\comp) \wequi \pi_{\ast,\ast}( \mathcal{E}_{2,\eta}^\comp)
\]
\end{enumerate}
In particular, there is an isomorphism
\[
\pi_{\ast,\ast}(\scomp(\1)_2^\comp)\wequi \pi_{\ast,\ast}(\1_2^\comp)
\]
\end{proposition}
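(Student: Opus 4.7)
The approach is to invoke the pushout square of Theorem \ref{thm:main-1} in $\Premot$ to reduce each assertion over $\scr O_F'$ to its counterpart over the ``corner'' fields $\C,\R,k(x),k(y_i)$, where slice convergence (Levine) and $\eta$-completeness are either standard or essentially formal. Part (1) packages the reduction; parts (2)--(4) each consume it, and the final corollary is extracted by specializing to $\scr E = \1$.

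For part (1): by Lemma \ref{lemm:premot-colim}, the pushout square embeds $\SH(\scr O_F')_{2h}^{\comp\cell}$ fully faithfully into the $\infty$-categorical pullback of the corner categories $\SH(X_i)_{2h}^{\comp\cell}$, so mapping spectra out of bigraded spheres are computed corner-wise. Writing $s = \Sigma^{**}\1$, one then has
\[
\map_{\SH(\scr O_F')}(s, \lim_n \scr E_n) \wequi \lim_n \lim_i \map_{\SH(X_i)}(s, p_i^* \scr E_n) \wequi \lim_i \map_{\SH(X_i)}(s, \lim_n p_i^* \scr E_n) \wequi 0,
\]
using fully faithfulness together with compactness of $s$, a swap of limits, and the hypothesis $\lim_n p_i^* \scr E_n \wequi 0$. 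Since the bigraded spheres generate $\SH(\scr O_F')_{2h}^{\comp\cell}$, this forces $\lim_n \scr E_n \wequi 0$.

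For part (2): very effectivity forces $\scr E \in \SH(\scr O_F')_{\ge 0}$, so $\scr E/2$ is connective and $2$-complete. Theorem \ref{thm:HFp-completion}(2) then identifies $(\scr E/2)_\eta^\comp \wequi (\scr E/2)_{H\F_2}^\comp$, and the standard $H\F_2$-Adams convergence argument for bounded-below cellular objects (relying on left completeness of the homotopy $t$-structure and the connectivity of the layers $\scr E/2 \wedge H\F_2^{\wedge k+1}$) yields $\scr E/2 \wequi (\scr E/2)_{H\F_2}^\comp$ on $\pi_{**}$, i.e., $\eta$-completeness on homotopy.

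For parts (3), (4) and the corollary: apply part (1) to the towers $\{f_n(\scr E)/(2,\rho)\}_n$ and $\{f_n(\scr E)/(2,\eta)\}_n$. Base-change stability of slices gives $p_i^* f_n(\scr E) \wequi f_n(p_i^*\scr E)$, reducing each claim to slice convergence for connective cellular spectra over each corner field, which is known by Levine's convergence theorem \cite{levine2013convergence} and its extensions. The finite-type cell presentation hypothesis of (4) controls the $\lim^1$ terms in the slice completion, while the Milnor--Witt relations $h = 2 + \eta\rho$ and $\eta h = 0$ (Lemma \ref{lemm:eta-basics}) are used to pass between the mod-$\rho$ and mod-$\eta$ reductions after working $2$-locally. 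Stitching these together yields the desired isomorphism $\pi_{**}(\scomp(\scr E)_2^\comp) \wequi \pi_{**}(\scr E_{2,\eta}^\comp)$. The ``in particular'' corollary applies (4) to $\scr E = \1$ — whose cellular finite-type presentation mod $2$ is standard and whose slices are cellular and stable under base change — and then invokes part (2) for $\1$ to identify $\pi_{**}(\1_{2,\eta}^\comp) \wequi \pi_{**}(\1_2^\comp)$.

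The hard part will be the slice-filtration convergence modulo $(2,\rho)$ and $(2,\eta)$ over the corner fields (especially the finite fields $k(x), k(y_i)$) and the verification that the finite-type cell presentation hypothesis passes cleanly through base change. The $\rho$-to-$\eta$ interchange modulo $2$ via the Milnor--Witt algebra is the subtlest algebraic point, requiring careful bookkeeping of which homotopy elements survive each reduction.
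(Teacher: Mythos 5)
Your proof of part (1) is essentially the paper's argument, and your strategy for parts (3) and (4) --- pass to the corner fields via (1), using base-change stability of slices, and cite slice convergence over fields --- is also the paper's. Two small inaccuracies there: the field-level inputs are \cite[Proposition 5.2]{bachmann-eta} for (3) and \cite[Proposition 3.49]{rondigs2016first} for (4), not Levine's convergence theorem as such (the corner fields include $\R$, which has infinite $2$-cohomological dimension, so the flavour of convergence result that applies is the ``mod $(2,\rho)$'' or ``mod $(2,\eta)$'' one from those references, not the absolute statement); and there is no ``$\rho$-to-$\eta$ interchange via the Milnor--Witt relations'' anywhere in the argument --- (3) is stated and proved mod $(2,\rho)$, (4) is stated and proved mod $(2,\eta)$, and these are never converted into one another.

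The genuine gap is in part (2), where you abandon the reduction to fields and propose a direct $H\F_2$-Adams convergence argument over $\scr O_F'$. This step fails. The ``standard'' convergence argument you invoke would need the fibers $\bar E^{\otimes n}\otimes X$ of the partial totalizations of the cobar construction (with $\bar E=\fib(\1\to H\F_2)$) to have connectivity tending to $\infty$, so that by left completeness the tower ends at $X$ itself. But $\bar E$ is only $0$-connective (not $1$-connective), so the layers do not gain connectivity and left completeness gives you nothing. Concretely, $\eta$-completeness of $\scr E/2$ for very effective cellular $\scr E$ is a nontrivial property even over a single field: over a field of finite virtual $2$-cohomological dimension it is the content of \cite[Theorem 5.1]{bachmann-eta}, which uses input well beyond ``bounded below plus left-complete $t$-structure.'' Over $\scr O_F'$ it is not known how to prove this directly, and this is precisely the point of (1): the paper sets $\mathcal{E}_n := \Sigma^{n,n}(\mathcal{E}/2)$ with transition maps $\eta$, notes that over each corner field $p_i^*(\mathcal{E}/2)$ is $\eta$-complete by loc.\ cit., and concludes $\lim_n \mathcal{E}_n \wequi 0$ over $\scr O_F'$ by (1). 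Since your proof of the concluding ``in particular'' also consumes (2) (to pass from $\pi_{**}(\1_{2,\eta}^\comp)$ to $\pi_{**}(\1_2^\comp)$), this gap propagates there as well.
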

\begin{proof}
(1) Let $I = \{* \to * \leftarrow *\}$ be the category so that $\lim_I$ means pullback.
For all $X \in \SH({\scr O}_{F}^{\prime})_2^{\comp\cell}$ we compute 
\begin{gather*} 
\Map(X, \lim_n \mathcal{E}_n) \wequi \lim_n \Map(X, \mathcal{E}_n) \wequi 
\lim_n \lim_{i \in I} \Map(p_i^*(X), p_i^*(\mathcal{E}_n)) \\ 
\wequi \lim_i \lim_n \Map(p_i^*(X), p_i^*(\mathcal{E}_n)) \wequi 
\lim_i \Map(p_i(X), \lim_n p_i^*(\mathcal{E}_n)) \wequi 0
\end{gather*}
The result follows.

(2) Recall that $\mathcal{E}$ is $\eta$-complete if and only if 
\[ 
\lim \left[ \cdots \xrightarrow{\eta} \Sigma^{2,2} \mathcal{E} 
\xrightarrow{\eta} \Sigma^{1,1} \mathcal{E} \xrightarrow{\eta} \mathcal{E} \right]
\wequi 0
\]
Thus by (1) it suffices to check $p_i^*(\mathcal{E}/2)$ is $\eta$-complete for each $i$, 
which holds by \cite[Theorem 5.1]{bachmann-eta}.

(3) The claim holds if and only if $\lim_n f_n(\mathcal{E})/(2,\rho) \wequi 0$ on homotopy groups, 
or equivalently when computed in $\SH({\scr O}_{F}^{\prime})^\cell$.
The assumptions imply $f_n(\mathcal{E}) \in \SH({\scr O}_{F}^{\prime})^\cell$ and 
$p_i^* f_n \mathcal{E} \wequi f_n p_i^* \mathcal{E}$.
Hence by (1) it suffices to note that $\lim_n f^n(p_i^* \mathcal{E})/(2,\rho) \wequi p_i^*(\mathcal{E})/(2,\rho)$
owing to \cite[Proposition 5.2]{bachmann-eta}.

(4) For the first statement we need to prove $\lim_n f_n(\mathcal{E}/(2,\eta)) \wequi 0$ on homotopy groups.
As in (3), 
this reduces to the same statement over fields, 
which holds by \cite[Proposition 3.49]{rondigs2016first}.
For the second statement we need to show $\scomp(\mathcal{E}/2) \wequi \mathcal{E}_\eta^\comp/2$, 
which holds by the proof of \cite[Lemma 3.13]{rondigs2016first}: 
$\scomp(\mathcal{E}/2)$ is $\eta$-complete since $\mathcal{E}/2$ is effective, 
and $\scomp(\mathcal{E}/2)/\eta \wequi \scomp(\mathcal{E}/(2,\eta)) \wequi \mathcal{E}/(2,\eta)$ 
--- the first equivalence holds by inspection of the slices.
The final statement follows since the slices of $\1_{(2)}$ over ${\scr O}_{F}^{\prime}$ are known and have the 
desired properties by \cite[Remark 2.2, Theorem 2.12]{rondigs2016first}.
\end{proof}
\todo{slice completeness for $\ell$ odd?}
\NB{$E \to L_b E$ is a $\rho$-periodic equivalence, 
whence its fiber is determined by the $\rho$-completion: 
$F \wequi \fib(F_\rho^\comp \to F_\rho^\comp[\rho^{-1}])$. 
In particular, 
$F$ is concentrated in Chow $\le n$ if the same holds for $F_\rho^\comp$.}

\begin{remark}
We expect that analogs of Proposition \ref{prop:slice-app} hold over more general base schemes.
Moreover, 
we expect that these results hold without the qualification ``on homotopy.''
Both shortcomings are a result of our specific technique for accessing global sections of cellular spectra over arithmetic
base schemes.
\end{remark}

Recall that any unit $a \in \scr O(S)^\times$ gives rise to a map $[a]: \1 \to S^{1,1} \in \SH(S)$, 
and hence an element
\[ 
\lra{a} := 1 + \eta[a] \in \pi_{0,0}(\1_S) 
\]
This turns $\pi_{0,0}(\1)$ into an $\Z[\scr O(S)^\times]$-algebra.
We made use of the algebra structure in the formulation of Theorem \ref{theorem:2mainthmintro} for $\Z[1/2]$.
The generalization to $2$-regular number rings takes the following form.

\begin{theorem}
\label{theorem:general2mainthmintro}
Suppose $F$ is a $2$-regular number field with $r$ real embeddings and $c$ pairs of complex embeddings.
For the endomorphism ring of the motivic sphere over the base scheme ${\scr O}_{F}^{\prime}:= \scr O_F[1/2]$ 
there is an isomorphism of $\Z[({\scr O}_{F}^{\prime})^\times]$-algebras
\[ 
\pi_{0,0}(\1_{{\scr O}_{F}^{\prime}}) \otimes \Z_{(2)} 
\wequi 
\GW({\scr O}_{F}^{\prime}) \otimes \Z_{(2)}
\]
induced by the unit map $\1 \to \KO$.
Moreover, we have the vanishing result
\[ 
\pi_{*,0}(\1_{\scr O_F'}) \otimes \Z_{(2)} 
= 
0 
\quad\text{for}\quad *<0
\]
\end{theorem}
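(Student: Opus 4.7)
The plan is to combine the pullback square of Theorem \ref{thm:main-1} with Morel's identification of $\pi_{0,0}(\1)$ over fields and Schlichting's identification of $\pi_{0,0}(\KO)$ with the Grothendieck--Witt group over regular schemes. I would apply Theorem \ref{thm:main-1} (via its consequence for mapping spectra, as in Theorem \ref{theorem:mainthmintro}) simultaneously to $\mathcal{E} = \1$ and $\mathcal{E} = \KO$, both of which are cellular over ${\scr O}_{F}^{\prime}$ (the latter after localization at $2$). This yields a morphism of cartesian squares of abelian groups on $\pi_{0,0}$, with corners indexed by the fields $\C$, $\R$, $k(x)$, and the $k(y_i)$.

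Over each corner field $k$, the unit map $\pi_{0,0}(\1_k) \to \pi_{0,0}(\KO_k) = \GW(k)$ is the Morel isomorphism, known over $\C$, $\R$ and finite fields; after $2$-completion it induces a corner-wise equivalence, so the induced map on pullbacks
\[
\pi_{0,0}(\1_{{\scr O}_{F}^{\prime}})_2^\comp \xrightarrow{\cong} \pi_{0,0}(\KO_{{\scr O}_{F}^{\prime}})_2^\comp \cong \GW({\scr O}_{F}^{\prime})_2^\comp
\]
is an isomorphism, using Schlichting's identification $\pi_{0,0}(\KO_X) \cong \GW(X)$ for regular $X$ with $1/2 \in X$.

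To pass from $2$-adic completion to $\Z_{(2)}$-localization I would invoke the arithmetic fracture square: it remains to establish the analogous isomorphism after inverting $2$. Here one uses the equivalence $\SH(S)[\rho^{-1}] \wequi \SH(S_\ret)$ of \cite{bachmann-real-etale} together with Lemma \ref{lemm:eta-basics}, which reduces $\pi_{0,0}(\1_{{\scr O}_{F}^{\prime}})[1/2]$ to real-étale signature data that also controls $\GW({\scr O}_{F}^{\prime})[1/2]$; matching unit maps, both sides become free $\Z[1/2]$-modules of rank $1+r$. The vanishing statement $\pi_{*,0}(\1_{{\scr O}_{F}^{\prime}}) \otimes \Z_{(2)} = 0$ for $* < 0$ follows by the same twofold (2-complete and 2-inverted) strategy: Morel's connectivity theorem gives $\pi_{*,0}(\1_k)_2^\comp = 0$ at every corner of the square, so the $2$-completed pullback vanishes, and the real-étale side has no negative stems.

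I expect the main obstacle to be the $2$-inverted half of the argument. The $2$-adic part is essentially a formal consequence of Theorem \ref{thm:main-1} combined with Morel over fields and Schlichting over the base, but pinning down $\pi_{0,0}(\1_{{\scr O}_{F}^{\prime}})[1/2]$ requires unwinding the real-étale description of the $\rho$-periodic sphere and its relation to Witt theory via the fundamental fibre sequence of \cite[Corollary 1.2]{bachmann-etaZ}, and then checking that the unit map realizes the standard rank-plus-signatures identification of $\GW({\scr O}_{F}^{\prime})[1/2]$.
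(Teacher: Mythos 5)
Your overall architecture matches the paper's: identify the $2$-completed part via the arithmetic square of Theorem \ref{thm:main-1} together with Morel's theorem at the corner fields, handle the complementary (rational) part via the real-étale/rank-and-signature description, and glue with the fracture square; the same strategy also gives the negative-stem vanishing. Your variant of comparing the two spectrum-level pullback squares for $\1$ and $\KO$ via the unit map is a reasonable repackaging of the paper's more hands-on step, where the square of Grothendieck--Witt rings is written down explicitly and matched with the abstract pullback using uniqueness of $\Z[({\scr O}_{F}^{\prime})^\times]$-algebra maps into quotients of $\Z[({\scr O}_{F}^{\prime})^\times]$.

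However, there is a genuine gap at the step where you pass from the cartesian square of \emph{spectra} to a cartesian square of \emph{abelian groups} on $\pi_{0,0}$. A homotopy pullback of spectra only yields a Mayer--Vietoris long exact sequence
\[
\pi_{1,0}(\mathcal{E}(\R))^{\oplus r}\oplus\bigoplus_i\pi_{1,0}(\mathcal{E}(y_i)) \to \pi_{1,0}(\mathcal{E}(\C))^{\oplus r+c} \to \pi_{0,0}(\mathcal{E}({\scr O}_{F}^{\prime})) \to \cdots,
\]
so $\pi_{0,0}$ of the total spectrum is an extension involving the cokernel of the first map; your assertion that one gets ``a morphism of cartesian squares of abelian groups on $\pi_{0,0}$'' requires that this first map be surjective. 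For $\mathcal{E}=\1$ this is exactly the non-formal input the paper supplies: by Levine's theorem $\pi_{*,0}((\1_\C)_2^\comp)\cong(\pi_*^s)_2^\comp$, combined with the initiality of finite spectra, the map from the $\R$- and $\F_q$-corners to the $\C$-corners is surjective on all homotopy groups. Without this, neither the identification of $\pi_{0,0}(\1_2^\comp)({\scr O}_{F}^{\prime})$ with the pullback of Grothendieck--Witt rings nor the vanishing of $\pi_{-1,0}$ follows. The same issue recurs in the fracture-square step: to read off $\pi_{0,0}\otimes\Z_{(2)}$ from the arithmetic square one needs $\pi_{1,0}(\1_2^\comp)({\scr O}_{F}^{\prime})\otimes\Q=0$, which the paper deduces from the corner-wise vanishing results of Dugger--Isaksen over $\R$, Levine over $\C$, and R\"ondigs--Spitzweck--{\O}stv{\ae}r/Wilson over finite fields --- again via the surjectivity above. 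You should add these surjectivity and $\pi_{1,0}$-vanishing inputs explicitly; they are the substantive content of the deduction, not bookkeeping.
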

\begin{proof}
The presentation of Grothendieck-Witt rings of fields of characteristic $\neq 2$ by generators and relations given 
in \cite[Theorem 4.1]{lam-quadratic-forms} implies there are $\Z[({\scr O}_{F}^{\prime})^\times]$-algebra isomorphisms 
\[
\GW(\R) \wequi \Z \oplus \Z\{\lra{-1}\}, 
\GW(\C) \wequi \Z, 
\GW(\F_q) \wequi \Z \oplus \Z/2
\]
In the isomorphism for $\GW(\F_q)$, 
the right-hand side has trivial multiplication on the square class group $\F_q^{\times}/(\F_q^{\times})^{2}\wequi \Z/2$.
As such, 
every $n$-dimensional form in $\GW(\F_q)$ can be written as either 
$n\langle 1\rangle$ or $(n-1)\langle 1\rangle\oplus \langle a\rangle$, 
where $a$ is a non-square element in $\F_q^{\times}$ (we may choose $a=-1$ if and only if $q\equiv 3\bmod 4$).
Moreover, 
by \cite[Proposition 2.1(7)]{berrick2011hermitian} and the proof of \cite[Theorem 5.8]{bachmann-euler} one deduces 
the $\Z[({\scr O}_{F}^{\prime})^\times]$-algebra isomorphism
\[
\GW({\scr O}_{F}^{\prime}) 
\wequi 
\Z^{1+r} \oplus (\Z/2)^{1+c}
\]
Thus, 
for the closed points $x, y_1, \dots, y_{c} \in \Spec({\scr O}_{F}^{\prime})$ in the notation of Theorem \ref{thm:main-1}, 
there is a pullback square of $\Z[({\scr O}_{F}^{\prime})^\times]$-algebras
\begin{equation}
\label{equation:GWsquare}
\begin{CD}
\GW({\scr O}_{F}^{\prime}) @>>> \bigoplus^{r} \GW(\R) \oplus \bigoplus^{c} \GW(y_i) \\
@VVV              @VVV    \\
\GW(x)   @>>> \bigoplus^{r+c} \GW(\C)
\end{CD}
\end{equation}
The Grothendieck-Witt rings appearing in \eqref{equation:GWsquare} are quotients of $\Z[({\scr O}_{F}^{\prime})^\times]$.
Thus the maps in \eqref{equation:GWsquare} are unique as $\Z[({\scr O}_{F}^{\prime})^\times]$-algebra maps.
Since $2$-adic completion is exact on finitely generated abelian groups, 
this square remains cartesian after $2$-adic completion.

Consider the long exact sequence of homotopy groups associated with the pullback square
\begin{equation}
\label{equation:2completedsquare1}
\begin{CD}
\map(\1_{{\scr O}_{F}^{\prime}},\1_2^\comp) @>>> \map(\bigoplus^{r}\1_{\R} \oplus \bigoplus^{c} \1_{y_{i}}, \1_2^\comp) \\
@VVV                                  @VVaV \\
\map(\1_{x}, \1_2^\comp) @>>> \map(\bigoplus^{r+c}\1_{\C}, \1_2^\comp) \\
\end{CD}
\end{equation}
We have $\pi_{\ast,0}(\1_2^\comp)(\C) \wequi (\pi_*^s)_2^\comp$ by \cite[Corollary 2]{levine2014comparison}. 
It follows that the right vertical map in \eqref{equation:2completedsquare1} is surjective on homotopy groups.
Indeed, 
recall that $\SH^{fin}$ is the initial stable symmetric monoidal $\infty$-category according to \cite[Theorem 3.1]{BGT}.
Thus for any symmetric monoidal stable $\infty$-category $\scr C$ and symmetric monoidal functor $F: \scr C \to \SH(\C)_2^\comp$, 
there exists a factorization 
\[
(\pi_*^s)_2^\comp \to \pi_*(c_2^\comp) \xrightarrow{F} \pi_{*,0}((\1_\C)_2^\comp)
\]
and the composite is surjective by Levine's result. 
Thus using \cite[Corollary 6.43]{A1-alg-top} we deduce the pullback square of rings
\begin{equation}
\label{equation:2completedsquare2}
\begin{CD}
\pi_{0,0}(\1_2^\comp)({\scr O}_{F}^{\prime}) @>>> \bigoplus^{r}\GW(\R)_2^\comp \oplus \bigoplus^{c} \GW(y_{i})_2^\comp \\
@VVV                               @VVV         \\
\GW(x)_2^\comp @>>> \bigoplus^{r+c} \GW(\C)_2^\comp
\end{CD}
\end{equation}
Note that \eqref{equation:2completedsquare2} comes from a diagram in $\Premot_{{\scr O}_{F}^{\prime}}$.
Hence the maps in \eqref{equation:2completedsquare2} are $\pi_{0,0}(\1_2^\comp)({\scr O}_{F}^{\prime})$-algebra maps, 
so a fortiori $\Z[({\scr O}_{F}^{\prime})^\times]$-algebra maps.
The Grothendieck-Witt rings in \eqref{equation:2completedsquare2} are quotients of 
$\Z[({\scr O}_{F}^{\prime})^\times]_2^\comp$; 
thus the lower horizontal and right-hand vertical maps in \eqref{equation:2completedsquare2} are unique 
$\Z[({\scr O}_{F}^{\prime})^\times]$-algebra maps.
Thus \eqref{equation:2completedsquare2} is the $2$-adic completion of \eqref{equation:2completedsquare1} and there is an 
isomorphism of $\Z[({\scr O}_{F}^{\prime})^\times]$-algebras
\[ 
\pi_{0,0}(\1_2^\comp)({\scr O}_{F}^{\prime}) \wequi \GW({\scr O}_{F}^{\prime})_2^\comp
\] 

There is a similar pullback square for $\pi_{1,0}(\ph) \otimes \Q$. 
Since the vanishing $\pi_{1,0}(\1_2^\comp)(k) \otimes \Q = 0$ holds for $k = \R$ \cite[Figure 4]{dugger2016low}, 
$k=\C$ \cite[Corollary 2]{levine2014comparison}, 
and $k=\F_q$ \cite[Theorem 1.3]{wilson2016motivic}, 
we deduce the vanishing 
\[
\pi_{1,0}(\1_2^\comp)({\scr O}_{F}^{\prime}) \otimes \Q = 0
\]
Inserted into the fracture square long exact sequence we get a pullback square of 
$\Z[({\scr O}_{F}^{\prime})^\times]$-algebras
\begin{equation}
\label{equation:pullbackAalgebras}
\begin{CD}
\pi_{0,0}(\1)({\scr O}_{F}^{\prime}) \otimes \Z_{(2)} @>>> \pi_{0,0}(\1)({\scr O}_{F}^{\prime}) \otimes \Q \\
@VVV                                          @VVV                     \\
\pi_{0,0}(\1_2^\comp)({\scr O}_{F}^{\prime})          @>>> \pi_{0,0}(\1_2^\comp)({\scr O}_{F}^{\prime}) \otimes \Q \\
@|                                            @|                       \\
\GW({\scr O}_{F}^{\prime})_2^\comp                @.   \GW({\scr O}_{F}^{\prime})_2^\comp \otimes \Q
\end{CD}
\end{equation}
By inspection there are isomorphisms of $\Z[({\scr O}_{F}^{\prime})^\times]$-algebras
\begin{align*} 
\pi_{0,0}(\1)({\scr O}_{F}^{\prime}) \otimes \Q &
\wequi H^0_\ret({\scr O}_{F}^{\prime},\Q) \times H^0({\scr O}_{F}^{\prime},\Q) \\ 
&\wequi \Q^{r} \times \Q \\ &\wequi \GW({\scr O}_{F}^{\prime}) \otimes \Q
\end{align*} 
We refer to \cite[Theorem 7.2]{bachmann-SHet} for a proof of the first isomorphism.
Since $\pi_{0,0}(\1)({\scr O}_{F}^{\prime}) \otimes \Q$ is a quotient of $\Z[({\scr O}_{F}^{\prime})^\times] \otimes \Q$, 
in \eqref{equation:pullbackAalgebras}, 
the right-hand vertical map 
\[
\pi_{0,0}(\1)({\scr O}_{F}^{\prime}) \otimes \Q \wequi \GW({\scr O}_{F}^{\prime}) \otimes \Q 
\to 
\pi_{0,0}(\1_2^\comp)({\scr O}_{F}^{\prime}) \otimes \Q \wequi \GW({\scr O}_{F}^{\prime})_2^\comp \otimes \Q
\]
is the unique $\Z[({\scr O}_{F}^{\prime})^\times]$-algebra map.
This shows we can identify the square of $\Z[({\scr O}_{F}^{\prime})^\times]$-algebras \eqref{equation:pullbackAalgebras} 
with the corresponding fracture square for $\GW({\scr O}_{F}^{\prime})\otimes \Z_{(2)}$.
It also follows that the unit map to $\KO$ induces an isomorphism, 
since $\pi_{0,0}(\KO_{\scr O_F}) = \GW(\scr O_F')$ is a quotient of $\Z[\scr O_F'^\times]$.

Next we show the vanishing $\pi_{*,0}(\1_{\scr O_F'}) \otimes \Z_{(2)} = 0$ for $*<0$.
From \eqref{equation:2completedsquare1}, 
since $a$ is surjective on $\pi_*$ and all terms except possibly the top left vanish on $\pi_*$ for $*<0$, 
we deduce that $\pi_{*,0}((\1_{\scr O_F'})_2^\comp) = 0$ for $*<0$.
Since $\GW(\scr O_F')$ is finitely generated, 
the map 
\[ 
\GW(\scr O_F') \otimes \Q \times \GW(\scr O_F')_2^\comp \to \GW(\scr O_F')_2^\comp \otimes \Q 
\] 
is surjective.
Considering the fracture square for $\pi_{*,0}(\1_{\scr O_F'}) \otimes \Z_{(2)}$ it thus remains to prove
$\pi_{*,0}(\1_{\scr O_F'}) \otimes \Q = 0$ for $*<0$.
This follows from the identification of these groups with subquotients of the rational gamma filtration and rational 
real étale cohomology, 
both of which vanish in these degrees, as above.
\end{proof}

Applying the same proof method establishes the following odd-primary analog of Theorem \ref{theorem:general2mainthmintro}.

\begin{theorem}  \tom{think about this again before submission}
\label{theorem:generalellmainthmintro}
Let $\ell$ be an odd prime number.
Suppose $F$ is $\ell$-regular and $\mu_{\ell}\subset F$. 
For the endomorphism ring of $\1_{{\scr O}_{F}^{\prime}}$ over the base scheme ${\scr O}_{F}^{\prime}:= \scr O_F[1/\ell]$ 
there is an isomorphism of $\Z[({\scr O}_{F}^{\prime})^\times]$-algebras
\[ 
\pi_{0,0}(\1_{{\scr O}_{F}^{\prime}}) \otimes \Z_{(\ell)} 
\wequi 
\GW({\scr O}_{F}^{\prime}) \otimes \Z_{(\ell)}.
\]
Moreover, 
we have the vanishing result
\[ 
\pi_{*,0}(\1_{\scr O_F'}) \otimes \Z_{(\ell)} 
= 
0 
\quad\text{for}\quad *<0
\]
The same results hold for the motivic sphere over the base scheme $\mathbb{Z}[1/\ell]$ when $\ell$ is a regular prime.
\end{theorem}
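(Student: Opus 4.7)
The plan is to run the proof of Theorem \ref{theorem:general2mainthmintro} mutatis mutandis, substituting the cartesian square from Theorem \ref{thm:main-2} (respectively Theorem \ref{thm:main-3} for the $\Z[1/\ell]$ statement) in place of the one from Theorem \ref{thm:main-1}. First I would identify all relevant Grothendieck--Witt rings after tensoring with $\Z_{(\ell)}$. Since $\ell$ is odd, $2$ is invertible in $\Z_{(\ell)}$, which annihilates the $2$-torsion in $\GW$ coming from $I/I^2$; concretely, $\GW(\F_q) \otimes \Z_{(\ell)} \wequi \Z_{(\ell)} \wequi \GW(\C) \otimes \Z_{(\ell)}$ via the rank map, while $\GW(\R) \otimes \Z_{(\ell)} \wequi \Z_{(\ell)}^{2}$ via rank and signature (relevant only for $\Z[1/\ell]$). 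For the base, the assumption $\mu_\ell \subset F$ with $\ell$ odd forces $F$ totally imaginary, so $\GW(\scr O_F') \otimes \Z_{(\ell)} \wequi \Z_{(\ell)}$, whereas $\GW(\Z[1/\ell]) \otimes \Z_{(\ell)} \wequi \Z_{(\ell)}^{2}$. In each case, one checks directly that the resulting square of $\GW$-rings is a pullback of $\Z[({\scr O}_{F}^{\prime})^\times]$-algebras.

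Next I would apply $\pi_{0,0}((\ph)_\ell^\comp)$ to the cartesian square of motivic categories, producing the analog of \eqref{equation:2completedsquare1}. Using Levine's identification $\pi_{*,0}(\1_\C)_\ell^\comp \wequi (\pi_*^s)_\ell^\comp$ --- together with Bachmann's real étale description of $\1[1/2]$ for the $\Z[1/\ell]$ case --- the argument from Theorem \ref{theorem:general2mainthmintro} goes through to show the analog of the map $a$ is surjective on homotopy groups, yielding a pullback square of rings on $\pi_{0,0}$. Morel's theorem identifies each term with the corresponding $\GW(\ph)_\ell^\comp$, and the diagram maps are $\Z[({\scr O}_{F}^{\prime})^\times]$-algebra maps by the $\Premot_{{\scr O}_{F}^{\prime}}$-structure. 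Since each $\GW$-term is a quotient of $\Z[({\scr O}_{F}^{\prime})^\times]_\ell^\comp$, uniqueness of $\Z[({\scr O}_{F}^{\prime})^\times]$-algebra maps pins down the horizontal and vertical arrows, identifying the $\pi_{0,0}$-square with the $\ell$-adic completion of the $\GW$-square. This gives $\pi_{0,0}(\1_{{\scr O}_{F}^{\prime}})_\ell^\comp \wequi \GW({\scr O}_{F}^{\prime})_\ell^\comp$ as $\Z[({\scr O}_{F}^{\prime})^\times]$-algebras.

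Finally I would assemble the fracture square as in Theorem \ref{theorem:general2mainthmintro}. The rational part $\pi_{0,0}(\1_{{\scr O}_{F}^{\prime}}) \otimes \Q$ is identified via \cite[Theorem 7.2]{bachmann-SHet} with $H^0_\ret({\scr O}_{F}^{\prime}, \Q) \times H^0({\scr O}_{F}^{\prime}, \Q)$; this gives $\Q$ in the totally imaginary case and $\Q \oplus \Q$ for $\Z[1/\ell]$, matching $\GW({\scr O}_{F}^{\prime}) \otimes \Q$. Since $\GW$ is a quotient of the group ring, the map to the $\ell$-complete side is the unique $\Z[({\scr O}_{F}^{\prime})^\times]$-algebra map, and the fracture square of $\pi_{0,0}(\1) \otimes \Z_{(\ell)}$ coincides with that of $\GW \otimes \Z_{(\ell)}$; the needed vanishing $\pi_{1,0}(\1_\ell^\comp)(k) \otimes \Q = 0$ over the residue fields follows from \cite[Corollary 2]{levine2014comparison} and \cite[Theorem 1.3]{wilson2016motivic}. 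The vanishing $\pi_{*,0}(\1_{\scr O_F'}) \otimes \Z_{(\ell)} = 0$ for $* < 0$ is immediate from the same input: surjectivity of (the analog of) $a$ forces vanishing of $\pi_{*,0}(\1_{{\scr O}_{F}^{\prime}})_\ell^\comp$ using vanishing over $\C, \R, \F_q$, and the rational vanishing comes from the real étale and rank descriptions. The main obstacle, as in the $2$-primary case, is verifying surjectivity of this map --- everything after that reduces to bookkeeping with $\Z[({\scr O}_{F}^{\prime})^\times]$-algebra structures, which is in fact simpler here because $\GW \otimes \Z_{(\ell)}$ is detected by rank (and signature in the $\Z[1/\ell]$ case) alone.
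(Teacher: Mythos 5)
Your proposal follows exactly the route the paper takes: the paper gives no separate proof of this theorem, stating only that it follows by ``applying the same proof method'' as Theorem \ref{theorem:general2mainthmintro}, and that is precisely what you carry out, with Theorems \ref{thm:main-2} and \ref{thm:main-3} supplying the relevant pushout squares in place of Theorem \ref{thm:main-1}. Your added bookkeeping --- that $\ell$-localization for odd $\ell$ kills the $2$-torsion in all the Grothendieck--Witt rings so everything is detected by rank (and signature), and that $\mu_\ell \subset F$ forces $F$ to be totally imaginary --- is correct and fills in exactly the details the paper leaves implicit.
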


\bibliographystyle{alpha}
\bibliography{topmod}

\end{document}